\documentclass[12pt]{amsart}
\usepackage{epsfig}
\usepackage{comment}
\usepackage{mathtools}
\usepackage[subnum]{cases}
\usepackage{enumerate}
\usepackage{bbm,bm}
\usepackage{amsmath,amssymb,mathrsfs}
\usepackage{color}
\usepackage[bookmarks=true,
bookmarksnumbered=true, breaklinks=true,
pdfstartview=FitH, hyperfigures=false,
plainpages=false, naturalnames=true,
colorlinks=true,pagebackref=true,
pdfpagelabels,colorlinks=true,
	linkcolor=blue,
	citecolor=black,
	filecolor=black,
	urlcolor=black]{hyperref}
\usepackage{wasysym}

\usepackage{pgfplots}
\pgfplotsset{width=10cm,compat=1.9}
\usepgfplotslibrary{fillbetween}

\usepgfplotslibrary{external}
\newtheorem{theorem}{Theorem}[section]
\newtheorem{definition}[theorem]{Definition}
\newtheorem{lemma}[theorem]{Lemma}
\newtheorem{corollary}[theorem]{Corollary}
\newtheorem{remark}[theorem]{Remark}

\newtheorem{proposition}[theorem]{Proposition}

\newcommand{\R}{\mathbb{R}} 
\DeclareMathOperator{\LC}{LC} 
\newcommand{\Sp}{\mathbb{S}^{n-1}}

\DeclareMathOperator{\dom}{dom}

\DeclareMathOperator{\vol}{vol}

\DeclareMathOperator{\lev}{lev}

\DeclareMathOperator{\supp}{supp}
\DeclareMathOperator{\ent}{Ent}

\title[Blaschke operations on log-concave functions]{Blaschke operations on log-concave functions \\and affine isoperimetric inequalities}

\author{Effrosyni Chasioti and Steven Hoehner}
\date{\today}

\begin{document}

\setcounter{footnote}{0}
\maketitle

\begin{abstract}\noindent
We introduce  Blaschke addition and homothety operations on log-concave functions and study their affine-geometric consequences. Our starting point is the first variation formula of Falah and Rotem ({\it Calc. Var. and PDE}, 2026), which associates to each log-concave function a pair of surface area measures. Using the additivity of these measures, we define a canonical Blaschke sum and Blaschke homothety on the class of log-concave functions, uniquely determined up to translation. We establish the basic algebraic properties of these operations, define the associated Blaschke symmetral, and show that this symmetrization preserves both total mass and the first quermassintegral. We also prove that successive  Blaschke symmetrizations converge, after translations, to a radially symmetric log-concave function, which we call the mean Blaschke symmetral.

We then relate the canonical theory to projection-type constructions. In particular, we show that the functional projection body arising from the first variation coincides with the projection body of the asymmetric LYZ body, and we derive corresponding intertwining properties. As applications, we prove concavity of the entropy with respect to the canonical Blaschke sum and obtain associated Kneser--S\"uss-type inequalities. 

We also study a functional version of affine surface area, and prove affine isoperimetric inequalities for log-concave functions. In particular, we obtain a Blaschke-concavity property for the affine surface area and show that it is maximized, under fixed first quermassintegral, by radially symmetric  functions.
\end{abstract}

\renewcommand{\thefootnote}{}
\footnotetext{2020 \emph{Mathematics Subject Classification}: 52A40 (52A39, 52A41)}

\footnotetext{\emph{Key words and phrases}: Affine surface area, Blaschke sum, entropy, log-concave function, mixed volume, projection body, quermassintegral}
\renewcommand{\thefootnote}{\arabic{footnote}}
\setcounter{footnote}{0}

\tableofcontents

\section {Introduction and main results}

Blaschke addition is one of the fundamental operations in the Brunn--Minkowski theory. For convex bodies, it is defined by addition of surface area measures and plays a central role in the study of affine isoperimetric inequalities, mixed volumes, and projection bodies. In contrast with Minkowski addition, Blaschke addition is nonlinear at the level of support functions and is not monotone with respect to set inclusion. Nevertheless, it enjoys remarkable geometric structure. Its associated symmetrization procedure, the Blaschke symmetral, preserves surface area and interacts naturally with many affine-geometric extremal problems; see, for example,  \cite{BucurFragalaLamboley,Lutwak-1991,segal2014}.

Over the last several years, many classical constructions from convex geometry have been extended from convex bodies to log-concave functions. This functional viewpoint has led to powerful analogues of mixed volumes, projection bodies, affine surface area, and geometric inequalities, see, for example,  \cite{BCF-2014,Caglar-et-al-2016,Milman-Rotem-alpha,Milman-Rotem,Rotem2012,Rotem2013,Rotem-2022}. A recent advance in this direction is the first variation formula of Falah and Rotem \cite{Falah-Rotem} (stated in Theorem \ref{thm:rotem-falah-variation} below) which associates to each log-concave function $f=e^{-\varphi}$ a pair of measures $(\mu_f,\nu_f)$ that serve as functional surface area measures and characterize the first variation of the total mass. Combined with their solution of the corresponding functional Minkowski problem in \cite{Falah-Rotem} (stated in Theorem \ref{thm:rotem-falah-main} below), this provides a natural framework in which one may ask whether Blaschke-type operations can be defined directly on log-concave functions.

The first goal of this paper is to show that the answer to this question is affirmative. Using the additivity of the surface area measures $(\mu_f,\nu_f)$, we define a canonical  Blaschke sum and Blaschke homothety on the class $\LC_n$ of coercive log-concave functions by requiring additivity of the surface area measures. The functional Minkowski problem implies that the resulting function exists and is unique up to translation. This gives a direct analogue of classical Blaschke addition in the log-concave setting.

Our first main result establishes the basic structure of these operations. We prove that the functional Blaschke sum is commutative and associative up to translation, that the corresponding homothety has the expected linearity properties, and that the first variation is additive in the Blaschke variable. We then define the functional Blaschke symmetral of a log-concave function  $f\in\LC_n$ about the hyperplane $H=u^\perp$, $u\in\Sp$, by
\[
B_u^\sharp f := \left(\tfrac12 \odot f\right)\sharp \left(\tfrac12 \odot R_u f\right),
\]
and show that it is reflection-invariant up to translation, idempotent up to translation, and invariant on $H$-symmetric functions. Moreover, it preserves both total mass and the first quermassintegral $W_1$. Thus the functional symmetrization behaves, in the log-concave setting, much like the classical Blaschke symmetral behaves for convex bodies.

A second theme of the paper is the interaction between the canonical Blaschke theory and functional projection body constructions. For a log-concave function $f$, the first variation against the indicator of a segment defines a natural support function, and hence an origin-symmetric convex body. We show that this body is precisely the projection body of the asymmetric LYZ body associated with $f$. This leads to a function-valued projection body operator and a corresponding polar projection body function. We show that these constructions intertwine naturally with the canonical Blaschke operations, and provide the correct functional analogue of the classical relation between Blaschke addition and projection bodies.

Our next main result concerns convergence of successive functional  Blaschke symmetrizations. We prove that every log-concave function can be symmetrized along a suitable sequence of directions so that, after translations, the resulting sequence hypo-converges to a radially symmetric log-concave function. We call this limit the \emph{mean Blaschke symmetral} of $f$ and denote it by $f^\sharp$. This result provides the canonical radial representative associated with the Blaschke symmetrization process, and is the key tool for the affine isoperimetric applications developed later in the paper.

The functional Blaschke sum also has several analytic consequences. By combining the functional Minkowski first inequality with the Blaschke additivity of the first variation, we prove that the entropy is concave with respect to Blaschke convex combinations. This yields functional and geometric Kneser--S\"uss-type inequalities and shows that the canonical Blaschke structure carries nontrivial information, even though the total mass itself turns out to be additive under $\sharp$. We also show that the functional Blaschke sum is not monotone, which explains why one cannot define a satisfactory Blaschke addition on log-concave functions by applying the classical body-valued operation directly to superlevel sets.

Inspired by the extended affine surface area studied by Lutwak \cite{Lutwak-1991}, we study a functional affine surface area $\Omega_\sharp(f)$ defined by an infimum over convex test bodies through the first variation. We show that $\Omega_\sharp$ is affinely covariant, upper semicontinuous with respect to cosmic convergence of the surface area measures, and homogeneous of degree $\frac{n}{n+1}$. Furthermore, we prove that $\Omega_\sharp$ is $\frac{n+1}{n}$-concave with respect to the canonical Blaschke addition. The mean Blaschke symmetral then yields an affine isoperimetric comparison principle:
\[
\Omega_\sharp(f)\leq \Omega_\sharp(f^\sharp),
\]
with equality for radially symmetric  functions. Since $f^\sharp$ is radially symmetric and preserves the first quermassintegral $W_1$, we obtain a sharp affine isoperimetric inequality for log-concave functions. We also introduce the associated geominimal surface area, and prove the corresponding comparison inequality in this setting.

The paper is organized as follows. In Section \ref{sec:Blaschke-sum}, we introduce the canonical Blaschke sum and Blaschke symmetrization on log-concave functions, establish their basic properties, relate them to the asymmetric LYZ body, prove convergence of successive canonical symmetrizations, and derive new entropy inequalities. Section \ref{sec:affine-surface-area} develops the functional affine surface area and proves the corresponding affine isoperimetric inequalities. 
Finally, the Appendix contains the proofs of some of our technical results, including the convergence theorem for canonical Blaschke symmetrizations and the computation of the affine surface area of the mean Blaschke symmetral.


\subsection{Background and notation}

We write $\mathcal K^n$ for the class of convex bodies in $\R^n$, that is, convex compact sets with nonempty interior. We let $\mathcal K^n_o$  be  the subclass of bodies containing the origin in
their interiors. A convex body $K\in\mathcal{K}_o^n$ is \emph{centrally symmetric} if for all $x\in K$, we have $-x\in K$, and we let $\mathcal{K}_{\rm cs}^n$ denote the class of  centrally symmetric convex bodies. The Euclidean unit ball is denoted by $B_2^n$, its boundary sphere by $\Sp$, and $\omega_n := \vol_n(B_2^n)$. If $K \in \mathcal K^n$, then $h_K$ denotes its support
function,
\[
h_K(x)=\sup_{y\in K}\langle x,y\rangle,\qquad x\in \R^n.
\]
For $K\in\mathcal{K}_o^n$, we let $K^\circ$ denote the polar body
\[
K^\circ=\{x\in \R^n:\langle x,y\rangle\leq 1 \text{ for all } y\in K\}.
\]

We write $S_K$ for the surface area measure of $K$ on $\Sp$ and $S(K)=S_K(\Sp)$ for the surface area of $K$. For convex bodies $K,L\in\mathcal K^n$, their \emph{Blaschke sum} $K\#L$ is the convex body, unique up to translation, characterized by
\[
S_{K\#L}=S_K+S_L.
\]
For $\lambda>0$, the \emph{Blaschke scalar multiplication} is given by
\[
\lambda\cdot K=\lambda^{\frac1{n-1}}K,
\]
so that $S_{\lambda\cdot K}=\lambda S_K$. Blaschke addition is commutative and associative. However, unlike Minkowski addition, it is not monotone with respect to set inclusion. The \emph{Kneser--S\"uss inequality} states that the volume functional is $\frac{n-1}{n}$-concave with respect to the Blaschke operations, that is, for all $K,L\in\mathcal{K}^n$ and all $\lambda\in[0,1]$, we have
\begin{equation}\label{eqn:classical-KS-ineq}
    \vol_n(\lambda\cdot K\#(1-\lambda)\cdot L)^{\frac{n-1}{n}} \geq \lambda\vol_n(K)^{\frac{n-1}{n}}+(1-\lambda)\vol_n(L)^{\frac{n-1}{n}}.
\end{equation}
Equality holds if and only if $K$ and $L$ are homothetic, see, e.g., \cite[Theorem 8.2.3]{SchneiderBook}. 

Given $K\in\mathcal{K}^n$ and $u\in\Sp$, the \emph{classical Blaschke symmetral} of $K$ with respect to the hyperplane $u^\perp$ is
\[
B_u K = \left(\tfrac{1}{2}\cdot K\right)\#\left(\tfrac{1}{2}\cdot R_u K\right)
\]
where $R_u K$ is the reflection of $K$ about $u^\perp$. The classical Blaschke symmetral preserves surface area, is idempotent, quermassintegral-nondecreasing, and invariant on $u^\perp$-symmetric bodies and under reflections about $u^\perp$. Moreover, any convex body $K\in\mathcal{K}^n$ can be transformed into a Euclidean ball with the same surface area as $K$ using a convergent sequence of Blaschke symmetrizations (see, e.g., \cite{segal2014}). For more background on Blaschke operations, we refer the reader to, e.g., 
 \cite{symm-in-geom,gardner2006geometric,Gardner-Hug-Weil,gardner-et-al-Blaschke-2014,SchneiderBook}.

Let $K\in\mathcal{K}^n$. The \emph{projection body} $\Pi K$ of $K$ is the centrally symmetric convex set defined by 
\[
h_{\Pi K}(u)=\vol_{n-1}(K|u^\perp),\quad u\in\Sp,
\]
where $K|u^\perp$ is the projection of $K$ onto the hyperplane $u^\perp$. By Cauchy's surface area formula,
\[
h_{\Pi K}(u)=\frac{1}{2}\int_{\Sp}|\langle u,v\rangle|\,dS_K(v),\quad u\in\Sp.
\]
Note that $\Pi K\in\mathcal{K}_{\rm cs}$. The \emph{polar projection body} of $K$ is $\Pi^\circ K:=(\Pi K)^\circ$. The projection body operator and Blaschke sum are related by the following property (see, e.g., \cite[p. 83]{gardner2006geometric}):
\begin{equation}\label{eqn:projection-Blaschke-relation-classical}
    \forall K,L\in\mathcal{K}_{\rm cs}^n,\quad \Pi(K\# L)=\Pi K+\Pi L.
\end{equation}

Given $K\in\mathcal{K}^n$ and $u\in\Sp$, the \emph{Minkowski symmetral} of $K$ is defined by $\tau_u K=\tfrac{1}{2}K+\tfrac{1}{2}R_u K$. Note that by \eqref{eqn:projection-Blaschke-relation-classical}, the Blaschke and Minkowski symmetrals are related via the projection body operator as follows (see, e.g., \cite[Equation (1.7)]{segal2014}). For any $K,L\in\mathcal{K}_{\rm cs}^n$, we have
\begin{align*}
    \tau_u(\Pi K)&=\tfrac{1}{2}\Pi K+\tfrac{1}{2}R_u(\Pi K)=\tfrac{1}{2}\Pi K+\tfrac{1}{2}\Pi(R_u K)=\tfrac{1}{2}(\Pi K+\Pi (R_u K))\\
    &=\tfrac{1}{2}\Pi(K\# R_u K)=\Pi\left(\tfrac{1}{2}\cdot(K\# R_u K)\right)=\Pi(B_u K).
\end{align*}

For a function $f:\R^n\to\R$ and $u\in\Sp$, the \emph{reflection function} $R_u f$ of $f$ about the hyperplane $u^\perp$ is defined by $R_u f:=f\circ R_u$, that is,
\begin{equation}
    R_u f(x)=f(R_u x)
\end{equation}
for $x\in\R^n$, where $R_u x=x-2\langle x,u\rangle u$ is the reflection of $x$ about $u^\perp$. We say that $f$ is \emph{even} if $f(x)=f(-x)$ for all $x$, and we say that $f$ is \emph{radial}, or \emph{radially symmetric}, if it is
invariant under all reflections, that is, 
\[
\forall u\in\Sp,\quad f=R_u f.
\]
Equivalently, a radial function depends only on $|x|$.

For a nonnegative function $f:\R^n\to[0,\infty)$, we write
\[
\supp(f)=\overline{\{x\in\R^n:f(x)>0\}}
\]
for its \emph{support}, and for $t>0$ we define its \emph{superlevel set} by
\[
\lev_{\geq t} f := \{x\in\R^n:f(x)\geq t\}.
\]
For measurable $f:\R^n\to[0,\infty)$, the \emph{total mass} of $f$ is denoted by $J(f)=\int_{\R^n}f(x)\,dx$, provided the integral exists. In this case, the total mass can be expressed via the \emph{layer-cake formula}
\[
J(f)=\int_{\R^n} f(x)\,dx=\int_0^\infty \vol_n(\lev_{\geq t} f)\,dt.
\]


Let $\psi:\R^n\to(-\infty,+\infty]$ be a convex function. The \emph{domain} of $\psi$ is $\dom(\psi)=\{x\in\R^n:\, \psi(x)<\infty\}$. We say that  $\psi$ is \emph{proper} if $\psi\not\equiv+\infty$, i.e., if $\dom(\psi)\neq\varnothing$, and we say that $\psi$ is \emph{coercive} if $\psi(x)\to\infty$ as $|x|\to\infty$. Let $\mathrm{Conv}(\R^n)$ denote the class of all convex functions $\psi:\R^n\to(-\infty,+\infty]$ which are proper, coercive, lower semicontinuous, and satisfy $\operatorname{int}(\dom(\psi))\neq\varnothing$.

Given a convex function $\psi\in\mathrm{Conv}(\R^n)$, let
$\mathcal{L}\psi(x)=\sup_{y\in\R^n} (\langle x,y \rangle - \psi(y))$,  $x\in\R^n$, denote the \textit{Legendre--Fenchel transform} of $\psi$. By \cite[Theorem 1.6.13]{RockafellarBook}, since  $\psi\colon \R^n\to (-\infty,\infty]$ is a proper, lower semicontinuous, convex function, the Legendre--Fenchel transform $\mathcal{L}\psi$ is a proper, lower semicontinuous, convex function on $\R^n$. Moreover, it is an involution, i.e., $\mathcal{L}\mathcal{L} \psi = \psi$, and it is order-reversing, i.e., if $\psi\leq\varphi$, then $\mathcal{L}\psi\geq\mathcal{L}\varphi$. Note that by \cite[Theorem 11.8]{Rockafellar-Wets}, a proper, lower semicontinuous, convex function $\psi: \R^n\to (-\infty,\infty]$ is coercive if and only if $o\in \operatorname{int}(\dom(\mathcal{L} \psi))$.

A function $f:\R^n\to[0,\infty)$ is \emph{logarithmically concave} (or \emph{log-concave}) if $\log f$ is concave. Every upper semicontinuous log-concave function may be written as $f=e^{-\phi}$, where $\phi:\R^n\to(-\infty,+\infty]$ is convex. In this paper, we will work with the class $\LC_n=\{f=e^{-\varphi}:\,\varphi\in\mathrm{Conv}(\R^n)\}$ of coercive log-concave functions on $\R^n$, and the subclass $\LC_n^e=\{f\in\LC_n:\,f\text{ is even}\}$ of even log-concave functions. Note that every $f\in \LC_n$ is integrable (see, e.g., \cite{cordero-erasquin-klartag}), proper, and upper semicontinuous. 

The indicator function of $K\in\mathcal{K}^n$ is defined, for $x\in\R^n$, by
\[
\mathbbm{1}_K(x)=\begin{cases}
    1&\quad\text{if }x\in K,\\
    0&\quad\text{if }x\not\in K.
\end{cases}
\]
In particular, if $K\in\mathcal{K}^n$, then $\mathbbm{1}_K\in\LC_n$ and $J(\mathbbm{1}_K)=\vol_n(K)$. 

The \emph{support function} of $f=e^{-\varphi}\in\LC_n$ is given by
\[
h_f = \mathcal{L}(-\log f)=\mathcal{L}\varphi.
\]
This extends the classical support function of a convex body in the sense that if $K\in\mathcal K^n$, then $h_{\mathbbm{1}_K}=h_K$.

For $f=e^{-\varphi}$ and $g=e^{-\psi}$ belonging to $\LC_n$, we use the standard Asplund sum (supremal convolution) operations
\[
f\star g = e^{-(\varphi \square \psi)},
\qquad
(\lambda\cdot f)(x)=e^{-\lambda\varphi(x/\lambda)}, \qquad \lambda>0,
\]
where $\varphi\square\psi$ is the infimal convolution
\[
(\varphi\square\psi)(x)=\inf_{x=x_1+x_2}\{\varphi(x_1)+\psi(x_2)\}, \qquad x\in\R^n.
\]
Equivalently,
\[
(\lambda\cdot f)(x)=f(x/\lambda)^\lambda .
\]

We say that a sequence $\{\varphi_k\}_{k=1}^\infty\subset\mathrm{Conv}(\R^n)$ \emph{epi-converges} to $\varphi\in\mathrm{Conv}(\R^n)$ if for every $x\in\R^n$, we have $\liminf_{k\to\infty}\varphi_k(x_k)\geq \varphi(x)$ for every sequence $\{x_k\}_{k=1}^\infty\subset\R^n$ converging to $x$, and $\limsup_{k\to\infty}\varphi_k(x_k)\leq \varphi(x)$ for some sequence $\{x_k\}_{k=1}^\infty\subset\R^n$ converging to $x$. A symmetric notion of convergence is defined for log-concave functions. We say that a sequence $\{f_k\}_{k=1}^\infty\subset\LC_n$ \emph{hypo-converges} to $f\in\LC_n$ if $\limsup_{k\to\infty}f_k(x_k)\leq f(x)$ for every sequence $\{x_k\}_{k=1}^\infty\subset\R^n$ converging to $x$, and $\liminf_{k\to\infty}f_k(x_k)\geq f(x)$ for some sequence $\{x_k\}_{k=1}^\infty\subset\R^n$ converging to $x$. Note that the sequence $\{\varphi_k\}_{k=1}^\infty\subset\mathrm{Conv}(\R^n)$ epi-converges to $\varphi\in\mathrm{Conv}(\R^n)$ if and only if $\{f_k=e^{-\varphi_k}\}_{k=1}^\infty$ hypo-converges to $f=e^{-\varphi}\in\LC_n$. For more background on convex and log-concave functions, we refer the reader to \cite{Colesanti-inbook,RockafellarBook,Rockafellar-Wets}.

We say that a function $\xi:\R^n\to\R$ is \emph{cosmically continuous} if:
\begin{itemize}
    \item $\xi$ is continuous in the usual sense on $\R^n$.

    \item The limit $\overline{\xi}(\theta)=\lim_{\lambda\to\infty}\frac{\xi(\lambda\theta)}{\lambda}$ exists (in the finite sense) \emph{uniformly} in $\theta\in\Sp$.
\end{itemize}
This definition was given by Falah and Rotem in  \cite[Definition 1.7]{Falah-Rotem}. For a cosmically continuous function $\xi:\mathbb R^n\to\mathbb R$, write
\[
\overline{\xi}(\theta):=\lim_{\lambda\to\infty}\frac{\xi(\lambda\theta)}{\lambda},
\qquad \theta\in\mathbb S^{n-1}.
\]
We will also need the following definition, which is from \cite[Definition 1.8]{Falah-Rotem}. Let $\{\mu_k\}_{k=1}^\infty$ and $\mu$ be finite Borel measures on $\R^n$, and let $\{\nu_k\}_{k=1}^\infty$ and $\nu$ be finite Borel measures on $\Sp$. We say that $(\mu_k,\nu_k)\to(\mu,\nu)$ \emph{cosmically} if, for every cosmically continuous $\xi$, we have
\[
\int_{\R^n}\xi\,d\mu_k+\int_{\Sp}\overline{\xi}\,d\nu_k
\stackrel{k\to\infty}{\longrightarrow}\int_{\R^n}\xi\,d\mu+\int_{\Sp}\overline{\xi}\,d\nu.
\]



Throughout the paper, whenever an object is defined through surface area measures in the log-concave setting, uniqueness is understood up to translations, in accordance with the
functional Minkowski problem (see Theorem \ref{thm:rotem-falah-main}).

\section{Canonical definition of the functional Blaschke sum}\label{sec:Blaschke-sum}

Let $f,g\in\LC_n$. The \emph{first variation} $\delta(f,g)$ of the total mass
$\int f$ along the perturbation $f\star (t\cdot g)$ is defined by (see \cite{Colesanti-Fragala-variational,Rotem-2022})
\begin{equation}
    \delta(f,g)=\lim_{t\to 0^+}\frac{\int(f\star(t\cdot g))-\int f}{t}.
\end{equation}

The following result is due to Falah and Rotem \cite[Theorem 1.4]{Falah-Rotem}.
\begin{theorem}\label{thm:rotem-falah-variation}
    For all $f\in\LC_n$ and every upper semicontinuous log-concave function $g:\R^n\to\R$, we have 
    \begin{equation}
        \delta(f,g)=\int_{\R^n}h_g\,d\mu_f+\int_{\Sp}h_{\supp(g)}\,d\nu_f.
    \end{equation}
    Here $\mu_f$ is the Borel measure on $\R^n$ defined by $\mu_f=(\nabla\phi)_\sharp (f\,dx)$ and $\nu_f$ is the Borel measure on $\Sp$ defined by $\nu_f=(n_{\supp(f)})_\sharp (f\,d\mathcal{H}^{n-1}|_{\partial\supp(f)})$. We refer to the pair $(\mu_f,\nu_f)$ as the \emph{surface area measures of $f$}.
\end{theorem}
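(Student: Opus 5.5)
The plan is to first prove the formula for smooth, strictly convex potentials with full domain, where no boundary term appears, by a change of variables through the gradient map. The general case would then follow by approximation, with the boundary measure $\nu_f$ appearing in the limit through the recession behaviour of $h_g$.

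\textbf{Reduction to support functions.} Write $f=e^{-\varphi}$ and $g=e^{-\psi}$. Since $-\log(t\cdot g)(x)=t\psi(x/t)$, its Legendre transform is $t\,h_g$. Because infimal convolution becomes addition under $\mathcal L$, we get
\[
f_t:=f\star(t\cdot g)=e^{-\mathcal L(h_f+t h_g)} .
\]
The problem is therefore to differentiate $t\mapsto \int e^{-\mathcal L(h_f+th_g)}$ at $t=0^+$. A useful structural fact is that this map is log-concave in $t\ge 0$, by Prékopa--Leindler applied to Asplund combinations. Hence the right derivative exists in $(-\infty,+\infty]$, and it is stable under the monotone approximations used below.

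\textbf{Smooth case.} Assume $\varphi$ is $C^2$ with $D^2\varphi>0$ and $\dom\varphi=\R^n$, and that $h_g$ is $C^2$ with controlled growth. Set $h_t=h_f+th_g$. Substitute $x=\nabla h_t(y)$ and use $\mathcal L h_t(\nabla h_t(y))=\langle y,\nabla h_t(y)\rangle-h_t(y)$. This gives
\[
\int f_t=\int_{\R^n} e^{\,h_t(y)-\langle y,\nabla h_t(y)\rangle}\det D^2h_t(y)\,dy .
\]
Differentiating at $t=0$ yields two terms:
\[
\int e^{h_f-\langle y,\nabla h_f\rangle}\Big[(h_g-\langle y,\nabla h_g\rangle)\det D^2h_f+\det D^2h_f\,\mathrm{tr}\big((D^2h_f)^{-1}D^2h_g\big)\Big]dy .
\]
The cofactor matrix of $D^2h_f$ is divergence free. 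Integrating the trace term by parts therefore moves one derivative off $h_g$. Since $\nabla\big(h_f-\langle y,\nabla h_f\rangle\big)=-D^2h_f\,y$, the resulting term exactly cancels $-\langle y,\nabla h_g\rangle\det D^2 h_f$. What survives is
\[
\int h_g(y)\,e^{h_f-\langle y,\nabla h_f\rangle}\det D^2h_f\,dy .
\]
Changing variables back via $y=\nabla\varphi(x)$, this equals $\int h_g(\nabla\varphi(x))f(x)\,dx=\int h_g\,d\mu_f$. In this case $\nu_f=0$, so the formula holds.

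\textbf{General case (the main obstacle).} This step is where the real difficulty lies, because of the boundary term $\nu_f$. I would approximate $\varphi$ by smooth, strictly convex, full-domain potentials $\varphi_k$, for example $\varphi_k=\varphi\,\square\,\tfrac{k}{2}|\cdot|^2$ with a small quadratic added. Where $\dom\varphi$ is bounded, the $\varphi_k$ have very steep gradients near $\partial\supp f$. Consequently, part of the mass of $\mu_{f_k}$ escapes to infinity in directions close to the outer normals $n_{\supp f}$.

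The key claim is that $(\mu_{f_k},0)\to(\mu_f,\nu_f)$ cosmically. Granting this, one applies it to the test function $\xi=h_g$, smoothed if needed. This $\xi$ is cosmically continuous with $\overline{\xi}=h_{\supp g}$, since $h_g(\lambda\theta)/\lambda\to h_{\supp g}(\theta)$. The cosmic limit then produces exactly $\int h_g\,d\mu_f+\int h_{\supp g}\,d\nu_f$. The claim itself is verified on the layer near the boundary, using the coarea formula and the fact that the escaping mass is asymptotically $f\,d\mathcal H^{n-1}$ on $\partial\supp f$.

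Two further points remain. First, the derivative at $0$ must be interchanged with the limit in $k$. I would handle this using the log-concavity in $t$ together with uniform convergence of $\int f_{k,t}$ for $t$ in a neighbourhood of $0$. Second, $g$ must be reduced to the nice case, by approximating it from above by functions with smooth support functions and using monotonicity of $\delta(f,\cdot)$. Justifying the cosmic convergence claim, particularly the uniformity near $\partial\supp f$, is the step I expect to be hardest.
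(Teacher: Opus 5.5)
The paper does not prove this statement; it is quoted from Falah--Rotem \cite[Theorem 1.4]{Falah-Rotem}, so your argument has to stand on its own. Your smooth-case computation is the standard Colesanti--Fragal\`a argument and is fine up to bookkeeping. With $\dom\varphi=\R^n$, the range of $\nabla\varphi$ is, up to a null set, $\operatorname{int}\dom h_f$, which need not be $\R^n$. So the domain of the $y$-integral, and the vanishing of the boundary terms in the integration by parts, need care.

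\textbf{The gap: both limits are one-sided.} For a concave $\phi$ on $[0,\infty)$ one has $\phi'(0^+)=\sup_{t>0}\frac{\phi(t)-\phi(0)}{t}$. Set $\phi_k(t)=\log\int f_k\star(t\cdot g)$ and $\phi(t)=\log\int f\star(t\cdot g)$. If $\phi_k\to\phi$, even uniformly near $0$, you only get $\phi'(0^+)\le\liminf_k\phi_k'(0^+)$. The endpoint derivative can genuinely drop in the limit, as $\phi_k(t)=\min\{kt,1/k\}\to 0$ shows. So log-concavity plus uniform convergence gives $\delta(f,g)\le\liminf_k\delta(f_k,g)$. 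Combined with your cosmic convergence claim, this yields only
\[
\delta(f,g)\le\int_{\R^n}h_g\,d\mu_f+\int_{\Sp}h_{\supp(g)}\,d\nu_f .
\]
Your reduction in $g$ points the same way: $g_j\ge g$ gives $\delta(f,g)\le\delta(f,g_j)$. Nothing in the plan proves the reverse inequality, and that is exactly where the boundary term must be produced rather than merely bounded.

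The defect is starkest when the right-hand side is $+\infty$. Take $f=\mathbbm{1}_K$ and $g=e^{-|x|^2/2}$. Then $h_{\supp(g)}\equiv+\infty$ on $\Sp$, and $h_g=\frac12|\cdot|^2$ is not cosmically continuous, so your cosmic-limit step does not apply at all. The entire content of the theorem in this case is the lower bound $\delta(f,g)=+\infty$.

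\textbf{A possible repair within your framework.} Differentiate at interior times instead of at $t=0$:
\begin{itemize}
\item[(a)] If $\phi_k\to\phi$ pointwise on $(0,\varepsilon)$, then $\phi_k'(s)\to\phi'(s)$ at every $s$ where $\phi$ is differentiable.
\item[(b)] When $\phi$ is continuous at $0$, the right derivative of a concave function is right-continuous there, so $\phi'(0^+)=\lim_{s\downarrow 0}\phi'(s)$.
\item[(c)] It therefore suffices to have the formula for $f_k\star(s\cdot g)$, which stays in your smooth, full-domain class when $h_g$ is smooth and finite.
\item[(d)] Then let $k\to\infty$, and afterwards $s\downarrow0$, using cosmic convergence for $f_k\star(s\cdot g)\to f\star(s\cdot g)$ and for $f\star(s\cdot g)\to f$.
\end{itemize}
For general $g$ you also need approximation from \emph{below}, e.g.\ by $g\,\mathbbm{1}_{jB_2^n}$ and by smooth minorants of $h_g$, with monotone convergence on the right-hand side. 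This is what gives the lower bound, including the cases where the right-hand side is $+\infty$.

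\textbf{The cosmic convergence claim.} The claim $(\mu_{f_k},0)\to(\mu_f,\nu_f)$ cosmically is not a routine verification. It is one direction of \cite[Theorem 1.10]{Falah-Rotem}, which this paper uses separately. You would have to prove it without appealing to the first variation formula, to avoid circularity. Note also that $h_g$ is an admissible cosmic test function only when $\supp(g)$ is compact.
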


\begin{remark}\label{rmk:SA}
    Let $f\in\LC_n$ and $g=\mathbbm{1}_{B_2^n}$. Then $h_{\mathbbm{1}_{B_2^n}}(x)=|x|$ and $h_{\supp(\mathbbm{1}_{B_2^n})}\equiv 1$. Hence, the \emph{first quermassintegral of $f$}, denoted $W_1(f)$, may be defined by
\begin{equation}\label{def:quermassintegral}
    W_1(f):=\delta(f,\mathbbm{1}_{B_2^n})=\int_{\R^n}|x|\,d\mu_f(x)+\nu_f(\Sp).
\end{equation}
Let $K\in\mathcal{K}^n$ and set $f=\mathbbm{1}_K$. Since $\mu_{\mathbbm{1}_K}=(\nabla\varphi_K)_{\sharp}(\mathbbm{1}_K\,dx)=\vol_n(K)\delta_o$, we have  
\[
\int_{\R^n}|x|\,d\mu_{\mathbbm{1}_K}(x)=\int_{\R^n}|x|\,d(\vol_n(K)\delta_o)(x)=\vol_n(K)|o|=0.
\]
Moreover, $\nu_{\mathbbm{1}_K}(\Sp)=S(K)$, so \eqref{def:quermassintegral} recovers the surface area $S(K)$:
\[
W_1(\mathbbm{1}_K)=\delta(\mathbbm{1}_K,\mathbbm{1}_{B_2^n})=S(K).
\]
\end{remark}

The following level set formula connects the surface area $W_1(f)$ to the first variation.

\begin{lemma}\label{thm:SA-layer-cake}
Let $f:\mathbb R^n\to[0,\infty)$ be upper semicontinuous and quasiconcave, and assume that
for every $s>0$ the superlevel set $\lev_{\geq s}f$ is a compact convex set, possibly empty. For a compact convex set $K$, let
\[
S(K):=\left.\frac{d}{dt}\right|_{t=0+}\vol_n(K+tB_2^n)
\]
denote its Minkowski surface area, or, equivalently, the coefficient of $t$ in Steiner's formula. Assume that there exists $t_0>0$ such that $F(t_0)<\infty$, where
\[
F(t):=\int_{\mathbb R^n}(f\star \mathbbm{1}_{tB_2^n})(x)\,dx
=\int_0^\infty \vol_n((\lev_{\geq s}f)+tB_2^n)\,ds .
\]
Then $F$ is right-differentiable at $t=0$, and
\[
\left.\frac{dF}{dt}\right|_{t=0+}=\int_0^\infty S(\lev_{\geq s}f)\,ds.
\]
In particular, if $f\in\LC_n$, then the finiteness condition $F(t_0)<\infty$ holds for every
$t_0>0$, and
\[
W_1(f)=\delta(f,\mathbbm{1}_{B_2^n})
=\int_0^\infty S(\lev_{\geq s}f)\,ds.
\]
\end{lemma}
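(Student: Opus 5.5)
The plan is to differentiate under the integral sign in the layer-cake representation of $F$, using the monotonicity of difference quotients of volume. First I verify the identity $\lev_{\geq s}(f\star\mathbbm{1}_{tB_2^n})=(\lev_{\geq s}f)+tB_2^n$. Here $(f\star\mathbbm{1}_{tB_2^n})(x)=\sup_{|y|\le t}f(x-y)$. Since each $\lev_{\geq s}f$ is compact and $f$ is upper semicontinuous, the supremum over the compact ball is attained. Hence the sup is $\ge s$ exactly when some $x-y\in\lev_{\geq s}f$ with $|y|\le t$. The layer-cake formula then gives the stated expression for $F(t)$.

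For each fixed $s$ with $K_s:=\lev_{\geq s}f\neq\varnothing$, Steiner's formula shows that $t\mapsto\vol_n(K_s+tB_2^n)$ is a polynomial in $t$ with nonnegative coefficients. Therefore
\[
q_s(t):=\frac{\vol_n(K_s+tB_2^n)-\vol_n(K_s)}{t}
\]
is nondecreasing in $t>0$ and decreases to $S(K_s)$ as $t\downarrow0$. The same holds trivially when $K_s=\varnothing$, where $q_s\equiv0$.

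Next I handle integrability. Since $F(t_0)<\infty$, we get $F(0)\le F(t_0)<\infty$, so $\int_0^\infty q_s(t_0)\,ds=(F(t_0)-F(0))/t_0<\infty$. For $0<t\le t_0$ we have $0\le q_s(t)\le q_s(t_0)$. By monotone convergence (for decreasing sequences dominated by an integrable function), or by dominated convergence,
\[
\frac{F(t)-F(0)}{t}=\int_0^\infty q_s(t)\,ds\to\int_0^\infty S(K_s)\,ds .
\]
This proves right-differentiability and the formula. The limit is finite, being bounded by $\int q_s(t_0)\,ds$.

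For $f\in\LC_n$, the superlevel sets are compact and convex, because $\varphi$ is coercive, lower semicontinuous and convex. Moreover, $f\star\mathbbm{1}_{t_0B_2^n}=e^{-(\varphi\square I_{t_0B})}$, where $I_{t_0B}$ is the convex indicator of the ball. The infimal convolution $\varphi\square I_{t_0B}$ is again convex, proper and lower semicontinuous (the ball is compact). It is also coercive, since its Legendre transform $\mathcal{L}\varphi+t_0|\cdot|$ has $o$ in the interior of its domain. So $f\star\mathbbm{1}_{t_0B_2^n}\in\LC_n$ is integrable, and $F(t_0)<\infty$. Finally, by definition $\delta(f,\mathbbm{1}_{B_2^n})$ is the right derivative of $F$ at $0$, because $t\cdot\mathbbm{1}_{B_2^n}=\mathbbm{1}_{tB_2^n}$. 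Hence $W_1(f)=\int_0^\infty S(\lev_{\geq s}f)\,ds$.

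The main obstacle is the justification of exchanging limit and integral, which is resolved by the monotonicity of the Steiner difference quotients. A secondary point is the case of lower-dimensional $K_s$, where Steiner's formula still holds, so the argument is unaffected.
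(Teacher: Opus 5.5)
Your proposal is correct and follows essentially the same route as the paper. Both arguments identify the superlevel sets of $f\star\mathbbm{1}_{tB_2^n}$ via attained suprema, use the Steiner polynomial to show that the difference quotients $q_s(t)$ decrease as $t\downarrow 0$, dominate them by $q_s(t_0)$ with $\int_0^\infty q_s(t_0)\,ds=(F(t_0)-F(0))/t_0<\infty$, apply dominated convergence, and then identify $\delta(f,\mathbbm{1}_{B_2^n})$ with $F'(0+)$. Your extra remarks on empty or lower-dimensional $K_s$, and your Legendre-transform check that $f\star\mathbbm{1}_{t_0B_2^n}$ is coercive, only make explicit what the paper states briefly.
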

Since we could not find a convenient reference in exactly this form, we include a proof in the appendix.

\vspace{2mm}

Falah and Rotem \cite[Theorem 1.5]{Falah-Rotem} also gave a complete solution of the functional Minkowski problem:

\begin{theorem}\label{thm:rotem-falah-main}
    Let $\mu$ be a finite Borel measure on $\R^n$ and $\nu$ be a finite Borel measure on $\Sp$. Then there exists a log-concave function $f\in\LC_n$ such that $(\mu_f,\nu_f)=(\mu,\nu)$ if and only if the pair $(\mu,\nu)$ satisfies the following conditions:
    \begin{itemize}
        \item[(i)] $\mu$ is not identically 0.

        \item[(ii)] $\mu$ has finite first moment, and the measure $\mu+\nu$ is centered in the sense that for every $\theta\in\Sp$ we have
        \[
\int_{\R^n}\langle x,\theta\rangle\,d\mu(x)+\int_{\Sp}\langle x,\theta\rangle\, d\nu(x)=0.
        \]

        \item[(iii)] $\mu$ and $\nu$ are not supported on a common hyperplane.
    \end{itemize}
    In this case, the function $f$ is unique up to translations: If $g\in\LC_n$ also satisfies $(\mu_g,\nu_g)=(\mu,\nu)$, then there exists $v\in\R^n$ such that $g(x)=f(x+v)$.
\end{theorem}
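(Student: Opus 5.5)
The plan has three parts: necessity of (i)--(iii), existence of $f$ by a variational argument, and uniqueness via an equality case of a functional Minkowski inequality.

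\textbf{Necessity.} Let $f=e^{-\varphi}\in\LC_n$.
\begin{itemize}
\item[(i)] Since $\mu_f=(\nabla\varphi)_\sharp(f\,dx)$, we have $\mu_f(\R^n)=J(f)>0$.
\item[(ii)] Finiteness of the first moment follows from $\int|x|\,d\mu_f\le W_1(f)$ and Lemma \ref{thm:SA-layer-cake}, which gives $W_1(f)<\infty$. For centering, fix $\theta\in\Sp$ and take $g=\mathbbm{1}_{\{\theta\}}$ (or its limit as a degenerate upper semicontinuous log-concave function). Then $f\star(t\cdot g)=f(\cdot-t\theta)$ has the same mass as $f$, so $\delta(f,g)=0$. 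Since $h_g(x)=\langle x,\theta\rangle$ and $h_{\supp(g)}(v)=\langle v,\theta\rangle$, Theorem \ref{thm:rotem-falah-variation} gives exactly the centering identity.
\item[(iii)] Suppose $\mu_f$ and $\nu_f$ are both concentrated on $\theta^\perp$. Take $g=\mathbbm{1}_{[-\theta,\theta]}$. Theorem \ref{thm:rotem-falah-variation} then gives
\[
\delta(f,g)=\int|\langle x,\theta\rangle|\,d\mu_f+\int|\langle v,\theta\rangle|\,d\nu_f=0.
\]
On the other hand, $f\star\mathbbm{1}_{t[-\theta,\theta]}(x)=\max_{|s|\le t}f(x+s\theta)$. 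Its mass grows at least linearly in $t$, by a one-dimensional argument on lines parallel to $\theta$ using coercivity and integrability. Hence $\delta(f,g)>0$, a contradiction.
\end{itemize}

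\textbf{Existence.} I would pass to the dual variable $h=\mathcal L\varphi$ and minimize
\[
\Phi(h)=\int_{\R^n}h\,d\mu+\int_{\Sp}\overline{h}\,d\nu-\mu(\R^n)\log\int_{\R^n}e^{-\mathcal L h}
\]
over convex $h$ with $o\in\operatorname{int}\dom h$. Here $\overline h$ is the recession (support) function of $\dom(\mathcal Lh)$. By (ii), $\Phi$ is invariant under $h\mapsto h+\langle\cdot,v\rangle$ (translations of $f$); it is also invariant under $h\mapsto h+c$, since $\int e^{-\mathcal L h}$ scales by $e^{c}$. This allows normalizations, for instance fixing a centering of $f$ and $\int e^{-\mathcal Lh}$.

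The key step is coercivity. Condition (iii), together with finiteness of the measures and the first moment, gives a uniform lower bound
\[
\int|\langle x,\theta\rangle|\,d\mu+\int|\langle v,\theta\rangle|\,d\nu\ge c>0 \quad\text{for all } \theta\in\Sp.
\]
Combined with centering, this bounds the normalized minimizing sequence. By Blaschke-type selection for log-concave functions (epi-convergence of the $\varphi_k$), a subsequence converges to some $f\in\LC_n$. Lower semicontinuity of the linear terms and continuity of the mass term give a minimizer.

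Perturbing the minimizer by $h\mapsto h+t\,h_g$ and applying Theorem \ref{thm:rotem-falah-variation} yields the Euler--Lagrange relation
\[
\int h_g\,d\mu+\int h_{\supp g}\,d\nu=\frac{\mu(\R^n)}{J(f)}\,\delta(f,g)\qquad\text{for all } g.
\]
Since such test functions determine the pair of measures, this gives $(\mu,\nu)=\lambda(\mu_f,\nu_f)$ with $\lambda=\mu(\R^n)/J(f)$. A rescaling $f\mapsto c f$ then absorbs $\lambda$: $\mu_{cf}=c\mu_f$ and $\nu_{cf}=c\nu_f$.

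\textbf{Uniqueness.} Use the functional Minkowski first inequality
\[
\delta(f,g)\ge J(f)\Bigl(1+\log J(g)-\log J(f)\Bigr),
\]
which comes from concavity of $t\mapsto\log J(f\star t\cdot g)$ (Prékopa--Leindler). If $(\mu_f,\nu_f)=(\mu_g,\nu_g)$, then $\delta(f,g)=\delta(g,g)$ and $\delta(g,f)=\delta(f,f)$. Applying the inequality in both orders forces $J(f)=J(g)$ and equality in Prékopa--Leindler, so $g$ is a translate of $f$.

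I expect the main obstacle to be the coercivity and compactness step, together with justifying the Euler--Lagrange differentiation for arbitrary test functions. The singular part $\nu$ and the domain boundary make both delicate, and a cosmic-convergence control of $(\mu_{f_k},\nu_{f_k})$ is likely needed there.
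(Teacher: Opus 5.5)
The paper does not prove this theorem; it quotes it from Falah and Rotem, so your proposal has to stand on its own.

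\textbf{Necessity.} This part is fine. The functions $\mathbbm{1}_{\{\theta\}}$ and $\mathbbm{1}_{[-\theta,\theta]}$ are admissible in Theorem \ref{thm:rotem-falah-variation}, and they give centering and (iii). Indeed $\delta(f,\mathbbm{1}_{[-\theta,\theta]})=2\int_{\theta^\perp}\max_{s}f(y+s\theta)\,dy>0$, and affine hyperplanes missing $o$ are ruled out by centering since $J(f)>0$. The first moment bound $\int|x|\,d\mu_f\le W_1(f)<\infty$ is also correct.

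\textbf{Existence.} Here there is a genuine gap at the Euler--Lagrange step, not merely a technical one. Since $h_g$ is convex, $h-th_g$ is in general not an admissible competitor. Moreover, Theorem \ref{thm:rotem-falah-variation} only gives the right derivative of $t\mapsto J(f\star t\cdot g)$ at $0$. Minimality of $h$ therefore yields only
\[
\int_{\R^n}h_g\,d\mu+\int_{\Sp}h_{\supp(g)}\,d\nu\ \geq\ \lambda\,\delta(f,g),\qquad \lambda=\frac{\mu(\R^n)}{J(f)}.
\]
This says only that the signed pair $(\mu-\lambda\mu_f,\nu-\lambda\nu_f)$ is nonnegative on convex test functions, which does not force it to vanish. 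For instance, $\delta_a+\delta_{-a}-2\delta_o$ is nonnegative on every convex function. Likewise, the centered pair $(\lambda\mu_f,\lambda\nu_f+\delta_{e_1}+\delta_{-e_1})$ satisfies this inequality for every $g$, because $h_{\supp(g)}(e_1)+h_{\supp(g)}(-e_1)\geq 0$.

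What is missing is a two-sided variation. One would proceed as follows:
\begin{enumerate}
\item Extend $\Phi$ to nonconvex $h$, using the asymptotic slope $\lim_{r\to\infty}h(r\theta)/r$ in the $\nu$-term.
\item Check that $\Phi$ does not increase under $h\mapsto\mathcal{L}\mathcal{L}h$. Then the convex minimizer also minimizes over all $h+t\xi$ with $t$ of either sign.
\item Compute $\frac{d}{dt}\big|_{t=0}\int e^{-\mathcal{L}(h+t\xi)}$ for cosmically continuous $\xi$.
\end{enumerate}
For bounded $\xi$, step 3 is the Cordero-Erausquin--Klartag argument for moment measures, and it gives $\mu=\lambda\mu_f$. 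Identifying $\nu$, however, requires $\overline{\xi}\neq 0$. Then $\dom\mathcal{L}(h+t\xi)$ moves like a Wulff shape, and you need an Aleksandrov-type lemma producing the boundary term $\int\overline{\xi}\,d\nu_f$. Nothing in Theorem \ref{thm:rotem-falah-variation} supplies this. Together with the compactness step you only sketch, it is the real content of existence.

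A route that avoids this lemma is approximation. The measure $\mu_k:=\mu+\frac1k(k\,\mathrm{id})_\sharp\nu$ satisfies the Cordero-Erausquin--Klartag hypotheses, and $(\mu_k,0)\to(\mu,\nu)$ cosmically. One then concludes with a compactness theorem under cosmic convergence.

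\textbf{Uniqueness.} Your argument has the right structure but rests on a false inequality. Take $f=g=\varepsilon\mathbbm{1}_K$. Then $f\star t\cdot f=(1+t)\cdot f=\varepsilon^{1+t}\mathbbm{1}_{(1+t)K}$, so $\delta(f,f)=\varepsilon\vol_n(K)(n+\log\varepsilon)$. This is below your bound $J(f)=\varepsilon\vol_n(K)$ once $\varepsilon<e^{1-n}$.

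Concavity of $t\mapsto\log J(f\star t\cdot g)$ only gives $\delta(f,g)\geq J(f)\log\bigl(J(f\star g)/J(f)\bigr)$. To bring in $J(g)$, you must interpolate along $(1-t)\cdot f\star t\cdot g$. Its log-mass is concave on $[0,1]$ by Pr\'ekopa--Leindler, and its right derivative at $t=0$ is $(\delta(f,g)-\delta(f,f))/J(f)$. This gives
\[
\delta(f,g)\geq\delta(f,f)+J(f)\log\frac{J(g)}{J(f)},
\]
which is exactly Theorem \ref{thm:CF-entropy}, since $\delta(f,f)=nJ(f)+\int f\log f$.

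With this inequality your argument goes through:
\begin{enumerate}
\item Equal pairs give $J(f)=\mu_f(\R^n)=J(g)$ directly.
\item They also give $\delta(f,g)=\delta(g,g)$ and $\delta(g,f)=\delta(f,f)$.
\item The two inequalities then force $\delta(f,f)=\delta(g,g)$, hence equality for $(f,g)$.
\item So the log-mass is affine on $[0,1]$ and equality holds in Pr\'ekopa--Leindler.
\item Dubuc's characterization gives $g=cf(\cdot-x_0)$, and $c=1$ because $J(f)=J(g)$.
\end{enumerate}
The mass normalization is essential here, since equality in Theorem \ref{thm:CF-entropy} also holds for $g=cf$.
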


    A direct application of this result yields the following 

\begin{lemma}\label{lem:minkowski-blaschke-sum}
    Let $f_1,f_2\in\LC_n$. Define the finite Borel measure $\mu_{f_1\sharp f_2}:=\mu_{f_1}+\mu_{f_2}$ on $\R^n$ and the finite Borel measure $\nu_{f_1\sharp f_2}:=\nu_{f_1}+\nu_{f_2}$ on $\Sp$.   The pair $(\mu_{f_1\sharp f_2},\nu_{f_1\sharp f_2})$ defines the surface area measures of a log-concave function $h\in\LC_n$. In this case, the function $h$ is unique up to translations: if $\tilde{h}\in\LC_n$ also satisfies $(\mu_{\tilde{h}},\nu_{\tilde{h}})=(\mu_{f_1\sharp f_2},\nu_{f_1\sharp f_2})$, then there exists $v\in\R^n$ such that $\tilde{h}(x)=h(x+v)$.
\end{lemma}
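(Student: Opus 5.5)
The plan is to verify that the pair $(\mu,\nu):=(\mu_{f_1}+\mu_{f_2},\nu_{f_1}+\nu_{f_2})$ satisfies conditions (i)--(iii) of Theorem \ref{thm:rotem-falah-main}. The existence of $h\in\LC_n$ with $(\mu_h,\nu_h)=(\mu,\nu)$, and its uniqueness up to translation, then follow directly from that theorem. The key input is the necessity direction of Theorem \ref{thm:rotem-falah-main}. Since $f_1,f_2\in\LC_n$ are themselves solutions of the Minkowski problem for their own data $(\mu_{f_i},\nu_{f_i})$, each of these pairs satisfies (i)--(iii). We then check that each condition survives addition.

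First, finiteness: $\mu_{f_i}$ and $\nu_{f_i}$ are finite Borel measures, being pushforwards of finite measures (the total mass $\mu_{f_i}(\R^n)=J(f_i)<\infty$, and $\nu_{f_i}$ is finite by the necessity part). So the sums $\mu$ and $\nu$ are finite Borel measures on $\R^n$ and $\Sp$ respectively. Condition (i) is immediate because the measures are nonnegative: $\mu(\R^n)\geq\mu_{f_1}(\R^n)>0$.

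For condition (ii), the first moment is additive: $\int|x|\,d\mu=\int|x|\,d\mu_{f_1}+\int|x|\,d\mu_{f_2}<\infty$. For each $\theta\in\Sp$, the centering integral of $(\mu,\nu)$ equals the sum of the centering integrals of $(\mu_{f_1},\nu_{f_1})$ and $(\mu_{f_2},\nu_{f_2})$. Each of these vanishes, so their sum vanishes too. Linearity of the integral in the measure is legitimate here because all the integrands are integrable.

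Condition (iii) is the only point needing a small argument. Suppose $\mu$ and $\nu$ were both supported on a common hyperplane. Here a hyperplane in $\R^n$ is understood as a linear subspace $\theta^\perp$ (possibly an affine one for $\mu$, in the sense of Falah and Rotem), and $\nu$ is supported on $\theta^\perp\cap\Sp$. Because the measures are nonnegative, $\supp\mu_{f_1}\subseteq\supp\mu$ and $\supp\nu_{f_1}\subseteq\supp\nu$. So $\mu_{f_1}$ and $\nu_{f_1}$ would lie on that same hyperplane, contradicting (iii) for $f_1$. I expect this to be the main (though mild) obstacle: one has to be careful that ``common hyperplane'' is interpreted exactly as in \cite{Falah-Rotem}, whether linear through the origin or affine. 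In either reading, the support-inclusion monotonicity argument goes through unchanged. Applying Theorem \ref{thm:rotem-falah-main} then gives both existence and uniqueness up to translations.
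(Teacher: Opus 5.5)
Your proposal is correct and follows essentially the same route as the paper: both use the necessity direction of Theorem \ref{thm:rotem-falah-main} for each $f_i$, then check (i)--(ii) by additivity and linearity of the integrals, and (iii) by nonnegativity of the measures. The paper phrases (iii) as the summed measures giving zero mass to the complement of $H$, whereas you use inclusion of supports; this is the same idea, and your remark that it works whether the hyperplane is linear or affine is a harmless refinement.
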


\begin{proof}
We verify that the pair $(\mu_{f_1\sharp f_2},\nu_{f_1\sharp f_2})$ satisfies the three conditions in Lemma \ref{thm:rotem-falah-main}. For part (i), note that by Theorem \ref{thm:rotem-falah-main}, $\mu_{f_1}$ and $\mu_{f_2}$ are not identically 0, so the sum $\mu_{f_1}+\mu_{f_2}=\mu_{f_1\sharp f_2}$ is not identically 0.

(ii)  By Theorem \ref{thm:rotem-falah-main},  the measures $\mu_{f_1}$ and $\mu_{f_2}$ have finite first moment, i.e., 
$\int_{\R^n} |x|\,d\mu_{f_i}(x)<\infty$ for $i=1,2$, so their sum does as well. Indeed, by the linearity of integration with respect to measures,
\[
\int_{\R^n} |x|\,d\mu_{f_1\sharp f_2}(x)
=\int_{\R^n} |x|\,d(\mu_{f_1}+\mu_{f_2})(x)
=\int_{\R^n} |x|\,d\mu_{f_1}(x)+\int_{\R^n} |x|\,d\mu_{f_2}(x)
<\infty.
\]
To see the centering condition, note that for any $\theta\in\Sp$ we have
\begin{align*}
    \int_{\R^n}\langle x,\theta\rangle\,d\mu_{f_i}(x)+\int_{\Sp}\langle x,\theta\rangle\, d\nu_{f_i}(x)=0\qquad i=1,2.
\end{align*}
Hence by the linearity of the integral,
\begin{align*}
    &\int_{\R^n}\langle x,\theta\rangle\,d\mu_{f_1\sharp f_2}(x)+\int_{\Sp}\langle x,\theta\rangle\, d\nu_{f_1\sharp f_2}(x)\\
    =&\int_{\R^n}\langle x,\theta\rangle\,d(\mu_{f_1}+\mu_{f_2})(x)+\int_{\Sp}\langle x,\theta\rangle\, d(\nu_{f_1}+\nu_{f_2})(x)\\
    =&\int_{\R^n}\langle x,\theta\rangle\,d\mu_{f_1}(x)+\int_{\Sp}\langle x,\theta\rangle\, d\nu_{f_1}(x)+\int_{\R^n}\langle x,\theta\rangle\,d\mu_{f_2}(x)+\int_{\Sp}\langle x,\theta\rangle\, d\nu_{f_2}(x)=0.
\end{align*}

(iii) By Theorem \ref{thm:rotem-falah-main}, each pair $(\mu_{f_1},\nu_{f_1}),(\mu_{f_2},\nu_{f_2})$ is not supported on a hyperplane. Suppose by way of contradiction that there exists a hyperplane $H\subset\R^n$ such that
$\mu_1+\mu_2$ is supported on $H$ and $\nu_1+\nu_2$ is supported on $H\cap \Sp$. Then
\[
0=(\mu_1+\mu_2)(\R^n\setminus H)=\mu_1(\R^n\setminus H)+\mu_2(\R^n\setminus H),
\]
and since the measures are nonnegative this implies $\mu_1(\R^n\setminus H)=\mu_2(\R^n\setminus H)=0$,
i.e., $\mu_1$ and $\mu_2$ are each supported on $H$. Similarly, the identity
\[
0=(\nu_1+\nu_2)(\Sp\setminus (H\cap \Sp))
=\nu_1(\Sp\setminus (H\cap \Sp))+\nu_2(\Sp\setminus (H\cap \Sp))
\]
implies that $\nu_1$ and $\nu_2$ are each supported on $H\cap \Sp$.
Hence for each $i=1,2$, both $\mu_i$ and $\nu_i$ are supported on the same hyperplane $H$, a contradiction.
\end{proof}

This shows that the following Blaschke sum and homothety operations on log-concave functions are well-defined and unique up to translations:

\begin{definition}[Functional Blaschke sum and homothety]
Let $f_1,f_2\in\LC_n$. Define the \emph{canonical Blaschke sum} $f_1\sharp f_2\in\LC_n$ to be the log-concave function whose surface area measures are the sums of the surface area measures of $f_1,f_2$, i.e., 
\[
(\mu_{f_1\sharp f_2},\nu_{f_1\sharp f_2})=(\mu_{f_1}+\mu_{f_2},\nu_{f_1}+\nu_{f_2}).
\]
For $f\in\LC_n$ and $\lambda>0$, we define the \emph{Blaschke homothety} $\lambda\odot f$ by 
\[
(\mu_{\lambda\odot f},\nu_{\lambda\odot f})=(\lambda\mu_f,\lambda\nu_f).
\]
\end{definition}

Next, we highlight some of the basic properties of these operations.

\begin{proposition}\label{prop:Blaschke-operations}
    Let $f_1,f_2,f_3,f\in\LC_n$, and let $g:\R^n\to\R$ be upper semicontinuous and log-concave. The following algebraic properties hold: 
    \begin{itemize}
        \item[(i)] $f_1\sharp f_2\in\LC_n$ and $\lambda\odot f\in\LC_n$ for every $\lambda>0$.
        
        \item[(ii)] $\delta(f_1\sharp f_2,g)=\delta(f_1,g)+\delta(f_2,g)$.

        \item[(iii)]  $\delta(\lambda\odot f,g)=\lambda\delta(f,g)$ for every $\lambda>0$.

        \item[(iv)] $f_1\sharp f_2=f_2\sharp f_1$ (up to a translation).

        \item[(v)] $(f_1\sharp f_2)\sharp f_3=f_1\sharp(f_2\sharp f_3)$ (up to a translation).

        \item[(vi)] $1\odot f=f$ (up to a translation).

        \item[(vii)] For all  $\lambda_1,\lambda_2>0$, we have $(\lambda_1\odot f)\sharp(\lambda_2\odot f)=(\lambda_1+\lambda_2)\odot f$ (up to a translation).

        \item[(viii)] We have $J(f_1\sharp f_2)=J(f_1)+J(f_2)$ and $J(\lambda\odot f)=\lambda J(f)$.
    \end{itemize}
\end{proposition}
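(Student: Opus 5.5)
The plan is to reduce every item to statements about the pair of surface area measures, using Theorem \ref{thm:rotem-falah-variation} and the uniqueness part of Theorem \ref{thm:rotem-falah-main}.

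For (i), Lemma \ref{lem:minkowski-blaschke-sum} already gives $f_1\sharp f_2\in\LC_n$. For $\lambda\odot f$ I will check conditions (i)--(iii) of Theorem \ref{thm:rotem-falah-main} for the pair $(\lambda\mu_f,\lambda\nu_f)$. Scaling by $\lambda>0$ preserves nontriviality, finiteness of the first moment, the centering identity (it is linear), and the support sets, so the verification is immediate.

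Items (ii) and (iii) come from plugging the measures into the first variation formula. By Theorem \ref{thm:rotem-falah-variation},
\[
\delta(f_1\sharp f_2,g)=\int h_g\,d(\mu_{f_1}+\mu_{f_2})+\int_{\Sp}h_{\supp(g)}\,d(\nu_{f_1}+\nu_{f_2}),
\]
which splits as $\delta(f_1,g)+\delta(f_2,g)$ by linearity of the integral in the measure. The homothety case is identical. The one subtlety is that $h_g$ may take the value $+\infty$ or be negative. I will argue that both sides are well defined in $(-\infty,+\infty]$ simultaneously. Since $h_g$ is convex and proper, it is bounded below by an affine function, and the measures have finite first moment, so the negative parts are integrable. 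This lets the sum be split legitimately.

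Items (iv)--(vii) use the uniqueness statement. Each identity asserts that two functions in $\LC_n$ have the same surface area measures:
\begin{itemize}
\item for (iv), $\mu_{f_1}+\mu_{f_2}=\mu_{f_2}+\mu_{f_1}$;
\item for (v), associativity of addition of measures;
\item for (vi), $1\cdot\mu_f=\mu_f$;
\item for (vii), $\lambda_1\mu_f+\lambda_2\mu_f=(\lambda_1+\lambda_2)\mu_f$;
\end{itemize}
with the same identities for $\nu$. Theorem \ref{thm:rotem-falah-main} then gives equality up to translation. One point needs care in (v): $f_1\sharp f_2$ is defined only up to translation, so I must check that the surface area measures are translation invariant. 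Translating $f$ by $v$ leaves $\nabla\varphi$ pushed forward the same, since the gradient is evaluated at the shifted point and the density shifts with it, and it likewise leaves the normals and boundary density unchanged. Hence the iterated sums are well defined.

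Item (viii) is the step I expect to need an actual idea, namely an expression of total mass through the surface area measures. The first attempt is to take $g=f$ in the first variation. Because $f\star(t\cdot f)=(1+t)\cdot f$ and $J((1+t)\cdot f)=\int f(x/(1+t))^{1+t}dx$, differentiating at $t=0$ gives
\[
\delta(f,f)=nJ(f)-\int f\log f .
\]
This is not linear in the measures in an obvious way, so I will instead use a fixed test function. Take $g=\mathbbm{1}_{\{o\}}$, or better a function with $h_g\equiv 1$ on $\R^n$ and $\supp(g)=\{o\}$, namely $g=e^{-1}\mathbbm{1}_{\{o\}}$, for which $h_g=\mathcal{L}(1+\infty\mathbbm{1}_{\R^n\setminus\{o\}})\equiv 1$. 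Then $f\star(t\cdot g)=e^{-t}f$, so
\[
\delta(f,g)=\frac{d}{dt}\Big|_{t=0}e^{-t}J(f)=-J(f).
\]
Alternatively, $\mu_f(\R^n)=\int f\,dx=J(f)$ directly from the pushforward definition, since the density is $f\,dx$. This second route is cleanest: $J(f)=\mu_f(\R^n)$, so
\[
J(f_1\sharp f_2)=\mu_{f_1}(\R^n)+\mu_{f_2}(\R^n)=J(f_1)+J(f_2),
\]
and $J(\lambda\odot f)=\lambda\mu_f(\R^n)=\lambda J(f)$. Translation invariance of $J$ removes the ambiguity in the representative.
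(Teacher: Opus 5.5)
Your proposal is correct and follows essentially the same route as the paper. Items (i)--(vii) come from additivity and homogeneity of the pair $(\mu_f,\nu_f)$ together with uniqueness in Theorem~\ref{thm:rotem-falah-main}, and (viii) comes from $J(f)=\mu_f(\R^n)$. Your extra checks (translation invariance of the measures, integrability of the negative part of $h_g$) only make explicit what the paper leaves implicit.

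The abandoned detours in (viii) contain sign slips: in fact $\delta(f,f)=nJ(f)+\int f\log f$ and $h_{e^{-1}\mathbbm{1}_{\{o\}}}\equiv -1$. Neither plays any role in your final argument.
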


\begin{proof}
(i) The statement $f_1\sharp f_2\in\LC_n$ follows from Lemma \ref{lem:minkowski-blaschke-sum}. An argument similar to the proof of Lemma \ref{lem:minkowski-blaschke-sum} shows that the pair $(\mu_{\lambda\odot f},\nu_{\lambda\odot f})$ satisfies the conditions of Theorem  \ref{thm:rotem-falah-main} and hence are the surface area measures of a log-concave function $\lambda\odot f\in\LC_n$.

(ii) By Theorem \ref{thm:rotem-falah-variation} and the definition of the Blaschke sum,
\begin{align*}
\delta(f_1\sharp f_2,g)&=\int_{\R^n}h_g\,d\mu_{f_1\sharp f_2}+\int_{\Sp}h_{\supp(g)}\,d\nu_{f_1\sharp f_2}\\
&=\int_{\R^n}h_g\,d\mu_{f_1}+\int_{\Sp}h_{\supp(g)}\,d\nu_{f_1}+\int_{\R^n}h_g\,d\mu_{f_2}+\int_{\Sp}h_{\supp(g)}\,d\nu_{f_2}\\
&=\delta(f_1,g)+\delta(f_2,g).
\end{align*}

(iii) By definition of the Blaschke $\lambda$-homothety, for every $\lambda>0$ we have 
\begin{align*}
    \delta(\lambda\odot f,g) &=\int_{\R^n}h_g\,d\mu_{\lambda\odot f}+\int_{\Sp}h_{\supp(g)}\,d\nu_{\lambda\odot f}
    =\int_{\R^n}h_g\,d(\lambda\mu_f)+\int_{\Sp}h_{\supp(g)}\,d(\lambda\nu_f)\\
    &=\lambda\int_{\R^n}h_g\,d\mu_f+\lambda\int_{\Sp}h_{\supp(g)}\,d\nu_f=\lambda\delta(f,g).
\end{align*}

Parts (iv) and (v) follow immediately  from the definition of the Blaschke sum, and part (vi) follows immediately from the definition of $\lambda\odot f$ with $\lambda=1$. Part (vii) can be proved along the same lines as the previous parts.

    (viii) Note that for any $f\in\LC_n$, we have  $\mu_f(\R^n)=\int_{\R^n} f(x)\,dx$. Thus by definition of the Blaschke sum, 
    \[
J(f_1\sharp f_2)=\int f_1\sharp f_2=\mu_{f_1\sharp f_2}(\R^n)=\mu_{f_1}(\R^n)+\mu_{f_2}(\R^n)=\int f_1+\int f_2=J(f_1)+J(f_2).
    \]
    Similarly, if $\lambda>0$ then
    \[
J(\lambda\odot f)=\int\lambda\odot f=\mu_{\lambda\odot f}(\R^n)=\lambda\mu_f(\R^n)=\lambda\int f=\lambda J(f).
    \]
\end{proof}

\begin{remark}
Note that (viii) shows we cannot expect a direct linear analogue of the classical Kneser--S\"uss inequality \eqref{eqn:classical-KS-ineq}.
\end{remark}
\subsection{Functional Blaschke symmetral}

\begin{definition}\label{def:mainDef}
    Let $f\in\LC_n$ and $u\in\Sp$. The \emph{Blaschke symmetrization $B^\sharp_u f$ of $f$ with respect to the hyperplane $u^\perp$} is defined by
    \[
    B^\sharp_u f:=\left(\frac{1}{2}\odot f\right)\sharp\left(\frac{1}{2}\odot R_u f\right).
    \]
    In other words, $B_u^\sharp f$ is the log-concave function whose surface area measures are
    \[
(\mu_{B_u^\sharp f},\nu_{B_u^\sharp f})=\frac{1}{2}\left(\mu_f+(R_u)_\#\mu_f,\nu_f+(R_u)_\#\nu_f\right).
    \]
\end{definition}

By Proposition \ref{prop:Blaschke-operations}(i), the Blaschke symmetral is well-defined and $B_u^\sharp f\in\LC_n$. It is unique up to translations in the sense of Theorem \ref{thm:rotem-falah-main}.

In parallel with the geometric theory, we highlight some basic properties of the functional Blaschke symmetral. A key feature is that it preserves surface area and total mass.

\begin{proposition}\label{prop-Blaschke-symmetral}
    Let $f\in\LC_n$ and $u\in\Sp$. 
    \begin{itemize}
        \item[(i)] (Reflection invariance) $R_u(B_u^\sharp f)=B_u^\sharp f$ (up to a translation).

        \item[(ii)] (Idempotence) $B_u^\sharp(B_u^\sharp f)=B_u^\sharp f$ (up to a translation).

        \item[(iii)] (Invariance on $H$-symmetric functions) If $f=R_u f$, then $B_u^\sharp f=f$ (up to a translation).

        \item[(iv)] (Surface area preserving) $W_1(B_u^\sharp f)=W_1(f)$. 

        \item[(v)] (Total mass preserving) $J(B_u^\sharp f)=J(f)$.
        
        \end{itemize}
\end{proposition}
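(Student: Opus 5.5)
The plan is to reduce every item to an identity between surface area measures and then invoke the uniqueness part of Theorem \ref{thm:rotem-falah-main}. The basic tool is the transformation rule for reflections: for $f=e^{-\varphi}\in\LC_n$,
\[
\mu_{R_uf}=(R_u)_\#\mu_f,\qquad \nu_{R_uf}=(R_u)_\#\nu_f .
\]
I would prove this first. Since $R_u$ is an orthogonal involution, $\nabla(\varphi\circ R_u)=R_u\nabla\varphi\circ R_u$, and $R_u$ preserves Lebesgue measure, so
\[
\mu_{R_uf}=(R_u\circ\nabla\varphi\circ R_u)_\sharp(f\circ R_u\,dx)=(R_u)_\#\mu_f .
\]
For $\nu$, we have $\supp(R_uf)=R_u\supp(f)$, the outer normal transforms as $n_{R_u\supp f}(R_ux)=R_u n_{\supp f}(x)$, and $\mathcal H^{n-1}$ is invariant under $R_u$. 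This gives $\nu_{R_uf}=(R_u)_\#\nu_f$. The identity is exactly what Definition \ref{def:mainDef} implicitly uses, and checking the boundary term carefully is the only real point. Beyond that, it is just the change-of-variables formula.

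\textbf{(i)–(iii).} For (i), apply the rule to $g=B_u^\sharp f$. Because $(R_u)_\#(R_u)_\#=\mathrm{id}$, we get
\[
(R_u)_\#\mu_g=\tfrac12\big((R_u)_\#\mu_f+\mu_f\big)=\mu_g,
\]
and likewise for $\nu_g$. Hence $R_ug$ and $g$ have the same surface area measures, and uniqueness in Theorem \ref{thm:rotem-falah-main} gives $R_ug=g$ up to translation. For (iii), if $f=R_uf$ then the measures of $B_u^\sharp f$ are $\tfrac12(\mu_f+\mu_f,\nu_f+\nu_f)=(\mu_f,\nu_f)$, and uniqueness again gives the claim. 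For (ii), surface area measures are translation invariant: translating $f$ shifts $\nabla\varphi$ only in the argument, and it translates $\supp f$ without changing its normals. So $B_u^\sharp$ is well defined on translation classes. By (i) the measures of $B_u^\sharp f$ are $R_u$-invariant, and the computation in (iii) applied to them gives (ii).

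\textbf{(iv)–(v).} For (v), use $\mu_g(\R^n)=J(g)$ as in Proposition \ref{prop:Blaschke-operations}(viii):
\[
J(B_u^\sharp f)=\tfrac12\big(\mu_f(\R^n)+(R_u)_\#\mu_f(\R^n)\big)=\mu_f(\R^n)=J(f),
\]
since pushforwards preserve total mass. For (iv), use the formula \eqref{def:quermassintegral}. By Proposition \ref{prop:Blaschke-operations}(ii)–(iii), or directly,
\[
W_1(B_u^\sharp f)=\tfrac12\Big(\int|x|\,d\mu_f+\int|R_ux|\,d\mu_f+\nu_f(\Sp)+\nu_f(R_u^{-1}\Sp)\Big).
\]
This equals $W_1(f)$ because $|R_ux|=|x|$ and $R_u\Sp=\Sp$. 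I expect no obstacle here beyond the reflection rule established at the start.
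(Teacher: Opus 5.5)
Your proposal is correct and follows essentially the same route as the paper. Both first establish $(\mu_{R_u h},\nu_{R_u h})=((R_u)_\#\mu_h,(R_u)_\#\nu_h)$ by the chain rule and change of variables, then reduce every item to an identity of surface area measures combined with the uniqueness in Theorem \ref{thm:rotem-falah-main}. The only differences are cosmetic: in (ii) you read off the $R_u$-invariance of the measures of $B_u^\sharp f$ directly from your computation in (i), rather than passing through ``$R_u g$ is a translate of $g$'', and in (iii) you compute the measures directly instead of citing Proposition \ref{prop:Blaschke-operations}(vi)--(vii).
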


\begin{proof}
    (i) Let $h=e^{-\psi}\in\LC_n$ and set $\widetilde{h}:=R_u h=e^{-\widetilde{\psi}}$ where $\widetilde{\psi}=R_u\psi$. Note that  $R_u$ is a linear orthogonal transformation and $R_u^{-1}=R_u$. Since $\psi$ is convex, it is locally Lipschitz on $\operatorname{int}(\dom(\psi))$, and therefore is differentiable a.e. by Rademacher's theorem (see, e.g., \cite[Theorem 25.5]{RockafellarBook}). Thus, for a.e. $x$ such that $\psi$ is differentiable at $R_u x$, the chain rule yields
\[
\nabla\widetilde{\psi}(x)=\nabla(\psi\circ R_u)(x)=R_u^\top\nabla\psi(R_u x)=R_u\nabla\psi(R_u x).
\] 
Now fix a Borel set $A\subset\R^n$. By the definition of $\mu_{\widetilde{h}}$,
    \[
\mu_{\widetilde{h}}(A)=\int_{\{x: \nabla\widetilde{\psi}(x)\in A\}}\widetilde{h}(x)\,dx=\int_{\{x: R_u\nabla\psi(R_u x)\in A\}}h(R_u x)\,dx.
    \]
    Change variables to $y=R_u x$. Since $R_u$ is orthogonal, $dx=dy$. Hence
    \[
\mu_{\widetilde{h}}(A)=\int_{\{y: R_u\nabla\psi(y)\in A\}}h(y)\,dy
=\int_{\{y: \nabla\psi(y)\in R_u A\}}h(y)\,dy=\mu_h(R_u A).
    \]
    Since $R_u^{-1}=R_u$, this means that $\mu_{R_u h}=(R_u)_{\#}\mu_h$. Similarly, since $\supp(R_u h)=R_u(\supp(h))$, and since reflections send outer unit normals to outer unit normals, we obtain 
    \[
n_{R_u\supp(h)}(R_u y)=R_u n_{\supp(h)}(y)\quad \text{for }\mathcal{H}^{n-1}\text{-a.e. }y\in\supp(h),
    \]
    and since $R_u$ preserves the measure $\mathcal{H}^{n-1}$, we get that for every Borel set $B\subset\Sp$, we have $\nu_{R_u h}(B)=\nu_h(R_u B)$. Hence $\nu_{R_u h}=(R_u)_{\#}\nu_h$. Therefore, for every $h\in\LC_n$, we get 
    \begin{equation}\label{eq:SA-measures-reflection}
(\mu_{R_u h},\nu_{R_u h})=((R_u)_{\#}\mu_h, (R_u)_{\#}\nu_h).
    \end{equation}

    Applying \eqref{eq:SA-measures-reflection} to $h=B_u^\sharp f$ where $f\in\LC_n$, we obtain
    \[
\mu_{R_u(B_u^\sharp f)}=(R_u)_{\#}\mu_{B_u^\sharp f}=\frac{1}{2}\left((R_u)_{\#}\mu_f+(R_u)_{\#}(R_u)_{\#}\mu_f\right).
    \]
    Since $R_u^2=\mathrm{Id}$, we have $(R_u)_{\#}(R_u)_{\#}\mu_f=\mu_f$, so
    \[
\mu_{R_u(B_u^\sharp f)}=\frac{1}{2}\left((R_u)_{\#}\mu_f+\mu_f\right)=\mu_{B_u^\sharp f}.
    \]
    Following along the same lines, we also obtain
    \[
\nu_{R_u(B_u^\sharp f)}=(R_u)_{\#}\nu_{B_u^\sharp f}=\frac{1}{2}\left((R_u)_{\#}\nu_f+\nu_f\right)=\nu_{B_u^\sharp f}.
    \]
    Therefore,
    \begin{equation}
        (\mu_{R_u(B_u^\sharp f)},\nu_{R_u(B_u^\sharp f)})=(\mu_{B_u^\sharp f},\nu_{B_u^\sharp f}).
    \end{equation}
Finally, by Lemma \ref{lem:minkowski-blaschke-sum}, since $B_u^\sharp f$ and $R_u(B_u^\sharp f)$ have the same pair of surface area measures, there exists $v\in\R^n$ such that $R_u(B_u^\sharp f)(x)=B_u^\sharp f(x+v)$ for all $x\in\R^n$. Equivalently, $R_u(B_u^\sharp f)=B_u^\sharp f$ up to translation.

(ii) Let $g=B_u^\sharp f$. By (i), $R_u g$ is a translate of $g$. Since translations do not change surface area measures, it follows that $(\mu_{R_u g},\nu_{R_u g})=(\mu_g,\nu_g)$. On the other hand, reflection pushes these measures forward, so $(\mu_{R_u g},\nu_{R_u g})=((R_u)_{\#}\mu_g,(R_u)_{\#}\nu_g)$. Hence $(R_u)_{\#}\mu_g=\mu_g$ and $(R_u)_{\#}\nu_g=\nu_g$. Thus by definition of the Blaschke sum, this implies
\[
(\mu_{B_u^\sharp g},\nu_{B_u^\sharp g})=\frac{1}{2}(\mu_g+(R_u)_{\#}\mu_g,\nu_g+(R_u)_{\#}\nu_g)=\frac{1}{2}(\mu_g+\mu_g,\nu_g+\nu_g)=(\mu_g,\nu_g).
\]
Therefore, $B_u^\sharp g$ and $g$ have the same surface area measures. By Lemma \ref{lem:minkowski-blaschke-sum}, this implies that $B_u^\sharp g$ and $g$ are equal up to a translation.

(iii) Assume that $f=R_u f$. By Proposition \ref{prop:Blaschke-operations} (parts (vii) and (vi)),
\[
B_u^\sharp f=\left(\tfrac{1}{2}\odot f\right)\sharp\left(\tfrac{1}{2}\odot R_u f\right)=\left(\tfrac{1}{2}\odot f\right)\sharp\left(\tfrac{1}{2}\odot  f\right)=\left(\tfrac{1}{2}+\tfrac{1}{2}\right)\odot f=1\odot f=f.
\]

    (iv) Since reflection preserves $|x|$ and total mass, we have
    \[
\int_{\R^n}|x|\,d\mu_{B_u^\sharp f}(x)=\int_{\R^n}|x|\,d\mu_f(x)\quad\text{and}\quad \nu_{B_u^\sharp f}(\Sp)=\nu_f(\Sp).
    \]
    Thus, by Remark \ref{rmk:SA} we get $W_1(B_u^\sharp f)=W_1(f)$.

    Part (v) follows from Proposition \ref{prop:Blaschke-operations}(viii) with $f_1=f$, $f_2=R_u f$ and $\lambda=1/2$.
\end{proof}

\begin{remark}
    Combining Proposition \ref{prop-Blaschke-symmetral}(iv) and Remark \ref{rmk:SA}, in the case $f=\mathbbm{1}_K$ we recover the identity $S(B_u K)=S(K)$.
\end{remark}

\begin{proposition}\label{prop:indicators}
    Let $K$ and $L$ be convex bodies in $\R^n$, and set $\lambda:=\frac{\vol_n(K)+\vol_n(L)}{\vol_n(K\# L)}$. Then, up to translation,
    \[
\mathbbm{1}_K\sharp \mathbbm{1}_L=\lambda^{-(n-1)}\mathbbm{1}_{\lambda(K\# L)}.
    \]
\end{proposition}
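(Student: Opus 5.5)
The plan is to compute the surface area measures $(\mu_g,\nu_g)$ of a scaled indicator $g=c\,\mathbbm{1}_M$, with $c>0$ and $M\in\mathcal K^n$. We then choose $c$ and $M$ so that these measures match $(\mu_{\mathbbm{1}_K}+\mu_{\mathbbm{1}_L},\nu_{\mathbbm{1}_K}+\nu_{\mathbbm{1}_L})$, and conclude by the uniqueness part of Theorem \ref{thm:rotem-falah-main} (equivalently, Lemma \ref{lem:minkowski-blaschke-sum}).

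\emph{Membership in $\LC_n$.} We have $g=e^{-\varphi}$ with $\varphi=-\log c$ on $M$ and $\varphi=+\infty$ off $M$. This $\varphi$ is proper, convex and lower semicontinuous (since $M$ is closed), coercive (since $M$ is bounded), and $\operatorname{int}\dom\varphi=\operatorname{int}M\neq\varnothing$. Hence $g\in\LC_n$.

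\emph{The measure $\mu_g$.} On $\operatorname{int}M$ the function $\varphi$ is constant, so $\nabla\varphi=0$ a.e. with respect to $g\,dx$. Therefore
\[
\mu_g=(\nabla\varphi)_\sharp(g\,dx)=c\,\vol_n(M)\,\delta_o,
\]
exactly as in Remark \ref{rmk:SA} for $c=1$.

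\emph{The measure $\nu_g$.} Here $\supp(g)=M$. Since $g$ is upper semicontinuous and equals $c$ on $M$, it equals $c$ on $\partial M$. Thus
\[
\nu_g=(n_M)_\sharp\bigl(c\,\mathcal H^{n-1}|_{\partial M}\bigr)=c\,S_M .
\]
In particular, $\mu_{\mathbbm{1}_K}+\mu_{\mathbbm{1}_L}=(\vol_n(K)+\vol_n(L))\delta_o$ and $\nu_{\mathbbm{1}_K}+\nu_{\mathbbm{1}_L}=S_K+S_L=S_{K\#L}$.

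\emph{Matching the measures.} Set $M=\lambda(K\#L)$ and $c=\lambda^{-(n-1)}$. By homogeneity of surface area measures, $S_M=\lambda^{n-1}S_{K\#L}$, so $\nu_g=S_{K\#L}$. For the other measure,
\[
c\,\vol_n(M)=\lambda^{-(n-1)}\lambda^n\vol_n(K\#L)=\lambda\vol_n(K\#L)=\vol_n(K)+\vol_n(L)
\]
by the choice of $\lambda$, so $\mu_g$ matches as well. The uniqueness statement of Theorem \ref{thm:rotem-falah-main} then gives $\mathbbm{1}_K\sharp\mathbbm{1}_L=g$ up to translation.

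The main point needing care is the computation of $\nu_g$ for the scaled indicator. One must check that the density $f$ restricted to $\partial\supp f$ in Theorem \ref{thm:rotem-falah-variation} is the constant $c$ there, which uses upper semicontinuity and the closedness of $M$. One must also check that the pushforward under the Gauss map of $\mathcal H^{n-1}|_{\partial M}$ is the classical surface area measure $S_M$. The rest is bookkeeping with the homogeneities $S_{\lambda M}=\lambda^{n-1}S_M$ and $\vol_n(\lambda M)=\lambda^n\vol_n(M)$.
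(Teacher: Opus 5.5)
Your proposal is correct and follows the same approach as the paper. Both compute the surface area pair of a scaled indicator $c\,\mathbbm{1}_M$ as $(c\,\vol_n(M)\delta_o,\,c\,S_M)$, take $M=\lambda(K\#L)$ and $c=\lambda^{-(n-1)}$ to match $((\vol_n(K)+\vol_n(L))\delta_o,\,S_K+S_L)$, and conclude by uniqueness in the functional Minkowski problem; your additional checks (that $c\,\mathbbm{1}_M\in\LC_n$, and that the boundary density equals $c$ by upper semicontinuity) spell out details the paper leaves implicit.
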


\begin{proof}
Let $f=e^{-\phi_K}=\mathbbm{1}_K$ where $K$ is a convex body in $\R^n$. Then $\phi_K$ is the (convex) characteristic function of $K$, defined by $\phi_K(x)=0$ if $x\in K$, and $\phi_K(x)=+\infty$ if $x\not\in K$. Hence $\nabla\phi_K=0$ a.e. on $K$, and
\[
\mu_{\mathbbm{1}_K}=(\nabla\phi)_\#(\mathbbm{1}_K\,dx)=\vol_n(K)\delta_o,
\]
where $\delta_o$ is the point mass distribution at $o$.

Moreover, $\supp(\mathbbm{1}_K)=K$ and $\mathbbm{1}_K\equiv 1$ on $\partial K$, so
\[
\nu_{\mathbbm{1}_K}=(n_K)_\#(\mathcal{H}^{n-1}|_{\partial K})=S_K
\]
is the usual surface area measure on $K$. Therefore,
\begin{equation}\label{eq:indicator-SA-measures}
(\mu_{\mathbbm{1}_K\sharp\mathbbm{1}_L},\nu_{\mathbbm{1}_K\sharp\mathbbm{1}_L})=((\vol_n(K)+\vol_n(L))\delta_o,S_K+S_L).
\end{equation}

Now let $\lambda$ be defined as in the statement of the proposition, and set $c:=\lambda^{-(n-1)}$. Consider the log-concave function $h:=c\mathbbm{1}_{\lambda(K\# L)}$. Its surface area measures are:
\begin{equation}
\begin{aligned}\label{eq:SA-measures-h}
       \mu_h&=c\vol_n(\lambda(K\# L))\delta_o=(\vol_n(K)+\vol_n(L))\delta_o,\\ \nu_h&=cS_{\lambda(K\# L)}=c\lambda^{n-1}S_{K\# L}= S_K+S_L.
\end{aligned}
\end{equation}
From \eqref{eq:indicator-SA-measures} and \eqref{eq:SA-measures-h}, we derive that $\mathbbm{1}_K\sharp\mathbbm{1}_L$  and $h$ have the same surface area measures. Thus by Theorem \ref{thm:rotem-falah-main}, the functions agree up to translations. 
\end{proof}

The next proposition shows that the functional Blaschke addition is not monotone, i.e., the condition $f\leq g$ does not guarantee that there is a choice of representatives of the translation classes of $f\sharp h$ and $g\sharp h$ for which $f\sharp h\leq g\sharp h$ pointwise. In what follows, given a function $f\in\LC_n$, we let $[f]$ denote the equivalence class of all translations of $f$, i.e.,
\[
[f]:=\{g\in\LC_n:\,\exists v\in\R^n\text{ s.t. }g(x)=f(x+v)\text{ for all }x\in\R^n\}.
\]
Obviously, $f\in[f]$ so $[f]$ is always nonempty.

\begin{proposition}\label{prop:nonmonotonicity}
There exist $f,g,h\in\LC_n$ with $f\leq g$ such that for no representatives $F\in[f\sharp h]$ and $G\in[g\sharp h]$ do we have $F\leq G$.
\end{proposition}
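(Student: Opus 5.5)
The plan is to work with indicator functions and reduce everything to explicit convex bodies via Proposition \ref{prop:indicators}. Take $h=\mathbbm{1}_L$, $f=\mathbbm{1}_K$ and $g=\mathbbm{1}_M$ with $K\subseteq M$, so that $f\leq g$. Proposition \ref{prop:indicators} gives, up to translation,
\[
f\sharp h=\lambda_1^{-(n-1)}\mathbbm{1}_{\lambda_1(K\# L)},\qquad g\sharp h=\lambda_2^{-(n-1)}\mathbbm{1}_{\lambda_2(M\# L)},
\]
where $\lambda_1=\frac{\vol_n(K)+\vol_n(L)}{\vol_n(K\# L)}$ and $\lambda_2=\frac{\vol_n(M)+\vol_n(L)}{\vol_n(M\# L)}$. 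Every representative $F\in[f\sharp h]$ is a constant multiple of the indicator of a translate of $\lambda_1(K\# L)$, and similarly for $G$. Hence $F\leq G$ for some representatives holds if and only if two conditions hold together: the height inequality $\lambda_1^{-(n-1)}\leq \lambda_2^{-(n-1)}$, i.e. $\lambda_1\geq\lambda_2$, and the containment of some translate of $\lambda_1(K\# L)$ in $\lambda_2(M\# L)$.

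It therefore suffices to make one of these two conditions fail. The cleanest option is to break the height inequality, that is, to arrange $\lambda_1<\lambda_2$. If that is awkward, the fallback is to break containment by a volume argument: choose the bodies so that $\vol_n(\lambda_1(K\#L))>\vol_n(\lambda_2(M\#L))$.

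For concreteness, work with balls in the first instance: $K=rB_2^n$, $M=RB_2^n$ with $r\le R$, and $L=B_2^n$. Then $K\# L=(r^{n-1}+1)^{1/(n-1)}B_2^n$ and $\lambda_1=\frac{\omega_n(r^n+1)}{\omega_n(r^{n-1}+1)^{n/(n-1)}}$, with the analogous formula for $\lambda_2$. Everything reduces to the one-variable function
\[
\rho(t)=\frac{t^n+1}{(t^{n-1}+1)^{n/(n-1)}},
\]
and what is needed is a pair $r<R$ with $\rho(r)<\rho(R)$. Note that $\rho(1)=2^{-1/(n-1)}<1$ and $\rho(t)\to1$ as $t\to\infty$. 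So taking $r=1$ and $R$ large gives $\lambda_1<\lambda_2$, which means $F$ is strictly taller than $G$ on its support. Since $F$ and $G$ are both positive constants on nonempty sets, no translation can give $F\le G$: at any point of $\supp F$ either $G=0$ or $G$ is a smaller constant.

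The only step needing care is this monotonicity computation, namely checking that $\rho(1)<\lim_{t\to\infty}\rho(t)$, which follows from the explicit values above. One should also note, as a check, that $\rho<1$ everywhere, which follows from the classical Kneser--Süss/Minkowski relation between $\vol(K\#L)$ and $\vol(K)+\vol(L)$. If $n=1$ causes degeneracy, restrict to $n\ge2$, as is implicit in the paper's use of Blaschke sums.
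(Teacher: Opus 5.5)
Your proposal is correct and follows essentially the paper's route. The paper also takes $f=h=\mathbbm{1}_{B_2^n}$ and $g$ the indicator of a larger ball, uses Proposition~\ref{prop:indicators} to get $f\sharp h=2\mathbbm{1}_{B_2^n}$, and rules out $F\le G$ by comparing maximal heights. The only difference is how the height gap is shown: the paper fixes $R=2$ and proves $\lambda_2^{-(n-1)}<2$ via the convexity bound $(a+b)^p\le 2^{p-1}(a^p+b^p)$, which in fact shows $\rho$ has its strict minimum at $t=1$, so any $R\neq 1$ works; you instead take $R$ large and use $\rho(R)\to 1>2^{-1/(n-1)}$.
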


\begin{proof}
Let $f=h=\mathbbm{1}_{B_2^n}$ and $g=\mathbbm{1}_{2B_2^n}$. Note that $f,g,h\in\LC_n$, and $f\leq g$ since $B_2^n\subset 2B_2^n$. Since $S_{rB_2^n}=r^{n-1}S_{B_2^n}$ for every $r>0$, we have $rB_2^n\# sB_2^n=(r^{n-1}+s^{n-1})^{\frac{1}{n-1}}B_2^n$ for all $r,s>0$. Hence $B_2^n\# B_2^n=2^{\frac{1}{n-1}}B_2^n$ and $2B_2^n\# B_2^n=(2^{n-1}+1)^{\frac{1}{n-1}}B_2^n$. Now we apply Proposition \ref{prop:indicators}. For $f\sharp h=\mathbbm{1}_{B_2^n}\sharp\mathbbm{1}_{B_2^n}$ we obtain
\[
\lambda_1:=\frac{\vol_n(B_2^n)+\vol_n(B_2^n)}{\vol_n(B_2^n\# B_2^n)}=\frac{2\vol_n(B_2^n)}{2^{\frac{n}{n-1}}\vol_n(B_2^n)}=2^{-\frac{1}{n-1}}.
\]
Thus 
\[
f\sharp h=\lambda_1^{-(n-1)}\mathbbm{1}_{\lambda_1(B_2^n\# B_2^n)}=2\mathbbm{1}_{B_2^n}.
\]

For $g\sharp h=\mathbbm{1}_{2B_2^n}\sharp \mathbbm{1}_{B_2^n}$, we obtain
\[
\lambda_2:=\frac{\vol_n(2B_2^n)+\vol_n(B_2^n)}{\vol_n(2B_2^n\# B_2^n)}=\frac{(2^n+1)\vol_n(B_2^n)}{(2^{n-1}+1)^{\frac{n}{n-1}}\vol_n(B_2^n)}=\frac{2^n+1}{(2^{n-1}+1)^{\frac{n}{n-1}}}.
\]
Hence
\[
g\sharp h=\lambda_2^{-(n-1)}\mathbbm{1}_{\lambda_2(2B_2^n\# B_2^n)}=c_n\mathbbm{1}_{r_n B_2^n}
\]
where $c_n:=\frac{(2^{n-1}+1)^n}{(2^n+1)^{n-1}}$ and $r_n:=\frac{2^n+1}{2^{n-1}+1}$.

By convexity, for fixed $p>1$ and all $a,b\geq 0$, we have $(a+b)^p\leq 2^{p-1}(a^p+b^p)$ with equality if and only if $a=b$. Using this with $a=2^{n-1}$, $b=1$ and $p=\frac{n}{n-1}>1$, we get $(2^{n-1}+1)^{\frac{n}{n-1}}<2^{\frac{1}{n-1}}(2^n+1)$; the inequality here is strict because $a\neq b$. Thus, $c_n<2$. Now let $F\in[f\sharp h]$ and $G\in[g\sharp h]$. Since translations do not change the supremum, we have $\sup_{\R^n}F=2$ and $\sup_{\R^n}G<2$. Thus, $F\leq G$ is impossible, because it would imply $2=\sup F\leq\sup G<2$, a contradiction. Therefore, for $f=\mathbbm{1}_{B_2^n}, g=\mathbbm{1}_{2B_2^n}, h=\mathbbm{1}_{B_2^n}\in\LC_n$ with $f\leq g$, there do not exist representatives $F\in[f\sharp h]$ and $G\in[g\sharp h]$ such that $F\leq G$.
\end{proof}

\begin{remark}
    Proposition \ref{prop:nonmonotonicity} shows that Blaschke addition is not a generalized supremal convolution as defined in \cite{MMRR26}. In particular, the transference principle therein cannot be used to deduce Theorem~\ref{thm:entropy-concave} or the geometric inequality that it implies.
\end{remark}

\subsection{Relation to the ``functional" projection body/asymmetric LYZ body}

In view of the intertwining property \eqref{eqn:projection-Blaschke-relation-classical} of the projection body and Blaschke sum operations of convex bodies, it is natural to seek a functional analogue. As we will show in this section, the \emph{asymmetric LYZ body} is a natural choice. The following definition is from \cite{Langharst-Marin-Ulivelli} (see also \cite{Fang-Zhou-Advances}).

\begin{definition}[Asymmetric LYZ body]
    Let $f\in\LC_n$. Then the \emph{asymmetric LYZ body} $\langle f\rangle\subset\R^n$ of $f$ is the unique convex body with center of mass at the origin with the following property: for every 1-homogeneous function $F:\R^n\setminus\{o\}\to\R$,
    \begin{equation}\label{eq:LYZ-body}
        \int_{\partial\langle f\rangle}F(n_{\langle f\rangle}(y))\,d\mathcal{H}^{n-1}(y)=\int_{\R^n}F(-\nabla f(x))\,dx+\int_{\partial\supp(f)}F(n_{\supp(f)}(y))f(y)\,d\mathcal{H}^{n-1}(y).
    \end{equation}
\end{definition}

As shown in \cite{Langharst-Marin-Ulivelli}, when $F=\frac{1}{2}|\langle\theta,\cdot\rangle|$, applying the polar projection body operator $\Pi^\circ$, we get that $\Pi^\circ\langle f\rangle$ is the unit ball of the norm 
\begin{equation}\label{eq:norm}
\|v\|_{\Pi^\circ\langle f\rangle}
=\frac12\int_{\mathbb R^n}|\langle \nabla f(x),v\rangle|\,dx
+\frac12\int_{\partial\operatorname{supp}(f)}
|\langle n_{\operatorname{supp}(f)}(y),v\rangle|f(y)\,d\mathcal H^{n-1}(y). 
\end{equation}

To see how this connects to the Blaschke sum, let $f\in\LC_n$, and for $u\in\Sp$, set $g_u=\mathbbm{1}_{[-u,u]}$. Consider the ``functional" projection body $\Pi_\sharp^b f$ defined by its support function:
\begin{equation}\label{eq:functional-proj-body}
    h_{\Pi_\sharp^b f}(u):=\frac{1}{2}\delta(f,\mathbbm{1}_{[-u,u]}),\qquad u\in\Sp.
\end{equation}
Using Theorem \ref{thm:rotem-falah-variation} with $g=g_u$, and the fact that for an indicator of a segment we have $h_{g_u}(x)=h_{\supp(g_u)}(x)=|\langle x,u\rangle|$, definition \eqref{eq:functional-proj-body} becomes
\begin{equation}\label{fn-proj-body-2}
    h_{\Pi_\sharp^b f}(u)=\frac{1}{2}\int_{\R^n}|\langle z,u\rangle|\,d\mu_f(z)+\frac{1}{2}\int_{\Sp}|\langle \theta,u\rangle|\,d\nu_f(\theta).
\end{equation}
This is finite, even, positively 1-homogeneous, and convex, so it is sublinear, and therefore it is the support function of an origin-symmetric convex body. Using $\mu_f=(\nabla\phi)_\#(f\,dx)$ and $\nu_f=(n_{\supp(f)})_\#(f\,\mathcal{H}^{n-1}|_{\partial\supp(f)})$, this becomes 
\begin{equation}\label{fn-proj-body-3}
    h_{\Pi_\sharp^b f}(u)=\frac{1}{2}\int_{\R^n}|\langle \nabla\phi(x),u\rangle|f(x)\,dx+\frac{1}{2}\int_{\partial\supp(f)}|\langle n_{\supp(f)}(y),u\rangle|f(y)\,d\mathcal{H}^{n-1}(y).
\end{equation}
Therefore, up to notation and with the choice of $F$ above, the projection body defined in \eqref{eq:functional-proj-body} is a special case of the  asymmetric LYZ body:
\begin{equation}
    \Pi_\sharp^b f=\Pi\langle f\rangle.
\end{equation}
We use the notation $\Pi_\sharp^b$ to highlight that the specific function $F=\frac{1}{2}|\langle\theta,\cdot\rangle|$ has been chosen.

    One may ask: Why not just define a log-concave function $\Pi_\sharp f$ whose support function $h_f=\mathcal{L}(-\log f)$ is given by $\frac{1}{2}\delta(f,\mathbbm{1}_{[-u,u]})$ (or, equivalently, by \eqref{fn-proj-body-2})? This support function is bijective and additive \cite{Rotem2013} with respect to the Asplund sum, i.e., $h_{f_1\star f_2}=h_{f_1}+h_{f_2}$ for all $f_1,f_2\in\LC_n$, and it extends the classical body support function in the sense that $h_{\mathbbm{1}_K}=h_K$. So if we want a function $\Pi_\sharp f$ whose support function is $h_{\Pi_\sharp f}=h_{\Pi_\sharp^b f}$, then by the injectivity of the support map (see \cite{Rotem2013}), there is exactly one such log-concave function. Since $h_{\Pi_\sharp^b f}$ is the support function of a convex body, the unique inverse is 
\[
\Pi_\sharp f=\mathbbm{1}_{\Pi_\sharp^b f}=\mathbbm{1}_{\Pi\langle f\rangle}.
\]

\noindent The function-valued operator is then given in
\begin{definition}\label{def:fn-proj-body}
    Let $f\in\LC_n$. 
    \begin{itemize}
        \item[(i)] The \emph{functional projection body} $\Pi_\sharp f$ of $f$ is defined by
    \[
\Pi_\sharp f:=\mathbbm{1}_{\Pi_\sharp^b f}.
    \]

        \item[(ii)] The \emph{functional polar projection body} $\Pi_\sharp^\circ f$ of $f$ is defined by
    \[
\Pi_\sharp^\circ f:=\exp\bigl(-h_{\Pi_\sharp^b f}\bigr)=\exp\left(-h_{\Pi\langle f\rangle}\right).
\]
    \end{itemize}
\end{definition}


The polar definition is justified as follows. If $g=e^{-\varphi}\in\LC_n$, then $h_g=\mathcal{L}(-\log g)=\mathcal{L}\varphi$ and the polar function is $g^\circ=e^{-h_g}$. With $g=\Pi_\sharp f$, we have $h_g=h_{\Pi_\sharp^b f}$, so $g^\circ=\exp(-h_{\Pi_\sharp^b f})$.

These projection body functions have been considered before, see, e.g., \cite{Langharst-Marin-Ulivelli} and the references therein. In the next lemma, we show that the functional projection body is well-defined. 

\begin{proposition}\label{prop:proj-body-LCn}
     For every $f\in\LC_n$, we have $\Pi_\sharp f\in\LC_n^e$ and $\Pi_\sharp^\circ f\in\LC_n^e$.
\end{proposition}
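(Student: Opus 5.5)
The key step is to show that $\Pi_\sharp^b f$ is a genuine origin-symmetric convex body, i.e. compact, convex, with $o$ in its interior. Everything else follows formally from that.

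\textbf{Step 1: $h:=h_{\Pi_\sharp^b f}$ is a finite, even, sublinear function.} By \eqref{fn-proj-body-2}, $h$ is given by integrals of $|\langle z,u\rangle|$ against $\mu_f$ and $\nu_f$, extended $1$-homogeneously to $u\in\R^n$. Finiteness follows from Theorem \ref{thm:rotem-falah-main}: $\mu_f$ has finite first moment, and $\nu_f$ is a finite measure. Hence $h(u)\le \tfrac12\int|z|\,d\mu_f+\tfrac12\nu_f(\Sp)<\infty$. Evenness, positive homogeneity and convexity are inherited pointwise from $u\mapsto|\langle z,u\rangle|$. Therefore $h$ is the support function of a compact convex set $\Pi_\sharp^b f$ that is symmetric about the origin.

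\textbf{Step 2: $h(u)>0$ for every $u\in\Sp$.} This is where I expect the only real content. Suppose $h(u)=0$. Then $\langle z,u\rangle=0$ for $\mu_f$-a.e.\ $z$, and $\langle\theta,u\rangle=0$ for $\nu_f$-a.e.\ $\theta$. So both $\mu_f$ and $\nu_f$ are supported on the hyperplane $u^\perp$, contradicting condition (iii) of Theorem \ref{thm:rotem-falah-main}. By continuity and compactness of $\Sp$, $\min_{\Sp}h=:r>0$. Hence $rB_2^n\subset\Pi_\sharp^b f$, and $\Pi_\sharp^b f\in\mathcal K^n_{\rm cs}$ has nonempty interior containing $o$.

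\textbf{Step 3: $\Pi_\sharp f$.} Let $K=\Pi_\sharp^b f$. Since $K\in\mathcal K^n_{\rm cs}$, we have $\mathbbm 1_K=e^{-\phi_K}$, where $\phi_K$ is the convex characteristic function of $K$. This $\phi_K$ is proper and lower semicontinuous because $K$ is closed. It is coercive because $K$ is bounded, so $\phi_K=+\infty$ outside a ball. Its domain $K$ has nonempty interior. Thus $\Pi_\sharp f\in\LC_n$. It is even because $K=-K$.

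\textbf{Step 4: $\Pi_\sharp^\circ f$.} We have $\Pi_\sharp^\circ f=e^{-h_K}$, where $h_K$ is finite, convex and continuous on $\R^n$. So it is proper, lower semicontinuous, and $\dom h_K=\R^n$. Coercivity comes from Step 2: $h_K(x)\ge r|x|\to\infty$. Alternatively, one can use the criterion from \cite{Rockafellar-Wets}: $\mathcal L h_K=\phi_K$ has $o\in\operatorname{int}K$. Evenness follows from $h_K(-x)=h_{-K}(x)=h_K(x)$. Hence $\Pi_\sharp^\circ f\in\LC_n^e$.
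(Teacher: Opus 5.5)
Your proposal is correct and follows essentially the same route as the paper. You show that $h_{\Pi_\sharp^b f}$ is finite, even and sublinear from \eqref{fn-proj-body-2}. You then get positivity on $\Sp$ from condition (iii) of Theorem \ref{thm:rotem-falah-main}, which gives $h_{\Pi_\sharp^b f}(x)\geq c|x|$ and hence coercivity of $\Pi_\sharp^\circ f$.

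The differences are minor. The paper proves continuity of $h_{\Pi_\sharp^b f}$ by an explicit Lipschitz estimate, where you cite continuity of finite convex functions. You also use the positivity step to secure $\operatorname{int}\Pi_\sharp^b f\neq\varnothing$ before concluding $\Pi_\sharp f\in\LC_n$, which is slightly more careful than the paper's ordering.
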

\noindent For the reader's convenience, the proof of Proposition \ref{prop:proj-body-LCn} is given in the Appendix.
\vspace{2mm}

Next, we investigate how the Blaschke sum relates to the functional projection body and the functional polar projection body. In particular, under the Blaschke symmetrization $B_u^\sharp$, the polar projection body function undergoes a  geometric-mean symmetrization.

\begin{proposition}
    Let $f_1,f_2,f\in\LC_n$, $\lambda>0$ and $u\in\Sp$. Then the following properties hold:
    \begin{itemize}
        \item[(i)] $\Pi_\sharp^b(f_1\sharp f_2)=\Pi_\sharp^b f_1+\Pi_\sharp^b f_2$, which is equivalent to
        \[
        \Pi_\sharp(f_1\sharp f_2)=\Pi_\sharp f_1\star\Pi_\sharp f_2
        \qquad\text{and}\qquad
        \Pi_\sharp^\circ(f_1\sharp f_2)=\Pi_\sharp^\circ f_1\cdot\Pi_\sharp^\circ f_2;
        \]

        \item[(ii)] $\Pi_\sharp^b(\lambda\odot f)=\lambda\,\Pi_\sharp^b f$, which is equivalent to
        \[
        \Pi_\sharp(\lambda\odot f)=\lambda\cdot\Pi_\sharp f
        \qquad\text{and}\qquad
        \Pi_\sharp^\circ(\lambda\odot f)=(\Pi_\sharp^\circ f)^\lambda;
        \]

        \item[(iii)] $\Pi_\sharp^b(B_u^\sharp f)=\tau_u(\Pi_\sharp^b f)$, which is equivalent to
        \[
        \Pi_\sharp(B_u^\sharp f)=\tau_u(\Pi_\sharp f)
        \qquad\text{and}\qquad
        \Pi_\sharp^\circ(B_u^\sharp f)=\sqrt{\Pi_\sharp^\circ f\cdot\Pi_\sharp^\circ(R_u f)};
        \]

        \item[(iv)] $J(\Pi_\sharp^\circ f)=n!\vol_n((\Pi\langle f\rangle)^\circ)$.
    \end{itemize}
\end{proposition}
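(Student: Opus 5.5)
The plan is to work throughout with the support function representation \eqref{fn-proj-body-2}. This formula expresses $h_{\Pi_\sharp^b f}$ as a linear functional of the pair $(\mu_f,\nu_f)$. Parts (i)–(iii) at the body level then follow from the linearity of that functional together with the definitions of $\sharp$, $\odot$ and $B_u^\sharp$. The functional statements follow by translating Minkowski operations on bodies into operations on indicators and on $\exp(-h)$.

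For (i), insert $(\mu_{f_1\sharp f_2},\nu_{f_1\sharp f_2})=(\mu_{f_1}+\mu_{f_2},\nu_{f_1}+\nu_{f_2})$ into \eqref{fn-proj-body-2}. This gives $h_{\Pi_\sharp^b(f_1\sharp f_2)}=h_{\Pi_\sharp^b f_1}+h_{\Pi_\sharp^b f_2}$. Since support functions add under Minkowski sums and determine a body uniquely, we get $\Pi_\sharp^b(f_1\sharp f_2)=\Pi_\sharp^b f_1+\Pi_\sharp^b f_2$. Equivalently, one can quote Proposition~\ref{prop:Blaschke-operations}(ii) with $g=\mathbbm{1}_{[-u,u]}$.

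The right-hand side is translation invariant, because the measures are. This is consistent with $f_1\sharp f_2$ being defined only up to translation.

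For the functional reformulations I use two standard facts for $K,L\in\mathcal K^n$ and $\lambda>0$:
\begin{itemize}
\item[(a)] $\mathbbm{1}_K\star\mathbbm{1}_L=\mathbbm{1}_{K+L}$, since the infimal convolution of convex indicators is the indicator of the Minkowski sum;
\item[(b)] $\lambda\cdot\mathbbm{1}_K=\mathbbm{1}_{\lambda K}$, since $\lambda\cdot 0=0$ and $\lambda\cdot\infty=\infty$.
\end{itemize}
Moreover, $\exp(-h_{K+L})=\exp(-h_K)\exp(-h_L)$ and $\exp(-h_{\lambda K})=\exp(-h_K)^\lambda$. Each equivalence then holds because all three maps $K\mapsto\Pi_\sharp^b$, $K\mapsto\mathbbm{1}_K$ and $K\mapsto e^{-h_K}$ are injective.

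Part (ii) is the same computation with $(\lambda\mu_f,\lambda\nu_f)$, which gives $h_{\Pi_\sharp^b(\lambda\odot f)}=\lambda h_{\Pi_\sharp^b f}$.

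For (iii), combine (i) and (ii):
\[
\Pi_\sharp^b(B_u^\sharp f)=\tfrac12\Pi_\sharp^b f+\tfrac12\Pi_\sharp^b(R_uf).
\]
It remains to show $\Pi_\sharp^b(R_uf)=R_u\Pi_\sharp^b f$. By \eqref{eq:SA-measures-reflection}, $(\mu_{R_uf},\nu_{R_uf})$ are the pushforwards under $R_u$. Since $R_u$ is a self-adjoint isometry, $|\langle R_uz,v\rangle|=|\langle z,R_uv\rangle|$. Hence $h_{\Pi_\sharp^b(R_uf)}(v)=h_{\Pi_\sharp^b f}(R_uv)=h_{R_u\Pi_\sharp^b f}(v)$, and so
\[
\Pi_\sharp^b(B_u^\sharp f)=\tau_u(\Pi_\sharp^b f).
\]

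For the functional versions I interpret $\tau_u$ on functions as $\tfrac12\cdot g\star\tfrac12\cdot R_ug$. Then (a), (b) and $R_u\mathbbm{1}_K=\mathbbm{1}_{R_uK}$ give the indicator identity. The polar identity is $\exp(-\tfrac12 h_A-\tfrac12 h_B)=\sqrt{e^{-h_A}e^{-h_B}}$ with $A=\Pi_\sharp^b f$ and $B=\Pi_\sharp^b(R_uf)$.

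For (iv), write $K=\Pi\langle f\rangle$. Since $K$ is origin-symmetric with nonempty interior (Proposition~\ref{prop:proj-body-LCn}), $h_K$ is the norm $\|\cdot\|_{K^\circ}$. The layer-cake formula, or polar coordinates, then gives
\[
\int_{\R^n}e^{-\|x\|_{K^\circ}}\,dx=\int_0^\infty e^{-t}\,\vol_n(tK^\circ)\,dt=\vol_n(K^\circ)\int_0^\infty t^n e^{-t}\,dt.
\]
Since $\int_0^\infty t^ne^{-t}\,dt=n!$, this equals $n!\vol_n(K^\circ)$. Equivalently, after the substitution $e^{-t}=s$ and using $\lev_{\ge s}=\log(1/s)K^\circ$, one gets $\int_0^1\log(1/s)^n\,ds=n!$.

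I expect the only genuinely delicate point to be the reflection identity $\Pi_\sharp^b(R_uf)=R_u\Pi_\sharp^b f$. It rests on \eqref{eq:SA-measures-reflection}, which was already established in the proof of Proposition~\ref{prop-Blaschke-symmetral}. A smaller point is to make the meaning of $\tau_u$ on functions explicit.
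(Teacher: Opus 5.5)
Your proposal is correct and follows essentially the same route as the paper: linearity of $h_{\Pi_\sharp^b f}$ in the surface area pair (equivalently Proposition~\ref{prop:Blaschke-operations}(ii),(iii) with $g=\mathbbm{1}_{[-u,u]}$), the reflection identity $\Pi_\sharp^b(R_uf)=R_u\Pi_\sharp^b f$, and the formula $\int_{\R^n}e^{-h_K}\,dx=n!\vol_n(K^\circ)$. The only differences are minor. You justify the reflection step explicitly via \eqref{eq:SA-measures-reflection}, where the paper leaves it implicit. You also derive the $n!$ identity by a layer-cake computation instead of citing it.
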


\begin{proof}
    For (i), by definition \eqref{eq:functional-proj-body} and Proposition \ref{prop:Blaschke-operations}(ii), for every $u\in\Sp$ we have
    \begin{align*}
        h_{\Pi_\sharp^b(f_1\sharp f_2)}(u)
        &=\frac{1}{2}\delta(f_1\sharp f_2,\mathbbm{1}_{[-u,u]})\\
        &=\frac{1}{2}\delta(f_1,\mathbbm{1}_{[-u,u]})+\frac{1}{2}\delta(f_2,\mathbbm{1}_{[-u,u]})\\
        &=h_{\Pi_\sharp^b f_1}(u)+h_{\Pi_\sharp^b f_2}(u)
        =h_{\Pi_\sharp^b f_1+\Pi_\sharp^b f_2}(u).
    \end{align*}
    Since support functions uniquely determine convex bodies, this proves
    \[
    \Pi_\sharp^b(f_1\sharp f_2)=\Pi_\sharp^b f_1+\Pi_\sharp^b f_2.
    \]
    Therefore,
    \[
    \Pi_\sharp(f_1\sharp f_2)
    =\mathbbm{1}_{\Pi_\sharp^b(f_1\sharp f_2)}
    =\mathbbm{1}_{\Pi_\sharp^b f_1+\Pi_\sharp^b f_2}
    =\mathbbm{1}_{\Pi_\sharp^b f_1}\star \mathbbm{1}_{\Pi_\sharp^b f_2}
    =\Pi_\sharp f_1\star\Pi_\sharp f_2.
    \]
    Likewise,
    \begin{align*}
        \Pi_\sharp^\circ(f_1\sharp f_2)
        &=\exp\left(-h_{\Pi_\sharp^b(f_1\sharp f_2)}\right)
        =\exp\left(-(h_{\Pi_\sharp^b f_1}+h_{\Pi_\sharp^b f_2})\right)
        =\Pi_\sharp^\circ f_1\cdot\Pi_\sharp^\circ f_2.
    \end{align*}

    For (ii), again by definition \eqref{eq:functional-proj-body} and Proposition \ref{prop:Blaschke-operations}(iii) we obtain
    \[
    h_{\Pi_\sharp^b(\lambda\odot f)}(u)
    =\frac12\delta(\lambda\odot f,\mathbbm{1}_{[-u,u]})
    =\lambda h_{\Pi_\sharp^b f}(u),
    \]
    so $\Pi_\sharp^b(\lambda\odot f)=\lambda\,\Pi_\sharp^b f$. 
    Hence,
    \[
    \Pi_\sharp(\lambda\odot f)
    =\mathbbm{1}_{\lambda\,\Pi_\sharp^b f}
    =\lambda\cdot\mathbbm{1}_{\Pi_\sharp^b f}
    =\lambda\cdot\Pi_\sharp f
    \]
    and
    \[
    \Pi_\sharp^\circ(\lambda\odot f)
    =\exp\left(-h_{\Pi_\sharp^b(\lambda\odot f)}\right)
    =\exp\left(-\lambda h_{\Pi_\sharp^b f}\right)
    =(\Pi_\sharp^\circ f)^\lambda.
    \]

    For (iii), by Definition \ref{def:fn-proj-body} together with (i) and (ii),
    \[
    \Pi_\sharp^b(B_u^\sharp f)
    =
    \Pi_\sharp^b\!\left(\frac12\odot f\sharp \frac12\odot R_u f\right)
    =
    \frac12\,\Pi_\sharp^b f+\frac12\,\Pi_\sharp^b(R_u f).
    \]
    Moreover, for every $v\in\Sp$,
    \begin{align*}
        h_{\Pi_\sharp^b(R_u f)}(v)
        &=\frac12\,\delta(R_u f,\mathbbm{1}_{[-v,v]})
        =\frac12\,\delta(f,\mathbbm{1}_{[-R_u v,R_u v]})
        =h_{\Pi_\sharp^b f}(R_u v)
        =h_{R_u\Pi_\sharp^b f}(v),
    \end{align*}
    whence
    \[
    \Pi_\sharp^b(R_u f)=R_u\Pi_\sharp^b f.
    \]
    Therefore,
    \[
    \Pi_\sharp^b(B_u^\sharp f)
    =
    \frac12\bigl(\Pi_\sharp^b f+R_u\Pi_\sharp^b f\bigr)
    =
    \tau_u(\Pi_\sharp^b f).
    \]
    Consequently,
    \[
    \Pi_\sharp(B_u^\sharp f)
    =
    \mathbbm{1}_{\tau_u(\Pi_\sharp^b f)}
    =
    \tau_u(\Pi_\sharp f),
    \]
    and
    \begin{align*}
        \Pi_\sharp^\circ(B_u^\sharp f)
        &=\exp\left(-h_{\Pi_\sharp^b(B_u^\sharp f)}\right)=\exp\left(-\left(\tfrac{1}{2} h_{\Pi_\sharp^b f}+\tfrac{1}{2} h_{\Pi_\sharp^b(R_u f)}\right)\right)
        =\sqrt{\Pi_\sharp^\circ f\cdot\Pi_\sharp^\circ(R_u f)}.
    \end{align*}

    For (iv), recall that for every  $K\in\mathcal{K}_o^n$,
    \[
    \int_{\R^n}e^{-h_K(x)}\,dx=n!\vol_n(K^\circ).
    \]
    Applying this identity with $K=\Pi_\sharp^b f=\Pi\langle f\rangle$ yields
    \[
    J(\Pi_\sharp^\circ f)=\int_{\R^n}\Pi_\sharp^\circ f(x)\,dx
    =n!\vol_n((\Pi\langle f\rangle)^\circ).
    \qedhere
    \]
\end{proof}

\begin{remark}
   Given $f_1,f_2\in \LC_n$ and $t\in(0,1)$, set $f_t:=(1-t)\odot f_1\sharp t\odot f_2$. Set also $K_i:=\Pi\langle f_i\rangle$, $i=1,2$, and $K_t:=\Pi\langle f_t\rangle$. Applying  the Brunn--Minkowski inequality to $\Pi\langle f_t\rangle$, we obtain
    \[
J(\Pi_\sharp f_t)^{1/n}\geq (1-t)J(\Pi_\sharp f_1)^{1/n}+tJ(\Pi_\sharp f_2)^{1/n}
    \]
with equality if and only if $K_1$ and $K_2$ are homothetic; since $K_1,K_2$ are origin-symmetric, this is equivalent to $K_2=cK_1$ for some $c>0$.
\end{remark}

\begin{remark}
    Given $K\in\mathcal{K}^n$ and $u\in\Sp$, the \emph{classical Minkowski symmetrization of $K$} is defined by
    \[
    \tau_u K=\frac{1}{2}K+\frac{1}{2}R_u K.
    \]
    Since $\Pi\langle R_u f\rangle=R_u(\Pi\langle f\rangle)$, property (iii) states
    \[
    \Pi\langle B_u^\sharp f\rangle=\tau_u\Pi\langle f\rangle.
    \]
    In particular, when $f=\mathbbm{1}_K$, we recover the classical identity
    \[
    \Pi(B_u K)=\tau_u(\Pi K).
    \]
\end{remark}

\begin{remark}
    The  projection body function $\Pi_\sharp f$ intertwines the functional Blaschke sum with sup-convolution:
    \[
    \Pi_\sharp(f_1\sharp f_2)=\Pi_\sharp f_1\star\Pi_\sharp f_2.
    \]
    Likewise,
    \[
    \Pi_\sharp(B_u^\sharp f)=\left(\tfrac{1}{2}\cdot \Pi_\sharp f\right)\star\left(\tfrac{1}{2}\cdot \Pi_\sharp(R_u f)\right)=\tau_u(\Pi_\sharp f),
    \]
    where
    \[
    \tau_u g:=\left(\tfrac{1}{2}\cdot g\right)\star\left(\tfrac{1}{2}\cdot R_u g\right)
    \]
    denotes the \emph{Minkowski symmetrization} of $g\in\LC_n$, see \cite{Hoehner-2023}.
\end{remark}

   \subsection{Convergence of successive Blaschke symmetrizations}

   In this section, we prove that  any log-concave function can be made to converge--using a sequence of iterated Blaschke symmetrizations--to a radial  function which we refer to as the mean Blaschke symmetral. 

   \begin{theorem}\label{thm:Blaschke=convergence-thm}
       Let $f\in\LC_n$. Then there exists an ordered sequence of directions $u_1,u_2,\ldots\in\Sp$ and translations $v_1,v_2,\ldots\in\R^n$ such that, if $f_k:=B_{u_k}^\sharp\cdots B_{u_1}^\sharp f$, then the sequence  $\widetilde{f}_k$ defined by
       \[
\tilde{f}_k(x):=f_k(x+v_k),\quad x\in\R^n,
       \]
       hypo-converges to a radial  function $f^\sharp\in\LC_n$ as $k\to\infty$. Moreover, $f^\sharp$ is characterized, up to translation, by its surface area measures
       \[
(\mu_{f^\sharp},\nu_{f^\sharp})=\left(\int_{\mathrm{O}(n)}U_{\#}\mu_f\,d\eta(U),\int_{\mathrm{O}(n)}U_\#\nu_f\,d\eta(U)\right)
       \]
       where $\eta$ is the normalized Haar measure on $\mathrm{O}(n)$. The sequence of directions may be chosen so that the subgroup of $\mathrm{O}(n)$ generated by the reflections $R_{u_k}$ is dense in $\mathrm{O}(n)$.
   \end{theorem}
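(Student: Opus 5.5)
The plan is to work entirely at the level of the surface area measures and then transfer the result back to functions through a stability (continuity) statement for the functional Minkowski problem of Theorem \ref{thm:rotem-falah-main}.

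\emph{Step 1: the measures of the iterates.} By \eqref{eq:SA-measures-reflection} and Definition \ref{def:mainDef},
\[
(\mu_{f_k},\nu_{f_k})=\Bigl(\sum_{\sigma}2^{-k}(R_\sigma)_\#\mu_f,\ \sum_\sigma 2^{-k}(R_\sigma)_\#\nu_f\Bigr),
\]
where $\sigma$ ranges over the $2^k$ subwords of $(u_1,\dots,u_k)$ and $R_\sigma$ is the corresponding product of reflections. In other words, $\mu_{f_k}=\int_{\mathrm O(n)}U_\#\mu_f\,d\eta_k(U)$, where $\eta_k=\tfrac12(\delta_{I}+\delta_{R_{u_k}})*\cdots*\tfrac12(\delta_I+\delta_{R_{u_1}})$ is a convolution of probability measures on $\mathrm O(n)$, and similarly for $\nu_{f_k}$.

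\emph{Step 2: choice of directions.} Pick $u_1,\dots,u_n$ generating the reflection group densely, for instance two reflections whose product is an irrational rotation in each coordinate plane together with coordinate reflections. Then repeat this finite block cyclically. For this random walk on the compact group $\mathrm O(n)$, the step measure of each block is not supported on a coset of a proper closed subgroup, since it contains $I$ and its support generates a dense subgroup. The Itô--Kawada/Kloss theorem therefore gives $\eta_k\to\eta$ weakly. Pairing with $\xi$ cosmically continuous, the function $U\mapsto\int\xi\circ U\,d\mu_f+\int\overline{\xi}\circ U\,d\nu_f$ is continuous on $\mathrm O(n)$. For continuity in the first integral I would use dominated convergence together with the first-moment bound and the uniform linear growth $|\xi(x)|\le C(1+|x|)$. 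Hence $(\mu_{f_k},\nu_{f_k})\to(\bar\mu,\bar\nu):=(\int U_\#\mu_f\,d\eta,\int U_\#\nu_f\,d\eta)$ cosmically. The limit pair is $\mathrm O(n)$-invariant. It satisfies conditions (i)--(iii) of Theorem \ref{thm:rotem-falah-main}: it has mass equal to $J(f)$, it is centered by invariance, and it is not supported on a hyperplane since it is rotation-invariant and nonzero. So there is $f^\sharp\in\LC_n$ with these measures. Invariance of the measures combined with uniqueness up to translation, applied as in the proof of Proposition \ref{prop-Blaschke-symmetral}(i), shows that $Uf^\sharp$ is a translate of $f^\sharp$ for every $U$. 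The map from $U$ to the translation vector defines an affine action of $\mathrm O(n)$; since $\mathrm O(n)$ is compact it has a fixed point. Translating $f^\sharp$ to that point makes it exactly radial.

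\emph{Step 3: from measure convergence to hypo-convergence.} This is the main obstacle. I would invoke, or reprove, the continuity part of the Falah--Rotem Minkowski theory: if $(\mu_{f_k},\nu_{f_k})\to(\mu,\nu)$ cosmically and the limit is admissible, then suitable translates of $f_k$ hypo-converge to the solution. The route I would attempt first is the following.
\begin{itemize}
\item[(a)] Obtain uniform bounds after translation. The total masses $J(f_k)=J(f)$ are constant by Proposition \ref{prop-Blaschke-symmetral}(v), and $W_1(f_k)=W_1(f)$ by part (iv). Then use a functional Minkowski-type inequality $\delta(f_k,g)\ge$ (something in terms of $J$ and the entropy) to control $\max f_k$ from above and below. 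Together with bounded $W_1$ and a fixed mass, this forces the translated $\widetilde f_k$ (centered, say, at the barycenter) to satisfy a uniform bound $\widetilde f_k\le Ae^{-a|x|}$.
\item[(b)] Apply the compactness theorem for hypo-convergence of log-concave functions with such uniform bounds. This yields subsequential limits $g\in\LC_n$, with $J(g)=J(f)>0$, so the limit is non-degenerate.
\item[(c)] Identify every subsequential limit $g$ through weak continuity of $f\mapsto(\mu_f,\nu_f)$ under hypo-convergence. This gives $(\mu_g,\nu_g)=(\bar\mu,\bar\nu)$, and uniqueness in Theorem \ref{thm:rotem-falah-main} then gives $g=f^\sharp$ up to translation.
\end{itemize}
With the barycenter normalization all subsequential limits coincide, so the whole sequence converges, and the $v_k$ are the barycenters of the $f_k$.

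The delicate point is (c) for the $\nu$-part, because boundary mass can migrate between $\mu$ and $\nu$ in the limit. This is exactly why cosmic convergence is the right topology. I would verify (c) by testing against the first variation $\delta(\cdot,g)$ for $g=\mathbbm 1_{K}\star e^{-\xi}$-type perturbations, relying on the continuity of $J(f\star t\cdot g)$ under hypo-convergence and the concavity in $t$ to pass derivatives to the limit.
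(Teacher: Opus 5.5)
Your main line is the paper's proof. The iterates' measures are averages against $\alpha_k=\eta_{u_k}*\cdots*\eta_{u_1}$, the Kawada--It\^o theorem applied to a periodic block gives $\alpha_k\to\eta$, and continuity of $U\mapsto\int\xi\circ U\,d\mu_f+\int\overline{\xi}\circ U\,d\nu_f$ yields cosmic convergence. The ``invoke'' branch of your Step 3 is exactly the paper's appeal to Falah--Rotem's Theorem 1.10, so the sketch (a)--(c) is unnecessary, and your fixed-point argument for radiality does the same job as the paper's Steiner-point argument.

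The one justification to repair is admissibility. Rotation invariance and nonvanishing do not exclude the degenerate pair $(c\delta_o,0)$, which lies in every hyperplane. You should add, as the paper does, that $\bar\nu(\Sp)=\nu_f(\Sp)$ and $|x|_\#\bar\mu=|x|_\#\mu_f$. Then this case would force $(\mu_f,\nu_f)=(c\delta_o,0)$, contradicting condition (iii) for $f$ itself.
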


   \begin{definition}
    Given $f\in\LC_n$, we call $f^\sharp$ the \emph{mean Blaschke symmetral} of $f$.
   \end{definition}

The proof of Theorem \ref{thm:Blaschke=convergence-thm} can be found in the appendix. The next result states that the mean Blaschke symmetral preserves surface area.

\begin{proposition}\label{prop:hypo-symmetral-preserves-SA}
    For every $f\in\LC_n$, the mean Blaschke symmetral $f^\sharp$ satisfies
    \[
W_1(f)=W_1(f^\sharp)\quad\text{and}\quad J(f)=J(f^\sharp).
    \]
\end{proposition}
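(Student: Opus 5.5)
The plan is to work entirely at the level of surface area measures, using the characterization of $f^\sharp$ in Theorem \ref{thm:Blaschke=convergence-thm}. We do not try to pass $W_1$ or $J$ through the hypo-limit directly. By that theorem, up to translation,
\[
\mu_{f^\sharp}=\int_{\mathrm{O}(n)}U_\#\mu_f\,d\eta(U),\qquad \nu_{f^\sharp}=\int_{\mathrm{O}(n)}U_\#\nu_f\,d\eta(U).
\]
Translations do not change $(\mu,\nu)$, so $W_1$ and $J$ are translation invariant. It therefore suffices to evaluate the formulas of Remark \ref{rmk:SA} and the identity $J(g)=\mu_g(\R^n)$ on these averaged measures.

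For the total mass, Fubini (Tonelli, everything being nonnegative) gives
\[
J(f^\sharp)=\mu_{f^\sharp}(\R^n)=\int_{\mathrm{O}(n)}(U_\#\mu_f)(\R^n)\,d\eta(U)=\int_{\mathrm{O}(n)}\mu_f(\R^n)\,d\eta(U)=J(f).
\]
Here we used that $\mu_f(\R^n)=\int f$, as recorded in the proof of Proposition \ref{prop:Blaschke-operations}(viii), and that $\eta$ is a probability measure.

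For the quermassintegral, we integrate the nonnegative Borel function $|x|$ against the mixture measure and again apply Tonelli:
\[
\int_{\R^n}|x|\,d\mu_{f^\sharp}(x)=\int_{\mathrm{O}(n)}\int_{\R^n}|Ux|\,d\mu_f(x)\,d\eta(U)=\int_{\R^n}|x|\,d\mu_f(x),
\]
since each $U$ is orthogonal. Similarly, $\nu_{f^\sharp}(\Sp)=\int_{\mathrm{O}(n)}\nu_f(U^{-1}\Sp)\,d\eta=\nu_f(\Sp)$. Adding these two identities and using \eqref{def:quermassintegral} yields $W_1(f^\sharp)=W_1(f)$.

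The only delicate point is making sense of the measure-valued Haar integral and justifying the interchange of integrals. I would define $\int U_\#\mu\,d\eta$ by
\[
A\mapsto\int_{\mathrm{O}(n)}\mu(U^{-1}A)\,d\eta(U).
\]
The map $U\mapsto\mu(U^{-1}A)$ is measurable: by a monotone class argument this reduces to open sets $A$, where $(U,x)\mapsto\mathbbm 1_A(Ux)$ is Borel on $\mathrm{O}(n)\times\R^n$. The same argument on simple functions extends the identity to nonnegative Borel integrands such as $|x|$.

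A fallback route avoids the Haar representation. Each step $B^\sharp_{u_k}$ preserves $W_1$ and $J$ by Proposition \ref{prop-Blaschke-symmetral}(iv),(v), so one could try to pass to the limit. This, however, requires continuity of $J$ and $W_1$ under hypo-convergence, which is harder; the measure identity above is the cleaner path.
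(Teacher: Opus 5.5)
Your proposal is correct and follows essentially the same route as the paper. Both arguments read off $(\mu_{f^\sharp},\nu_{f^\sharp})$ as Haar averages from Theorem~\ref{thm:Blaschke=convergence-thm}, then use the $\mathrm{O}(n)$-invariance of $|x|$, $\Sp$ and $\R^n$ together with $J(g)=\mu_g(\R^n)$ and Remark~\ref{rmk:SA}; your extra justification of the measure-valued Haar integral and the Tonelli interchange fills in a detail the paper leaves implicit.
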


\begin{proof}
    Let $f\in\LC_n$. By Remark \ref{rmk:SA}, $W_1(f)=\int_{\R^n}|x|\,d\mu_f+\nu_f(\Sp)$. By Theorem \ref{thm:Blaschke=convergence-thm}, 
          \[
(\mu_{f^\sharp},\nu_{f^\sharp})=\left(\int_{\mathrm{O}(n)}U_{\#}\mu_f\,d\eta(U),\int_{\mathrm{O}(n)}U_\#\nu_f\,d\eta(U)\right).
       \]
       Hence,
       \begin{align*}
           \int_{\R^n}|x|\,d\mu_{f^\sharp} &= \int_{\mathrm{O}(n)}\int_{\R^n}|x|\,d(U_\#\mu_f)\,d\eta(U)=\int_{\mathrm{O}(n)}\int_{\R^n}|Ux|\,d\mu_f\,d\eta(U)=\int_{\R^n}|x|\,d\mu_f
       \end{align*}
       since $|Ux|=|x|$ for every $x\in\R^n$ and every $U\in\mathrm{O}(n)$. Likewise,
       \begin{align*}
           \nu_{f^\sharp}(\Sp) &=\int_{\mathrm{O}(n)}(U_\#\nu_f)(\Sp)\,d\eta(U)=\int_{\mathrm{O}(n)}\nu_f(U^{-1}\Sp)\,d\eta(U)=\nu_f(\Sp)
       \end{align*}
       since $U^{-1}\Sp=\Sp$ for every $U\in\mathrm{O}(n)$. Therefore, 
       \[
W_1(f^\sharp)=\int_{\R^n}|x|\,d\mu_{f^\sharp}+\nu_{f^\sharp}(\Sp)=\int_{\R^n}|x|\,d\mu_f+\nu_f(\Sp)=W_1(f).
       \]
       We also have
       \[
J(f^\sharp)=\mu_{f^\sharp}(\R^n)=\int_{\mathrm{O}(n)}(U_\#\mu_f)(\R^n)\,d\eta(U)=\int_{\mathrm{O}(n)}\mu_f(U^{-1}\R^n)\,d\eta(U)=\mu_f(\R^n)=J(f).
       \]
\end{proof}

\subsection{Entropy inequalities}

The following definition is from \cite[Definition 3.10]{Colesanti-Fragala-variational}.

\begin{definition}
    Let $f\in\LC_n$. The \emph{entropy} of $f$ is defined by
    \[
\ent(f)=J(f\log f)-J(f)\log J(f).
    \]
\end{definition}
\noindent Note that when $f$ is a probability density function, this reduces to $\ent(f)=\int f\log f$, which is the negative of the usual differential entropy from information theory. In particular, if $f\in\LC_n$ then by \cite[Proposition 3.11]{Colesanti-Fragala-variational},
\begin{equation}\label{eq:delta-f-f}
    \delta(f,f)=J(f)\left[n+\log J(f)\right]+\ent(f).
\end{equation}

Our goal is to prove that the entropy is concave with respect to the Blaschke sum. Our proof follows that of the classical Kneser--S\"uss inequality, which uses Minkowski's first inequality, see \cite{SchneiderBook}. In \cite{Colesanti-Fragala-variational}, they also proved the following functional analogue of Minkowski's first inequality. 

\begin{theorem}\cite[Theorem 5.1]{Colesanti-Fragala-variational}\label{thm:CF-entropy}
    For all $f,g\in\LC_n$, we have 
    \[
\delta(f,g)\geq J(f)\left[n+\log J(g)\right]+\ent(f)
    \]
    with equality if and only if $f$ and $g$ are translates, i.e., there exists $x_0\in\R^n$ such that $g(x)=f(x-x_0)$ for all $x\in\R^n$. 
\end{theorem}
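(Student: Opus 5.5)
The plan is to obtain the inequality as the derivative at $t=0$ of a concave function produced by the Prékopa--Leindler inequality. This mirrors how Minkowski's first inequality follows from Brunn--Minkowski. For $f,g\in\LC_n$ and $t\in[0,1]$ set
\[
F(t):=\log J\bigl(((1-t)\cdot f)\star(t\cdot g)\bigr).
\]
Write $f_t:=((1-t)\cdot f)\star(t\cdot g)$. By the definition of the Asplund operations,
\[
f_t\bigl((1-t)x+ty\bigr)\ge f(x)^{1-t}g(y)^t .
\]
The Prékopa--Leindler inequality then gives $J(f_t)\ge J(f)^{1-t}J(g)^t$. The same pointwise inequality, applied between any two parameters $t_1,t_2$ (using associativity of $\star$ and $\cdot$ on the class), gives concavity of $F$ on $[0,1]$. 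Concavity yields
\[
F'(0^+)\ \ge\ F(1)-F(0)=\log J(g)-\log J(f).
\]

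Next I will compute $F'(0^+)$ in terms of $\delta(f,g)$. For $t\in(0,1)$ write $s=t/(1-t)$, so that $f_t=(1-t)\cdot h_s$ with $h_s:=f\star(s\cdot g)$. The scaling rule $(\lambda\cdot h)(x)=h(x/\lambda)^\lambda$ gives
\[
J(f_t)=(1-t)^n\int_{\R^n}h_s(x)^{1-t}\,dx .
\]
Differentiating at $t=0^+$ produces three contributions:
\begin{itemize}
\item the factor $(1-t)^n$ contributes $-nJ(f)$;
\item the variation of $h_s$ contributes $\delta(f,g)$, since $ds/dt=1$ at $t=0$;
\item the exponent $1-t$ contributes $-\int f\log f$.
\end{itemize}
Hence
\[
\frac{d}{dt}\Big|_{t=0^+}J(f_t)=\delta(f,g)-nJ(f)-J(f\log f).
\]
Dividing by $J(f)$, inserting this into the concavity bound and rearranging gives
\[
\delta(f,g)\ \ge\ J(f)\bigl[n+\log J(g)\bigr]+J(f\log f)-J(f)\log J(f),
\]
which is exactly the claimed inequality with $\ent(f)$.

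The main obstacle is justifying this derivative computation, in particular separating the two effects (the change of $h_s$ and the change of exponent) and passing the limit under the integral. I would handle it as follows. For each fixed $x$, the map $t\mapsto -\log f_t(x)$ is convex, because infimal convolution with $t\psi(\cdot/t)$ is jointly convex in $(x,t)$. Hence the difference quotients are monotone in $t$. Coercivity of $\varphi=-\log f$ gives an integrable dominating function of the form $C e^{-a|x|}$ on compact $t$-ranges. Monotone or dominated convergence then applies, together with the existence of $\delta(f,g)$ from Theorem \ref{thm:rotem-falah-variation}. The cross term, where the exponent and the function change together, is $o(t)$ because $\log h_s\to\log f$ locally uniformly on $\operatorname{int}\supp f$.

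For the equality case, note that $F$ is concave. So $F'(0^+)=F(1)-F(0)$ forces $F$ to be affine on $[0,1]$, and in particular Prékopa--Leindler is an equality at $t=\tfrac12$. By Dubuc's characterization of equality for log-concave functions, $g(x)=a f(x-x_0)$ for some $a>0$. Substituting this back into the inequality, the $1$-homogeneous dependence of $\delta(f,\cdot)$ on $\log a$ leaves a residual term $J(f)\log a$ on the right-hand side, and equality forces $a=1$. Conversely, if $g$ is a translate of $f$, then $\delta(f,g)=\delta(f,f)$ and $J(g)=J(f)$. Equality then follows from \eqref{eq:delta-f-f}.
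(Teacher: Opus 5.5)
The paper gives no proof of this statement; it is quoted from Colesanti--Fragal\`a. Your derivation of the inequality itself is the standard argument and is fine: log-concavity of $t\mapsto J\bigl(((1-t)\cdot f)\star(t\cdot g)\bigr)$ via Pr\'ekopa--Leindler, then differentiation at $t=0^+$ through $f_t=(1-t)\cdot h_s$. One routine point needs care. On $\{h_s>0\}\setminus\supp(f)$ we have $\log f=-\infty$, so local uniform convergence on $\operatorname{int}\supp(f)$ does not control the cross term there.

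The genuine problem is the last step of your equality argument, where you claim that equality forces $a=1$. Directly from the definitions, $t\cdot(ag)=a^t\,(t\cdot g)$, hence $f\star(t\cdot(ag))=a^t\bigl(f\star(t\cdot g)\bigr)$ and
\[
\delta(f,ag)=\delta(f,g)+J(f)\log a .
\]
The right-hand side changes by the same amount, $J(f)\log J(ag)-J(f)\log J(g)=J(f)\log a$. So the term $J(f)\log a$ is not residual: it appears on both sides and cancels. The inequality is invariant under $g\mapsto ag$, and equality for $g=f(\cdot-x_0)$ propagates to $g=af(\cdot-x_0)$ for every $a>0$.

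Your own framework shows this too. For $g=af(\cdot-x_0)$ one gets $f_t=a^t f(\cdot-tx_0)$, so $F(t)=\log J(f)+t\log a$ is affine and your concavity bound is an equality. A concrete instance: take $f=\mathbbm{1}_K$ and $g=2\mathbbm{1}_K$. Then $f\star(t\cdot g)=2^t\mathbbm{1}_{(1+t)K}$, so
\[
\delta(f,g)=\vol_n(K)(n+\log 2)=J(f)\bigl[n+\log J(g)\bigr]+\ent(f),
\]
although $f$ and $g$ are not translates. Hence the `only if' half of the equality clause, as stated, is false, and no argument can force $a=1$. The correct characterization is exactly what your Dubuc step already yields: equality holds if and only if $g(x)=af(x-x_0)$ for some $a>0$ and $x_0\in\R^n$.

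This correction also affects the equality statements the paper derives from this theorem. In Proposition \ref{thm:entropy}, equality holds whenever $g$ is a positive multiple of a translate of both $f_1$ and $f_2$. In Theorem \ref{thm:entropy-concave}, take $f_2=cf_1$ with $c\neq 1$. Since $\mu_{cf}=c\mu_f$ and $\nu_{cf}=c\nu_f$, we get $(1-t)\odot f_1\sharp t\odot f_2=(1-t+tc)f_1$ up to translation. Because $\ent(\lambda f)=\lambda\,\ent(f)$, equality holds there even though $f_1,f_2$ are not translates. The indicator-function corollaries are unaffected, since for indicators $a=1$ is forced.
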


For the Blaschke sum, we obtain the following version.
\begin{proposition}\label{thm:entropy}
    Let $f_1,f_2,g\in\LC_n$. Then
    \[
\delta(f_1\sharp f_2,g)\geq (J(f_1)+J(f_2))\left[n+\log J(g)\right]+\ent(f_1)+\ent(f_2)
    \]
    with equality if and only if $f_1,f_2,g$ are translates, i.e., there exist $x_1,x_2\in\R^n$ such that $g(x)=f_1(x-x_1)=f_2(x-x_2)$ for all $x\in\R^n$.
\end{proposition}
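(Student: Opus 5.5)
The plan is to reduce the inequality to two applications of the functional Minkowski first inequality, Theorem \ref{thm:CF-entropy}, using the Blaschke additivity of the first variation. By Proposition \ref{prop:Blaschke-operations}(ii), applied with the upper semicontinuous log-concave function $g\in\LC_n$, we have
\[
\delta(f_1\sharp f_2,g)=\delta(f_1,g)+\delta(f_2,g).
\]
Note that $\delta(f_1\sharp f_2,g)$ is well defined even though $f_1\sharp f_2$ is only defined up to translation. Indeed, the right-hand side of Theorem \ref{thm:rotem-falah-variation} depends only on the surface area measures, which are translation invariant.

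Next, I apply Theorem \ref{thm:CF-entropy} separately to the pairs $(f_1,g)$ and $(f_2,g)$:
\[
\delta(f_i,g)\geq J(f_i)\left[n+\log J(g)\right]+\ent(f_i),\qquad i=1,2.
\]
Summing these two inequalities gives exactly the claimed lower bound. The factor multiplying $n+\log J(g)$ is $J(f_1)+J(f_2)$, which by Proposition \ref{prop:Blaschke-operations}(viii) also equals $J(f_1\sharp f_2)$. This is worth recording as a remark, but it is not needed for the proof.

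For the equality case, suppose equality holds in the summed inequality. Each of the two individual inequalities has a nonnegative defect, and the defects add up to zero, so both must be equalities. The equality characterization in Theorem \ref{thm:CF-entropy} then gives $x_1,x_2$ with $g(x)=f_1(x-x_1)$ and $g(x)=f_2(x-x_2)$, which is the stated condition. Conversely, if $g$ is a translate of both $f_1$ and $f_2$, then equality holds in each application of Theorem \ref{thm:CF-entropy}, and hence in the sum.

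I expect no serious obstacle. The only point needing care is to make sure all hypotheses are in place. First, $g\in\LC_n$ is required in Theorem \ref{thm:CF-entropy}. Second, every quantity must be finite, so that adding the inequalities and cancelling defects is legitimate. Both $\delta(f_i,g)$ and $\ent(f_i)$ are finite for $f_i,g\in\LC_n$: the former by Theorem \ref{thm:rotem-falah-variation}, using the finite first moment of $\mu_{f_i}$ and the growth of $h_g$; the latter by integrability of log-concave functions. I would check these finiteness claims briefly; they are where the argument could fail.
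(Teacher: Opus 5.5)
Your proposal is correct and follows the paper's proof exactly: Blaschke additivity of $\delta(\cdot,g)$ from Proposition~\ref{prop:Blaschke-operations}(ii), Theorem~\ref{thm:CF-entropy} applied to $(f_1,g)$ and $(f_2,g)$, summation, and the equality case read off from equality in both applications. Two side remarks: first, your claim that $\delta(f_i,g)$ is always finite is false (for a Gaussian $g$ one has $h_{\supp(g)}\equiv+\infty$ on $\Sp$, so $\delta(f_i,g)=+\infty$ whenever $\nu_{f_i}\neq 0$), but finiteness is not needed, since the inequality is then trivial and in the equality case the right-hand side is finite, which forces both terms to be finite; second, independently of your route, the equality clause quoted in Theorem~\ref{thm:CF-entropy} is too strong, because $\delta(f,cg)=\delta(f,g)+J(f)\log c$ and $J(cg)=cJ(g)$ show that $g=c\,f(\cdot-x_0)$ with any $c>0$ also gives equality (e.g.\ $f_1=f_2=\mathbbm{1}_K$, $g=2\mathbbm{1}_K$ gives equality in the present statement without $g$ being a translate), so the ``only if'' part of that theorem, and the one you inherit here, must allow positive multiples of translates.
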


\begin{proof}
    By Proposition \ref{prop:Blaschke-operations}(ii) and Theorem \ref{thm:CF-entropy}, we obtain the inequality
    \begin{align}\label{eq:entropy-1st-variation}
        \delta(f_1\sharp f_2,g) &=\delta(f_1,g)+\delta(f_2,g)
        \geq \big(J(f_1)+J(f_2)\big)[n+\log J(g)]+\ent(f_1)+\ent(f_2).
    \end{align}
    Here we have applied Theorem \ref{thm:CF-entropy} to  the pairs $(f_1,g)$ and $(f_2,g)$. Therefore, equality holds in \eqref{eq:entropy-1st-variation} if and only if 
    \[
    \big[\exists x_1\in\R^n,\,\forall x\in\R^n, \, g(x)=f_1(x-x_1)\big]\wedge \big[\exists x_2\in\R^n,\,\forall x\in\R^n, \, g(x)=f_2(x-x_2)\big]. 
    \]
    This is logically equivalent to 
    \[
\exists x_1,x_2\in\R^n,\,\forall x\in\R^n,\, g(x)=f_1(x-x_1)=f_2(x-x_2),
    \]
    i.e., $f_1,f_2,g$ are translates.
\end{proof}

As a corollary, we obtain the following inequality for mixed volumes.
\begin{corollary}\label{cor:mixed-vol}
    For all $K,L,M\in\mathcal{K}^n$, we have 
    \begin{align*}
        n\left(V_1(K,M)+V_1(L,M)\right)&\geq
\left(\vol_n(K)+\vol_n(L)\right)\left[n+\log(\vol_n(M))\right]\\&-\vol_n(K)\log(\vol_n(K))-\vol_n(L)\log(\vol_n(L))
    \end{align*}
    with equality if and only if $K,L,M$ are translates of one another.
\end{corollary}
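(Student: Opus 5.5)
We specialize Proposition \ref{thm:entropy} to indicator functions, taking $f_1=\mathbbm{1}_K$, $f_2=\mathbbm{1}_L$ and $g=\mathbbm{1}_M$. All three lie in $\LC_n$. The work is to compute each term in the inequality
\[
\delta(\mathbbm{1}_K\sharp\mathbbm{1}_L,\mathbbm{1}_M)\geq (J(\mathbbm{1}_K)+J(\mathbbm{1}_L))\left[n+\log J(\mathbbm{1}_M)\right]+\ent(\mathbbm{1}_K)+\ent(\mathbbm{1}_L).
\]

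\emph{Right-hand side.} We have $J(\mathbbm{1}_K)=\vol_n(K)$, and similarly for $L$ and $M$. Since $\mathbbm{1}_K\log\mathbbm{1}_K\equiv 0$ (with the convention $0\log 0=0$), the definition of entropy gives $\ent(\mathbbm{1}_K)=-\vol_n(K)\log\vol_n(K)$, and likewise for $L$. These give exactly the right-hand side of the corollary.

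\emph{Left-hand side.} By Proposition \ref{prop:Blaschke-operations}(ii),
\[
\delta(\mathbbm{1}_K\sharp\mathbbm{1}_L,\mathbbm{1}_M)=\delta(\mathbbm{1}_K,\mathbbm{1}_M)+\delta(\mathbbm{1}_L,\mathbbm{1}_M).
\]
Each term is then evaluated with Theorem \ref{thm:rotem-falah-variation}, using the surface area measures of indicators from the proof of Proposition \ref{prop:indicators}: $\mu_{\mathbbm{1}_K}=\vol_n(K)\delta_o$ and $\nu_{\mathbbm{1}_K}=S_K$. Since $h_{\mathbbm{1}_M}=h_M$ and $h_M(o)=0$, the $\mu$-term vanishes. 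What remains is
\[
\delta(\mathbbm{1}_K,\mathbbm{1}_M)=\int_{\Sp}h_M\,dS_K=nV_1(K,M).
\]
This is the standard mixed volume representation, and it also agrees with the classical variation $\frac{d}{dt}\vol_n(K+tM)|_{t=0^+}$, because $\mathbbm{1}_K\star(t\cdot\mathbbm{1}_M)=\mathbbm{1}_{K+tM}$. Doing the same for $L$ yields the left-hand side $n(V_1(K,M)+V_1(L,M))$.

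\emph{Equality.} By Proposition \ref{thm:entropy}, equality holds iff $\mathbbm{1}_M(x)=\mathbbm{1}_K(x-x_1)=\mathbbm{1}_L(x-x_2)$ for all $x$. This is equivalent to $M=K+x_1=L+x_2$, i.e. $K,L,M$ are translates of one another.

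The only step needing care is the identification $\delta(\mathbbm{1}_K,\mathbbm{1}_M)=nV_1(K,M)$. It relies on the vanishing of the $\mu$-contribution, which holds because $\mu_{\mathbbm{1}_K}$ is concentrated at the origin where $h_M=0$. It also relies on the normalization convention for $V_1$, which we take to be $V_1(K,M)=\frac1n\int h_M\,dS_K$. The rest is direct substitution.
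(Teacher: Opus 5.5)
Your proposal is correct and is essentially the paper's proof: specialize Proposition \ref{thm:entropy} to $f_1=\mathbbm{1}_K$, $f_2=\mathbbm{1}_L$, $g=\mathbbm{1}_M$, use $\ent(\mathbbm{1}_K)=-\vol_n(K)\log\vol_n(K)$, and identify $\delta(\mathbbm{1}_K,\mathbbm{1}_M)=\int_{\Sp}h_M\,dS_K=nV_1(K,M)$ through the indicator surface area measures and Proposition \ref{prop:Blaschke-operations}(ii). Your explicit observation that the $\mu$-term drops out because $h_M(o)=0$ fills in a detail the paper leaves implicit.
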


\begin{proof}
First note that for any convex body $K\subset\R^n$,
\[
\ent(\mathbbm{1}_K)=-\vol_n(K)\log(\vol_n(K)).
\]
Applying Proposition \ref{thm:entropy} with $f_1=\mathbbm{1}_K,f_2=\mathbbm{1}_L$ and $g=\mathbbm{1}_M$, we get
\begin{align*}
    \delta(\mathbbm{1}_K\sharp\mathbbm{1}_L,\mathbbm{1}_M) &\geq (J(\mathbbm{1}_K)+J(\mathbbm{1}_L))\left[n+\log J(\mathbbm{1}_M)\right]+\ent(\mathbbm{1}_K)+\ent(\mathbbm{1}_L)\\
    &=\left(\vol_n(K)+\vol_n(L)\right)\left[n+\log(\vol_n(M))\right]\\
    &-\vol_n(K)\log(\vol_n(K))-\vol_n(L)\log(\vol_n(L))
\end{align*}
with equality if and only if $\mathbbm{1}_K,\mathbbm{1}_L,\mathbbm{1}_M$ are translates of one another, i.e., if and only if $K,L,M$ are translates of one another.

By the first variation formula, together with the indicator surface area measures $(\mu_{\mathbbm{1}_K},\nu_{\mathbbm{1}_K})=(\vol_n(K)\delta_o,S_K)$, we have
\[
\delta(\mathbbm{1}_K,\mathbbm{1}_M)=\int h_M\,dS_K=nV_1(K,M).
\]
Similarly, $\delta(\mathbbm{1}_L,\mathbbm{1}_M)=nV_1(L,M)$. To conclude the proof, note that by Proposition \ref{prop:Blaschke-operations}(ii) we have 
\[
\delta(\mathbbm{1}_K\sharp\mathbbm{1}_L,\mathbbm{1}_M) =\delta(\mathbbm{1}_K,\mathbbm{1}_M) +\delta(\mathbbm{1}_L,\mathbbm{1}_M)=n\left(V_1(K,M)+V_1(L,M)\right).
\]
The result follows.
\end{proof}

\begin{remark}
    Note that 
    \[
nV_1(K\#L,M)=\int h_M\,dS_{K\# L}=\int h_M\,dS_K+\int h_M\,dS_L=nV_1(K,M)+nV_1(L,M)
    \]
    so the left-hand side of the inequality in Corollary \ref{cor:mixed-vol} can be expressed in terms of the Blaschke sum of bodies. Proposition \ref{prop:Blaschke-operations}(ii) may be regarded as a functional version of this identity.
\end{remark}

The next result states that the entropy is concave with respect to the Blaschke sum.

\begin{theorem}\label{thm:entropy-concave}
Let $f_1,f_2\in\LC_n$ and $0\leq t\leq 1$. Then
\[
\ent\big((1-t)\odot f_1\sharp t\odot f_2\big)\geq (1-t)\ent(f_1)+t\ent(f_2).
\]
If $0<t<1$, then equality holds if and only if $f_1$ and $f_2$ are translates of one another. 
\end{theorem}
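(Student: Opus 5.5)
The plan is to imitate the classical derivation of the Kneser--S\"uss inequality from Minkowski's first inequality. The endpoints $t=0,1$ are trivial: interpret $0\odot f_1\sharp 1\odot f_2$ as $f_2$ by Proposition \ref{prop:Blaschke-operations}(vi), and similarly at $t=1$. So fix $0<t<1$ and set $f_t:=(1-t)\odot f_1\sharp t\odot f_2\in\LC_n$, which is well defined up to translation.

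First we check that every quantity used is translation invariant in both arguments. $J$ and $\ent$ are clearly invariant. Translating $f$ leaves $(\mu_f,\nu_f)$ unchanged, so $\delta(f,g)$ is unchanged. Translating $g$ by $v$ replaces $h_g(x)$ by $h_g(x)+\langle x,v\rangle$ and $h_{\supp(g)}(\theta)$ by $h_{\supp(g)}(\theta)+\langle\theta,v\rangle$. By Theorem \ref{thm:rotem-falah-variation}, the resulting change in $\delta(f,g)$ is $\int\langle x,v\rangle\,d\mu_f+\int\langle\theta,v\rangle\,d\nu_f$, which is $0$ by the centering condition (ii) of Theorem \ref{thm:rotem-falah-main}. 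Hence the choice of representative of $f_t$ is immaterial.

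Next we compute $\delta(f_t,f_t)$ in two ways. By \eqref{eq:delta-f-f},
\[
\delta(f_t,f_t)=J(f_t)\left[n+\log J(f_t)\right]+\ent(f_t).
\]
On the other hand, Proposition \ref{prop:Blaschke-operations}(ii),(iii) give $\delta(f_t,f_t)=(1-t)\delta(f_1,f_t)+t\,\delta(f_2,f_t)$. Applying Theorem \ref{thm:CF-entropy} to the pairs $(f_1,f_t)$ and $(f_2,f_t)$ yields
\[
\delta(f_t,f_t)\ge \big((1-t)J(f_1)+tJ(f_2)\big)\left[n+\log J(f_t)\right]+(1-t)\ent(f_1)+t\,\ent(f_2).
\]
By Proposition \ref{prop:Blaschke-operations}(viii), $J(f_t)=(1-t)J(f_1)+tJ(f_2)$. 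The terms $J(f_t)[n+\log J(f_t)]$ on the two sides therefore cancel exactly, leaving $\ent(f_t)\ge(1-t)\ent(f_1)+t\,\ent(f_2)$. This cancellation is the one point where the linearity of mass under $\sharp$ is essential; it replaces the homogeneity argument used in the classical proof.

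For the equality case with $0<t<1$, both coefficients are positive. So equality forces equality in Theorem \ref{thm:CF-entropy} for both pairs: $f_t$ is a translate of $f_1$ and also of $f_2$, hence $f_1$ and $f_2$ are translates of one another. Conversely, suppose $f_2$ is a translate of $f_1$. Then $(\mu_{f_2},\nu_{f_2})=(\mu_{f_1},\nu_{f_1})$, so by Proposition \ref{prop:Blaschke-operations}(vii),(vi) $f_t$ is a translate of $f_1$. Both sides then equal $\ent(f_1)$. I expect the only delicate point to be the translation-invariance bookkeeping above (in particular invariance of $\delta$ in its second argument), since the rest is a direct chain of earlier results.
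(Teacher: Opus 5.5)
Your proof of the inequality is the paper's own argument: two expressions for $\delta(f_t,f_t)$, Blaschke linearity in the first slot, Theorem~\ref{thm:CF-entropy} applied to $(f_1,f_t)$ and $(f_2,f_t)$, and cancellation via linearity of $J$. Your translation bookkeeping is also correct. The gap is in the ``only if'' half of the equality case. You, like the paper, read it off from the equality clause of Theorem~\ref{thm:CF-entropy} as quoted, and that clause is false. The invariance that matters here is under $g\mapsto ag$, not under translations. Indeed $h_{ag}=h_g+\log a$ and $\supp(ag)=\supp(g)$. By Theorem~\ref{thm:rotem-falah-variation} and $\mu_f(\R^n)=J(f)$, this gives $\delta(f,ag)=\delta(f,g)+J(f)\log a$. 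The right-hand side $J(f)[n+\log J(g)]+\ent(f)$ also increases by exactly $J(f)\log a$. Hence equality holds for every $g=af(\cdot-x_0)$ with $a>0$; this, not ``translates'', is the equality case that the Pr\'ekopa--Leindler derivation gives.

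This breaks the statement you are proving, not just the proof. Since $\mu_{af}=a\mu_f$ and $\nu_{af}=a\nu_f$, Theorem~\ref{thm:rotem-falah-main} shows that $a\odot f$ is a translate of $af$. A direct computation gives $\ent(af)=a\,\ent(f)$. Take $f_2=af_1$ with $a\neq1$, for example $f_1=\mathbbm{1}_{B_2^n}$ and $f_2=2\,\mathbbm{1}_{B_2^n}$. Then $f_t$ is a translate of $((1-t)+ta)f_1$, and
\[
\ent(f_t)=\big((1-t)+ta\big)\ent(f_1)=(1-t)\ent(f_1)+t\,\ent(f_2).
\]
But $\sup f_2=a\sup f_1\neq\sup f_1$, so $f_1$ and $f_2$ are not translates. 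Your step ``equality forces $f_t$ to be a translate of $f_1$ and of $f_2$'' therefore fails. Run with the corrected equality clause, your argument proves the right statement: for $0<t<1$, equality holds if and only if $f_2(x)=af_1(x-x_0)$ for some $a>0$ and $x_0\in\R^n$. For the forward direction, equality in both applications gives $f_t=a_1f_1(\cdot-x_1)=a_2f_2(\cdot-x_2)$. The converse is the computation displayed above together with translation invariance.
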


\begin{remark}
    In particular, the usual differential entropy $h(f):=-J(f\log f)$ is convex with respect to the Blaschke operations, i.e., if $f_1,f_2\in\LC_n$ are probability distributions, then 
\[
\forall t\in[0,1],\quad h\big((1-t)\odot f_1\sharp t\odot f_2\big)\leq (1-t)h(f_1)+th(f_2).
\]
If $0<t<1$, then equality holds if and only if $f_1,f_2$ are translates.
\end{remark}

\begin{proof}[Proof of Theorem \ref{thm:entropy-concave}]

Set $h:=(1-t)\odot f_1\sharp t\odot f_2$.  Using the identity
\[
\delta(h,h)=J(h)\bigl(n+\log J(h)\bigr)+\ent(h),
\]
which follows from \eqref{eq:delta-f-f}, together with the Blaschke additivity and homogeneity of $\delta$ in the first variable, we get
\[
\delta(h,h)=(1-t)\delta(f_1,h)+t\delta(f_2,h).
\]
Applying Theorem \ref{thm:CF-entropy} to the pairs $(f_1,h)$ and $(f_2,h)$ yields
\begin{align*}
\delta(h,h)
&\ge (1-t)\Big(J(f_1)\bigl(n+\log J(h)\bigr)+\ent(f_1)\Big)  + t\Big(J(f_2)\bigl(n+\log J(h)\bigr)+\ent(f_2)\Big).
\end{align*}
Recall that in Proposition \ref{prop:Blaschke-operations}(viii), we showed that the total mass $J$ is linear under $\sharp$ and $\odot$. Hence $J(h)=(1-t)J(f_1)+tJ(f_2)$. It follows that
\[
\delta(h,h)\geq J(h)\bigl(n+\log J(h)\bigr)+(1-t)\ent(f_1)+t\ent(f_2).
\]
Comparing this with the formula \eqref{eq:delta-f-f} for $\delta(h,h)$ gives
\[
\ent(h)\geq (1-t)\ent(f_1)+t\ent(f_2).
\]

Assume now that $0<t<1$. Equality holds if and only if equality holds in both applications of Theorem \ref{thm:CF-entropy}, that is, if and only if $h$ is a translate of both $f_1$ and $f_2$. Equivalently, $f_1$ and $f_2$ are translates of one another.

Conversely, if $f_1$ and $f_2$ are translates of one another, then they have the same surface area measures. Thus, $h$ has the same surface area measures as $f_1$, so $h$ is a translate of $f_1$ and equality follows.
\end{proof}

It is natural to ask what geometric inequalities we can obtain from this result when choosing indicator functions of convex bodies. As a corollary, we obtain the following Kneser--S\"uss-type inequality.

\begin{corollary}\label{cor:KS-cor-1}
Let $K$ and $L$ be convex bodies in $\mathbb R^n$, and let $0<t<1$. Set
\[
B_t:=(1-t)^{\frac{1}{n-1}}K\# t^{\frac{1}{n-1}}L
\]
and $m_t:=(1-t)\vol_n(K)+t\vol_n(L)$. Then
\begin{equation}\label{eqn:multiplicative-KS-type-ineq}
\vol_n(B_t)^{\frac{n-1}{n}}
\geq m_t\vol_n(K)^{-\frac{(1-t)\vol_n(K)}{n m_t}}\vol_n(L)^{-\frac{t\vol_n(L)}{nm_t}}.
\end{equation}
Equality holds if and only if $K$ and $L$ are translates. 
\end{corollary}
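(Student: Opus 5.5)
The plan is to specialize Theorem~\ref{thm:entropy-concave} to $f_1=\mathbbm{1}_K$ and $f_2=\mathbbm{1}_L$. The only real work is to identify $h:=(1-t)\odot \mathbbm{1}_K\sharp\, t\odot\mathbbm{1}_L$ explicitly as a multiple of an indicator, in the same spirit as Proposition~\ref{prop:indicators}.

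\emph{Step 1: identify $h$.} By the definitions of $\sharp$ and $\odot$, together with $(\mu_{\mathbbm{1}_K},\nu_{\mathbbm{1}_K})=(\vol_n(K)\delta_o,S_K)$, the surface area measures of $h$ are
\[
(\mu_h,\nu_h)=\bigl(m_t\,\delta_o,\,(1-t)S_K+tS_L\bigr).
\]
Since $S_{sK}=s^{n-1}S_K$, we have $(1-t)S_K+tS_L=S_{B_t}$. Now set $\lambda:=m_t/\vol_n(B_t)$ and $c:=\lambda^{-(n-1)}$. Exactly as in the proof of Proposition~\ref{prop:indicators}, the function $c\,\mathbbm{1}_{\lambda B_t}$ has
\[
\mu=c\lambda^n\vol_n(B_t)\,\delta_o=m_t\,\delta_o
\qquad\text{and}\qquad
\nu=c\lambda^{n-1}S_{B_t}=S_{B_t}.
\]
By Theorem~\ref{thm:rotem-falah-main}, $h$ is therefore a translate of $c\,\mathbbm{1}_{\lambda B_t}$.

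\emph{Step 2: compute the entropies.} For $g=c\,\mathbbm{1}_M$ we have $J(g)=c\vol_n(M)$ and $J(g\log g)=c\log c\,\vol_n(M)$. This gives $\ent(g)=-J(g)\log\vol_n(M)$, which extends the formula $\ent(\mathbbm{1}_K)=-\vol_n(K)\log\vol_n(K)$. Entropy is translation invariant, so
\[
\ent(h)=-m_t\log\bigl(\lambda^n\vol_n(B_t)\bigr)=-m_t\bigl(n\log m_t-(n-1)\log\vol_n(B_t)\bigr).
\]

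\emph{Step 3: conclude.} Theorem~\ref{thm:entropy-concave} gives
\[
-m_t\bigl(n\log m_t-(n-1)\log\vol_n(B_t)\bigr)\ge -(1-t)\vol_n(K)\log\vol_n(K)-t\vol_n(L)\log\vol_n(L).
\]
Dividing by $n m_t>0$ and rearranging yields
\[
\tfrac{n-1}{n}\log\vol_n(B_t)\ge \log m_t-\tfrac{(1-t)\vol_n(K)}{nm_t}\log\vol_n(K)-\tfrac{t\vol_n(L)}{nm_t}\log\vol_n(L),
\]
and exponentiating gives \eqref{eqn:multiplicative-KS-type-ineq}.

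\emph{Equality.} Every step other than the application of Theorem~\ref{thm:entropy-concave} is an identity. Hence equality holds if and only if $\mathbbm{1}_K$ and $\mathbbm{1}_L$ are translates, that is, if and only if $K$ and $L$ are translates.

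The only delicate point is Step 1, namely the correct bookkeeping of the scaling constants $c$ and $\lambda$. It is handled verbatim as in Proposition~\ref{prop:indicators}, with $\vol_n(K)+\vol_n(L)$ replaced by $m_t$.
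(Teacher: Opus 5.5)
Your proposal is correct and follows the paper's proof essentially step by step. Both arguments identify $(1-t)\odot\mathbbm{1}_K\sharp\, t\odot\mathbbm{1}_L$ as a translate of $\lambda_t^{-(n-1)}\mathbbm{1}_{\lambda_t B_t}$ with $\lambda_t=m_t/\vol_n(B_t)$, use $\ent(c\mathbbm{1}_A)=-c\vol_n(A)\log\vol_n(A)$, and substitute into Theorem~\ref{thm:entropy-concave}, including its equality case. The one minor difference is that you check the identification of $h$ directly from the surface area measures, whereas the paper cites Proposition~\ref{prop:indicators}, which is stated only for the unweighted sum $\mathbbm{1}_K\sharp\mathbbm{1}_L$; your direct check is the cleaner justification of that step.
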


\begin{remark}
    The inequality extends to $t=0$ and $t=1$ by continuity, with equality trivially at the endpoints.
\end{remark}

\begin{remark}
Choosing $t=\frac12$ in Corollary \ref{cor:KS-cor-1} gives
\[
B_{1/2}=\left(\tfrac{1}{2}\right)^{\frac{1}{n-1}}K
\#\left(\tfrac{1}{2}\right)^{\frac{1}{n-1}}L=2^{-\frac{1}{n-1}}(K\# L),
\]
up to translation. Hence, Corollary \ref{cor:KS-cor-1} yields
\[
\vol_n(K\# L)^{\frac{n-1}{n}}
\geq
\big(\vol_n(K)+\vol_n(L)\big)
\vol_n(K)^{-\frac{\vol_n(K)}{n(\vol_n(K)+\vol_n(L))}}\vol_n(L)^{-\frac{\vol_n(L)}{n(\vol_n(K)+\vol_n(L))}}.
\]
Equality holds if and only if $K$ and $L$ are translates.
\end{remark}

\begin{proof}[Proof of Corollary \ref{cor:KS-cor-1}]
    Set $h_t:=(1-t)\odot\mathbbm{1}_K\sharp t\odot\mathbbm{1}_L$. By Proposition \ref{prop:indicators},
    \[
h_t=\lambda_t^{-(n-1)}\mathbbm{1}_{\lambda_t B_t}
    \]
    where 
    \[
    B_t:=(1-t)^{\frac{1}{n-1}}K\# t^{\frac{1}{n-1}}L\quad\text{and}\quad \lambda_t:=\frac{(1-t)\vol_n(K)+t\vol_n(L)}{\vol_n(B_t)}.
    \]
    Also, for any $c>0$ and any convex body $A\in\mathcal{K}^n$,
    \[
\ent(c\mathbbm{1}_A)=(c\log c)\vol_n(A)-c\vol_n(A)\log(c\vol_n(A))=-c\vol_n(A)\log(\vol_n(A)).
    \]
    Thus,
    \[
\ent(h_t)=-m_t\log(\vol(\lambda_t B_t))\quad\text{where}\quad m_t:=(1-t)\vol_n(K)+t\vol_n(L).
    \]
    By the homogeneity of volume, we have \[\vol_n(\lambda_t B_t)=\lambda_t^n\vol_n(B_t)=m_t^n\vol_n(B_t)^{-(n-1)}. \]Hence, Theorem \ref{thm:entropy-concave} becomes
    \[
-m_t\log(m_t^n\vol_n(B_t)^{-(n-1)})\geq -(1-t)\vol_n(K)\log(\vol_n(K))-t\vol_n(L)\log(\vol_n(L)).
    \]
    This is equivalent to \eqref{eqn:multiplicative-KS-type-ineq}. Equality holds if and only if $\mathbbm{1}_K$ and $\mathbbm{1}_L$ are translates, i.e., if and only if $K$ and $L$ are translates.
\end{proof}

\begin{remark}
   Theorem \ref{thm:entropy-concave} may be regarded as a  functional analogue of the classical Kneser--S\"uss inequality for the canonical Blaschke sum. Note that Corollary \ref{cor:KS-cor-1} can also be obtained from the classical Kneser--S\"uss inequality and the weighted AM--GM inequality, so it is  strictly weaker than the classical Kneser--S\"uss inequality. Our proof shows that Corollary \ref{cor:KS-cor-1} can be obtained from the entropy functional rather than the classical  Kneser--S\"uss inequality.
\end{remark}

In the final result of this section, we show that the entropy of a log-concave function can be bounded by the entropy of its mean Blaschke symmetral.

\begin{proposition}\label{prop:entropy-mean-blaschke-symmetral}
    For every $f\in\LC_n$, we have $\ent(f^\sharp)\geq\ent(f)$ with equality if and only if $f$ is a translate of a radial function.
\end{proposition}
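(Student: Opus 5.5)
We would mimic the proof of Theorem \ref{thm:entropy-concave}, with the Haar average over $\mathrm{O}(n)$ playing the role of the convex combination. Set $h:=f^\sharp$. By Theorem \ref{thm:Blaschke=convergence-thm}, $\mu_h=\int_{\mathrm{O}(n)}U_\#\mu_f\,d\eta(U)$ and $\nu_h=\int_{\mathrm{O}(n)}U_\#\nu_f\,d\eta(U)$. Since $h$ is radial, $h\circ U=h$ and hence $h_h\circ U=h_h$ for every $U\in\mathrm{O}(n)$. Also $\supp(h)$ is a centered ball or $\R^n$, so $h_{\supp(h)}\circ U=h_{\supp(h)}$.

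Using Theorem \ref{thm:rotem-falah-variation} and Fubini, we first compute
\[
\delta(h,h)=\int_{\mathrm{O}(n)}\Big(\int_{\R^n}h_h(Ux)\,d\mu_f(x)+\int_{\Sp}h_{\supp(h)}(U\theta)\,d\nu_f(\theta)\Big)d\eta(U)=\delta(f,h).
\]
When $\supp(h)=\R^n$ the support function of the support is $+\infty$; in that case we expect $\nu_f=0$, or else both sides are treated consistently. Alternatively, we would compute $\delta(U^{-1}f,h)=\delta(f,h)$ directly through the pushforward identity \eqref{eq:SA-measures-reflection}, extended from reflections to all of $\mathrm{O}(n)$. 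The identity $\delta(h,h)=\delta(f,h)$ is the analogue of Blaschke additivity in the earlier proof.

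Next, we apply \eqref{eq:delta-f-f} to the left side and Theorem \ref{thm:CF-entropy} to the pair $(f,h)$:
\[
J(h)\bigl(n+\log J(h)\bigr)+\ent(h)=\delta(f,h)\ge J(f)\bigl(n+\log J(h)\bigr)+\ent(f).
\]
By Proposition \ref{prop:hypo-symmetral-preserves-SA}, $J(h)=J(f)$. Cancelling the mass terms gives $\ent(f^\sharp)\ge\ent(f)$.

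For the equality case, equality in Theorem \ref{thm:CF-entropy} forces $f$ to be a translate of $h=f^\sharp$, which is radial. Conversely, suppose $f(x)=r(x+v)$ with $r$ radial. Then $U_\#\mu_f=\mu_f$ and $U_\#\nu_f=\nu_f$ for all $U$, because surface area measures are translation invariant and transform by pushforward under orthogonal maps. So $f^\sharp$ has the same surface area measures as $f$. By Theorem \ref{thm:rotem-falah-main}, $f^\sharp$ is then a translate of $f$, and entropy is translation invariant, so equality holds.

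The main obstacle is justifying $\delta(f,h)=\delta(h,h)$ rigorously, in particular the interchange of the Haar integral with the integrals against $\mu_f$ and $\nu_f$. Since $h_h$ may take the value $+\infty$, we would first note that $\delta(f,h)<\infty$, which holds because $\delta(h,h)$ is finite, and then use Tonelli, as all integrands are bounded below after subtracting a linear term that integrates to zero by the centering condition (ii). A secondary point is the boundary-term convention when $\supp(f^\sharp)=\R^n$; there we would rely on the same conventions as in Theorem \ref{thm:rotem-falah-variation}.
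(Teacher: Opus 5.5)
Your argument is correct, and it takes a genuinely different route from the paper's.

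\textbf{The paper's route.} The paper works along the symmetrization sequence. Theorem~\ref{thm:entropy-concave} with $t=\tfrac12$ gives $\ent(B_u^\sharp g)\ge \ent(g)$, so $\ent(f_k)$ is nondecreasing. It then passes to the limit with a continuity lemma for entropy under hypo-convergence. That lemma needs uniform coercivity and a uniform upper bound for the translated iterates, which the paper attributes to the Falah--Rotem compactness theorem. For the equality case it propagates equality back through every step. This produces affine isometries with linear parts $R_{u_k}$ that preserve $f$. A common fixed point is obtained from the centroid of a superlevel set, and the conclusion comes from density of the reflection group plus upper semicontinuity.

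\textbf{Your route.} You apply Theorem~\ref{thm:CF-entropy} once, to the pair $(f,f^\sharp)$, with the Haar average replacing the convex combination. The $\mathrm{O}(n)$-invariance of $h_{f^\sharp}$ and $h_{\supp(f^\sharp)}$ gives $\delta(f^\sharp,f^\sharp)=\delta(f,f^\sharp)$. Combined with \eqref{eq:delta-f-f} and $J(f^\sharp)=J(f)$, this shows that
\[
\delta(f,f^\sharp)-J(f)\bigl[n+\log J(f^\sharp)\bigr]-\ent(f)=\ent(f^\sharp)-\ent(f).
\]
So the entropy gap is exactly the deficit in Theorem~\ref{thm:CF-entropy}, and both the inequality and the equality characterization follow at once.

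\textbf{Comparison.} Your route needs only the existence of a radial function carrying the Haar-averaged surface area measures (Steps 3 and 6 of the appendix), not the convergence of the iterates. It avoids the entropy-continuity lemma and the group-theoretic equality argument entirely. The paper's route proves the stronger fact that entropy increases at each individual Blaschke symmetrization. That is what places the result inside the comparison principle of Theorem~\ref{thm:meta-theorem}.

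\textbf{The technical points you flag are easier than you suggest.}

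\emph{Interior term.} A radial log-concave function attains its maximum at $o$. Hence $h_{f^\sharp}\ge \log f^\sharp(o)$ is bounded below by a finite constant, and it depends only on $|x|$. So $\int h_{f^\sharp}\,d\mu_{f^\sharp}=\int h_{f^\sharp}\,d\mu_f$ in $(-\infty,+\infty]$ follows directly from $|\cdot|_\#\mu_{f^\sharp}=|\cdot|_\#\mu_f$, as observed in Step 3 of the appendix. No centering argument or subtraction of a linear term is needed.

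\emph{Boundary term.} Nothing needs to be ``expected''. If $\supp(f^\sharp)=\R^n$, then $\partial\supp(f^\sharp)=\varnothing$, so $\nu_{f^\sharp}=0$. Then $\nu_f(\Sp)=\nu_{f^\sharp}(\Sp)=0$ forces $\nu_f=0$, and both boundary terms vanish. Otherwise $h_{\supp(f^\sharp)}$ is constant on $\Sp$ (equal to the radius), and the two boundary terms agree because $\nu_f$ and $\nu_{f^\sharp}$ have the same total mass.
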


\begin{proof}
We first record two elementary invariance properties of the entropy. If $a\in\mathbb R^n$ and
$T_a f(x):=f(x+a)$, then
\[
\ent(T_a f)=\ent(f).
\]
Likewise, if $U\in \mathrm{O}(n)$ and $Uf:=f\circ U$, then
\[
\ent(Uf)=\ent(f).
\]
Both identities follow immediately from the change of variables formula and the fact that translations and orthogonal transformations preserve Lebesgue measure.

We shall also use the following continuity fact. Suppose that $h_k=e^{-\phi_k}\in \LC_n$
hypo-converges to $h=e^{-\phi}\in \LC_n$, and suppose that the sequence is uniformly coercive and uniformly bounded from above in the following sense: there exist constants $a,b>0$ and
$M<\infty$ such that
\[
\phi_k(x)\geq a|x|-b
\qquad\text{for all }x\in\R^n,\ k\in\mathbb N,
\]
and
\[
\max_{\R^n} h_k\leq M
\qquad\text{for all }k\in\mathbb N.
\]
Then
\[
\ent(h_k)\longrightarrow \ent(h)\quad\text{as }k\to\infty.
\]
Indeed, hypo-convergence of $h_k$ to $h$ is equivalent to epi-convergence of $\phi_k$ to $\phi$. By  \cite[Theorem~7.17]{Rockafellar-Wets}, since $\phi_k$ epi-converges to the proper, lower semicontinuous, convex function $\phi$ and $\dom(\phi)$ has nonempty interior, $\phi_k\to\phi$ uniformly on every compact subset of $\R^n$ which does not meet
$\partial\dom(\phi)$. In particular, $\phi_k(x)\to\phi(x)$ for every
\[
x\in \operatorname{int}(\dom(\phi))
\cup\big(\R^n\setminus \overline{\dom(\phi)}\big).
\]
Since $\dom(\phi)$ is convex and has nonempty interior, its boundary has
Lebesgue measure zero. Therefore, $\phi_k(x)\to \phi(x)$ for Lebesgue-a.e. $x\in\R^n$. Equivalently,
\[
h_k(x)=e^{-\phi_k(x)}\longrightarrow e^{-\phi(x)}=h(x)
\quad\text{for Lebesgue-a.e. }x\in\mathbb R^n.
\]
Moreover,
\[
0\leq h_k(x)=e^{-\phi_k(x)}\leq e^b e^{-a|x|},
\]
and the function on the right-hand side is integrable. Therefore, by the dominated convergence theorem,
\[
J(h_k)=\int_{\R^n}h_k(x)\,dx\longrightarrow\int_{\R^n}h(x)\,dx=J(h)\quad\text{as }k\to\infty.
\]
Next, since $\max h_k\leq M$, we have $\phi_k(x)\geq -\log M$ for all $x\in\R^n$ and all $k\in\mathbb{N}$. The scalar function $t\mapsto |t|e^{-t}$ is bounded on $[-\log M,\infty)$ by a constant multiple of $e^{-t/2}$. More precisely, there
exists $C_M<\infty$ such that
\[
|t|e^{-t}\leq C_M e^{-t/2}
\qquad\text{for all }t\geq -\log M.
\]
Thus,
\[
|h_k(x)\log h_k(x)|
=|\phi_k(x)|e^{-\phi_k(x)}
\leq C_M e^{-\phi_k(x)/2}\leq
C_M e^{b/2}e^{-a|x|/2}.
\]
Again, the right-hand side is integrable. Since
\[
h_k(x)\log h_k(x)\longrightarrow h(x)\log h(x)
\]
for a.e. $x$, the dominated convergence theorem gives
\[
J(h_k\log h_k)\longrightarrow
J(h\log h).
\]
Combining this with $J(h_k)\to J(h)$ and the continuity of $s\mapsto s\log s$ on
$(0,\infty)$, we obtain
\[
\ent(h_k)=J(h_k\log h_k)-J(h_k)\log J(h_k)\longrightarrow
J(h\log h)-J(h)\log J(h)
=\ent(h).
\]

We now prove the inequality. Choose the sequence of directions $u_1,u_2,\ldots\in\Sp$ in Theorem~\ref{thm:Blaschke=convergence-thm} so that the subgroup of $\mathrm{O}(n)$ generated by the reflections $R_{u_k}$ is dense in $\mathrm{O}(n)$. Set
\[
f_0:=f,
\qquad
f_k:=B^\sharp_{u_k}f_{k-1}
=B^\sharp_{u_k}\cdots B^\sharp_{u_1}f
\quad(k\geq 1).
\]
By Theorem~\ref{thm:Blaschke=convergence-thm}, there exist translations $v_k\in\mathbb R^n$ such that $\widetilde f_k(x):=f_k(x+v_k)$ 
hypo-converges to $f^\sharp$. For every $h\in\LC_n$ and every $u\in\Sp$, applying Theorem \ref{thm:entropy-concave} with
$f_1=h$, $f_2=R_u h$, and $t=1/2$ we get
\[
\ent(B^\sharp_u h)
=\ent\left(\frac12\odot h\sharp \frac12\odot R_u h\right)
\geq\frac12\ent(h)+\frac12\ent(R_u h)
=\ent(h).
\]
Therefore,
\[
\ent(f_0)\leq \ent(f_1)\leq\cdots\leq
\ent(f_k)\leq\cdots .
\]
Since entropy is translation invariant, we have $\ent(\widetilde f_k)=\ent(f_k)$ for every $k$. The sequence $\widetilde f_k$ is precisely the translated sequence supplied by Theorem~\ref{thm:Blaschke=convergence-thm}.
By the compactness and convergence theorem of Falah and Rotem used in Theorem~\ref{thm:Blaschke=convergence-thm}, the
corresponding sequence has the uniform coercivity and uniform upper bounds required in the continuity argument above. Hence,
\[
\ent(f_k)=\ent(\widetilde f_k)
\longrightarrow
\ent(f^\sharp)\qquad \text{as }k\to\infty.
\]
It follows that
\[
\ent(f^\sharp)\geq \ent(f).
\]

We now prove the equality statement. First, suppose that $f$ is a translate of a radial function. Then its surface area pair is, up to translation, already invariant under the action of $\mathrm{O}(n)$. Hence, the Haar-averaged surface area pair defining $f^\sharp$ is the same pair. By the uniqueness part of the functional Minkowski problem, $f^\sharp$ is a translate of $f$. Therefore, $\ent(f^\sharp)=\ent(f)$.

Conversely, suppose that $\ent(f^\sharp)=\ent(f)$. Since the sequence $\ent(f_k)$ is monotone nondecreasing and converges to
$\ent(f^\sharp)$, we must have $\ent(f_k)=\ent(f_{k-1})$
 for every $k\geq 1$. For each $k$, equality in the application of Theorem~\ref{thm:entropy-concave} to $f_{k-1}$ and $R_{u_k}f_{k-1}$ therefore implies that
$f_{k-1}$ and $R_{u_k}f_{k-1}$ are translates of one another.

We claim that this implies that $f_k$ is a translate of $f_{k-1}$. Indeed, if
$R_{u_k}f_{k-1}$ is a translate of $f_{k-1}$, then $f_{k-1}$ and $R_{u_k}f_{k-1}$ have the same
surface area measures. Hence
\[
\left(\mu_{f_k},\nu_{f_k}\right)
=\frac12\left(\mu_{f_{k-1}},\nu_{f_{k-1}}\right)
+\frac12\left(\mu_{R_{u_k}f_{k-1}},\nu_{R_{u_k}f_{k-1}}\right)
=\left(\mu_{f_{k-1}},\nu_{f_{k-1}}\right).
\]
By uniqueness in the functional Minkowski problem, $f_k$ is a translate of $f_{k-1}$. Inductively, every $f_k$ is a translate of the original function $f$.

Since $f_{k-1}$ is a translate of $f$, and since $f_{k-1}$ is a translate of
$R_{u_k}f_{k-1}$, it follows that there exists an affine isometry $A_k$ of $\mathbb R^n$ whose
linear part is $R_{u_k}$ and such that
\[
f(A_k x)=f(x)
\qquad\text{for every }x\in\mathbb R^n.
\]
Indeed, this is just the conjugate, by a translation, of the relation saying that
$f_{k-1}$ and $R_{u_k}f_{k-1}$ are translates.

Choose $s>0$ such that the superlevel set
$K:=\{x\in\mathbb R^n:f(x)\geq s\}$ 
is a convex body. Since $f\in \LC_n$ has positive mass, such an $s$ exists. Each affine isometry $A_k$ preserves $f$, hence preserves $K$. Let $c$ be the centroid of $K$. Because affine
isometries preserving $K$ preserve its centroid, we have $A_k c=c$ for every $k$. Since the linear part of $A_k$ is $R_{u_k}$ and $A_k$ fixes $c$, we also have $A_k x=c+R_{u_k}(x-c)$ for every $x\in\R^n$. Thus,
\[
f(c+R_{u_k}x)=f(c+x)
\qquad\text{for all }x\in\R^n,\ k\geq 1.
\]
Set $g(x):=f(c+x)$. Then
\[
g(R_{u_k}x)=g(x)
\qquad\text{for every }x\in\R^n,\ k\geq 1.
\]
Hence, $g$ is invariant under the subgroup of $\mathrm{O}(n)$ generated by the reflections $R_{u_k}$. By construction, this subgroup is dense in $\mathrm{O}(n)$. We now show that this implies $g$ is invariant under all of $\mathrm{O}(n)$. Let $U\in \mathrm{O}(n)$ and choose a sequence $\{U_j\}$ from the generated subgroup with $U_j\to U$. Since $g(U_j x)=g(x)$ for all $j$, by the upper
semicontinuity of $g$ we obtain
\[
g(Ux)\geq \limsup_{j\to\infty}g(U_j x)=g(x).
\]
Applying the same argument to $U^{-1}$ gives the reverse inequality. Therefore,
\[
g(Ux)=g(x)
\qquad\text{for every }U\in \mathrm{O}(n),\ x\in\R^n.
\]
Thus, $g$ is radial. Since $g(x)=f(c+x)$, the original function $f$ is a translate of a radial function. This proves the equality characterization and completes the proof.
\end{proof}

\section{Functional affine surface area and affine isoperimetric inequalities}\label{sec:affine-surface-area}

Let $\mathcal{S}_c^n$ denote the set of all star bodies in $\R^n$ whose centroid is at the origin. Lutwak \cite{Lutwak-1991} defined the \emph{affine surface area} $\Omega(K)$ of $K\in\mathcal{K}^n$ by
\begin{equation}\label{eq:geo-as}
\Omega(K)=n^{\frac{1}{n+1}}\left(\inf_{Q\in\mathcal{S}_c^n}\left\{nV_1(K,Q^\circ)\vol_n(Q)^{\frac{1}{n}}\right\}\right)^{\frac{n}{n+1}}.
\end{equation}
\noindent Note that if $Q\in\mathcal{S}_c^n$, then $Q^\circ\in\mathcal{K}^n_o$. In terms of the surface area measure of $K$, this can also be expressed as
\begin{equation}\label{eq:geo-as-cvx}
\Omega(K)=n^{\frac{1}{n+1}}\left(\inf_{Q\in\mathcal{S}_c^n}\left\{n\vol_n(Q)^{\frac{1}{n}}\int_{\Sp}h_{Q^\circ}(u)\,dS_K(u)\right\}\right)^{\frac{n}{n+1}}.
\end{equation} 
Now observe that 
\[
\delta(\mathbbm{1}_K,\mathbbm{1}_{Q^\circ})=\int_{\Sp}h_{Q^\circ}\,dS_K=nV_1(K,Q^\circ).
\]

This motivates the following notion of functional affine surface area:

\begin{definition}[Functional affine surface area]
Let $f\in\LC_n$. The \emph{affine surface area of $f$} is
\[
\Omega_\sharp(f):=n^{\frac{1}{n+1}}\left(\inf_{Q\in\mathcal{S}_c^n}\left\{\delta(f,\mathbbm{1}_{Q^\circ})\vol_n(Q)^{\frac{1}{n}}\right\}\right)^{\frac{n}{n+1}}.
\]
Also, for fixed $Q\in\mathcal{S}_c^n$, we set
\[
\Omega_Q(f):=\delta(f,\mathbbm{1}_{Q^\circ})\vol_n(Q)^{\frac{1}{n}}
\]
so that  $\Omega_\sharp(f):=n^{\frac{1}{n+1}}(\inf_{Q\in\mathcal{S}_c^n}\Omega_Q(f))^{\frac{n}{n+1}}$. 
\end{definition}
Since $Q\in\mathcal{S}_c^n$, its polar $Q^\circ$ is convex, so $\mathbbm{1}_{Q^\circ}$ is a log-concave function. Hence $\Omega_\sharp(f)$ is well-defined.

\vspace{2mm}

Next, let us highlight some important properties of this functional affine surface area. 

\begin{proposition}\label{prop:affine-sa}
    Let $f\in\LC_n$. 
    \begin{itemize}
        \item[(i)] \rm{(Affine covariance of the first variation)}  If $T\in{\rm GL}_n(\R)$, then $\delta(f\circ T,g)=|\det T|^{-1}\delta(f,g\circ T^{-1})$. In particular, if $T\in{\rm SL}_n(\R)$, then $\delta(f\circ T,g)=\delta(f,g\circ T^{-1})$.
        
        \item[(ii)] {(Affine covariance of $\Omega_\sharp$)} If $T\in{\rm GL}_n(\R)$, then $\Omega_\sharp(f\circ T)=|\det T|^{-\frac{n-1}{n+1}}\Omega_\sharp(f)$. In particular, if $T\in{\rm SL}_n(\R)$ then $\Omega_\sharp(f\circ T)=\Omega_\sharp(f)$.

        \item[(iii)] \rm{(Upper semicontinuity)} 
The functional $\Omega_\sharp$ is upper semicontinuous with respect to cosmic convergence of surface area measures. More precisely, suppose that $f_k,f\in \LC_n$ and $(\mu_{f_k},\nu_{f_k})\to (\mu_f,\nu_f)$ cosmically as $k\to\infty$. Then, for every fixed $Q\in\mathcal{S}_c^n$, we have $\Omega_Q(f_k)\to \Omega_Q(f)$. Consequently,
\[
\limsup_{k\to\infty}\Omega_\sharp(f_k)\leq \Omega_\sharp(f).
\] 

        \item[(iv)] \rm{(Homogeneity)} If $\lambda>0$, then $\Omega_\sharp (\lambda f)=\lambda^{\frac{n}{n+1}}\Omega_\sharp(f)$.
    \end{itemize}

    \begin{proof}
(i) Let $f=e^{-\varphi}, g=e^{-\psi}\in\LC_n$ and set $K_f:=\supp(f)$. Throughout the proof, the normal and area element transformation formulas used hold for convex bodies at $\mathcal{H}^{n-1}$-a.e. boundary points (see, e.g., \cite[Theorem 25.5]{RockafellarBook}). The first variation can be expressed as
\begin{equation}\label{eq:first-variation-2}
\delta(f,g)=\int_{\R^n}h_g(\nabla\varphi(x))f(x)\,dx+\int_{\partial K_f}h_{\supp(g)}(n_K(y))f(y)\,d\mathcal{H}^{n-1}(y).
\end{equation}
For $T\in\mathrm{GL}_n$, set $\widetilde{f}:=f\circ T$ and $\widetilde{\varphi}:=\varphi\circ T$, and note that $\widetilde{f}=e^{-\widetilde{\varphi}}$. We need to compute $\delta(\widetilde{f},g)$.

We first consider the interior term (first integral) in \eqref{eq:first-variation-2}. By the chain rule, for every $x\in\R^n$ we have $\nabla\widetilde{\varphi}(x)=T^\top\nabla\varphi(Tx)$. Hence, substituting $y=Tx$ we derive that
\begin{align*}
    \int_{\R^n}h_g(\nabla\widetilde{\varphi}(x))\widetilde{f}(x)\,dx=\int_{\R^n}h_g(T^\top\nabla\varphi(Tx))f(Tx)\,dx=|\det(T)|^{-1}\int_{\R^n}h_g(T^\top\nabla\varphi(y))f(y)\,dy.
\end{align*}
Now $g\circ T^{-1}=e^{-(\psi\circ T^{-1})}$, so its support function is $h_{g\circ T^{-1}}=\mathcal{L}(\psi\circ T^{-1})$. For every $z\in\R^n$,
\[
\mathcal{L}(\psi\circ T^{-1})(z)=\sup_{x\in\R^n}\left(\langle z,x\rangle-\psi(T^{-1}x)\right).
\]
Substituting $x=Ty$, this becomes
\[
\mathcal{L}(\psi\circ T^{-1})(z)=\sup_{y\in\R^n}\left(\langle z,Ty\rangle-\psi(y)\right)=\sup_{y\in\R^n}\left(\langle T^\top z,y\rangle-\psi(y)\right)=(\mathcal{L}\psi)(T^\top z)=h_g(T^\top z).
\]
Therefore, $h_{g\circ T^{-1}}(z)=h_g(T^\top z)$, and hence the first integral becomes
\begin{equation}\label{eq:interior-eq-1}
    \int_{\R^n}h_g(\nabla\widetilde{\varphi}(x))\widetilde{f}(x)\,dx=|\det(T)|^{-1}\int_{\R^n}h_{g\circ T^{-1}}(\nabla\varphi(y))f(y)\,dy.
\end{equation}

For the boundary term in \eqref{eq:first-variation-2} (i.e., the second integral), set $\widetilde{K}_f:=\supp(\widetilde{f})$. For $y\in K_f$, the corresponding point on $\partial \widetilde{K}_f$ is $x=T^{-1}y$. Using the formulas
\[
n_{\widetilde{K}_f}(T^{-1}y)=\frac{T^\top n_K(y)}{|T^\top n_K(y)|}\quad\text{and}\quad d\mathcal{H}^{n-1}(T^{-1}y)=|\det(T)|^{-1}|T^\top n_K(y)|\,d\mathcal{H}^{n-1}(y),
\]
we get
\begin{align*}
    &\int_{\partial\widetilde{K}_f}h_{\supp(g)}(n_{\widetilde{K}_f}(x))\widetilde{f}(x)\,d\mathcal{H}^{n-1}(x) \\&=\int_{\partial K}h_{\supp(g)}\left(\frac{T^\top n_K(y)}{|T^\top n_K(y)|}\right)f(y)|\det(T)|^{-1}|T^\top n_K(y)|\,d\mathcal{H}^{n-1}(y)\\
    &=|\det(T)|^{-1}\int_{\partial K}h_{\supp(g)}(T^\top n_K(y))f(y)\,d\mathcal{H}^{n-1}(y).
\end{align*}
In the last line, we used the fact that the support function $h_{\supp(g)}$ is 1-homogeneous.  Now $\supp(g\circ T^{-1})=T(\supp(g))$, so for every $v\in\R^n$,
\[
h_{\supp(g\circ T^{-1})}(v)=h_{T(\supp(g))}(v)=h_{\supp(g)}(T^\top v).
\]
Thus, the previous integral becomes
\begin{equation}\label{eq:boundary-term-2}
\int_{\partial\widetilde{K}_f}h_{\supp(g)}(n_{\widetilde{K}_f}(x))\widetilde{f}(x)\,d\mathcal{H}^{n-1}(x) =|\det(T)|^{-1}\int_{\partial K}h_{\supp(g\circ T^{-1})}(n_K(y))f(y)\,d\mathcal{H}^{n-1}(y).
\end{equation}

Therefore, by \eqref{eq:interior-eq-1} and \eqref{eq:boundary-term-2}, we finally obtain
\begin{align*}
    &\delta(f\circ T,g)=\delta(\widetilde{f},g)\\
    &=|\det(T)|^{-1}\biggl[\int_{\R^n}h_{g\circ T^{-1}}(\nabla\varphi(y))f(y)\,dy+\int_{\partial K}h_{\supp(g\circ T^{-1})}(n_K(y))f(y)\,d\mathcal{H}^{n-1}(y)\biggr]\\
    &=|\det(T)|^{-1}\delta(f,g\circ T^{-1}).
\end{align*}

(ii) By the definition of $\Omega_\sharp$ and part (i),
\begin{align*}
    \Omega_\sharp(f\circ T)&=n^{\frac{1}{n+1}}\left(\inf_{Q\in\mathcal{S}_c^n}\left\{\delta(f\circ T,\mathbbm{1}_{Q^\circ})\vol_n(Q)^{\frac{1}{n}}\right\}\right)^{\frac{n}{n+1}}\\
    &=|\det(T)|^{-\frac{n}{n+1}}n^{\frac{1}{n+1}}\left(\inf_{Q\in\mathcal{S}_c^n}\left\{\delta(f,\mathbbm{1}_{Q^\circ}\circ T^{-1})\vol_n(Q)^{\frac{1}{n}}\right\}\right)^{\frac{n}{n+1}}.
\end{align*}
Now $\mathbbm{1}_{Q^\circ}\circ T^{-1}=\mathbbm{1}_{TQ^\circ}$ and $(T^{-\top}Q)^\circ=TQ^\circ$, so $\mathbbm{1}_{Q^\circ}\circ T^{-1}=\mathbbm{1}_{(T^{-\top}Q)^\circ}$. Set $P:=T^{-\top}Q$. Since $Q\in\mathcal{S}_c^n$ and $T\in\mathrm{GL}_n$, we have $P\in\mathcal{S}_c^n$. Moreover, $Q=T^\top P$ and $\vol_n(Q)=|\det(T)|\vol_n(P)$. Therefore,
\begin{align*}
    \Omega_\sharp(f\circ T)&=|\det(T)|^{-\frac{n}{n+1}}n^{\frac{1}{n+1}}\left(\inf_{P\in\mathcal{S}_c^n}\left\{\delta(f,\mathbbm{1}_{P^\circ})(|\det(T)|\vol_n(P))^{\frac{1}{n}}\right\}\right)^{\frac{n}{n+1}}\\
    &=|\det(T)|^{-\frac{n-1}{n+1}}n^{\frac{1}{n+1}}\left(\inf_{P\in\mathcal{S}_c^n}\left\{\delta(f,\mathbbm{1}_{P^\circ})\vol_n(P)^{\frac{1}{n}}\right\}\right)^{\frac{n}{n+1}}=|\det(T)|^{-\frac{n-1}{n+1}}\Omega_\sharp(f).
\end{align*}

(iii) Fix $Q\in\mathcal S_c^n$. By definition, $\Omega_Q(f)=\delta(f,\mathbbm{1}_{Q^\circ})\vol_n(Q)^{1/n}$. Since $Q\in\mathcal{S}_c^n$, its polar $Q^\circ$ is a convex body containing the origin in its
interior. Hence $\mathbbm{1}_{Q^\circ}\in \LC_n$, $h_{\mathbbm{1}_{Q^\circ}}=h_{Q^\circ}$, and $h_{\operatorname{supp}(\mathbbm{1}_{Q^\circ})}=h_{Q^\circ}$. 
Therefore, by the first variation formula,
\[
\delta(f_k,\mathbbm{1}_{Q^\circ})
=
\int_{\mathbb R^n} h_{Q^\circ}(x)\,d\mu_{f_k}(x)
+
\int_{\mathbb S^{n-1}} h_{Q^\circ}(\theta)\,d\nu_{f_k}(\theta).
\]
Likewise,
\[
\delta(f,\mathbbm{1}_{Q^\circ})
=
\int_{\mathbb R^n} h_{Q^\circ}(x)\,d\mu_f(x)
+
\int_{\mathbb S^{n-1}} h_{Q^\circ}(\theta)\,d\nu_f(\theta).
\]

The function $h_{Q^\circ}$ is continuous and positively $1$-homogeneous. Hence, it is an admissible test function for cosmic convergence. Since $(\mu_{f_k},\nu_{f_k})\to(\mu_f,\nu_f)$ cosmically, we obtain
\[
\int_{\R^n} h_{Q^\circ}(x)\,d\mu_{f_k}(x)
+\int_{\Sp} h_{Q^\circ}(\theta)\,d\nu_{f_k}(\theta)
\longrightarrow
\int_{\R^n} h_{Q^\circ}(x)\,d\mu_f(x)
+\int_{\Sp} h_{Q^\circ}(\theta)\,d\nu_f(\theta).
\]
Thus, $\delta(f_k,\mathbbm{1}_{Q^\circ})\to \delta(f,\mathbbm{1}_{Q^\circ})$. 
Multiplying by the fixed constant $\vol_n(Q)^{1/n}$, we get
\[
\Omega_Q(f_k)\longrightarrow \Omega_Q(f).
\]

It remains to pass from the pointwise convergence of $\Omega_Q$ to the upper semicontinuity
of $\Omega_\sharp$. Set
\[
I(f):=\inf_{Q\in\mathcal S_c^n}\Omega_Q(f).
\]
Then
\[
\Omega_\sharp(f)=n^{1/(n+1)}I(f)^{n/(n+1)}.
\]
Let $\varepsilon>0$. By the definition of the infimum, there exists 
$Q_\varepsilon\in\mathcal{S}_c^n$ such that
\[
\Omega_{Q_\varepsilon}(f)\leq I(f)+\varepsilon.
\]
For every $k\in\mathbb{N}$,
\[
I(f_k)=\inf_{Q\in\mathcal S_c^n}\Omega_Q(f_k)
\leq
\Omega_{Q_\varepsilon}(f_k).
\]
Taking the limit superior and using the convergence already proved for the fixed body
$Q_\varepsilon$, we get
\[
\limsup_{k\to\infty} I(f_k)
\leq
\lim_{k\to\infty}\Omega_{Q_\varepsilon}(f_k)=\Omega_{Q_\varepsilon}(f)
\leq I(f)+\varepsilon.
\]
Since $\varepsilon>0$ was arbitrary,
\[
\limsup_{k\to\infty} I(f_k)\leq I(f).
\]
Finally, the function $s\mapsto n^{1/(n+1)}s^{n/(n+1)}$ is increasing and continuous on
$[0,\infty)$, so
\[
\limsup_{k\to\infty}\Omega_\sharp(f_k)
=
\limsup_{k\to\infty}
n^{1/(n+1)}I(f_k)^{n/(n+1)}
\leq
n^{1/(n+1)}I(f)^{n/(n+1)}
=\Omega_\sharp(f).
\]
This proves the upper semicontinuity in (iii).

(iv) Let $\lambda>0$. By the definition of $\Omega_\sharp$ and the property $\delta(\lambda f,g)=\lambda\delta(f,g)$ (which follows from Theorem \ref{thm:rotem-falah-variation}),
\begin{align*}
    \Omega_{\sharp}(\lambda f)&=n^{\frac{1}{n+1}}\left(\inf_{Q\in\mathcal{S}_c^n}\left\{\delta(\lambda f,\mathbbm{1}_{Q^\circ})\vol_n(Q)^{\frac{1}{n}}\right\}\right)^{\frac{n}{n+1}}
    =\lambda^{\frac{n}{n+1}}\Omega_\sharp(f).
\end{align*}
    \end{proof}
\end{proposition}

We are now ready to prove the main result of this section, which is the following concavity property of the functional affine surface area. It states that $\Omega_\sharp$ is $\frac{n+1}{n}$-concave with respect to the functional Blaschke addition.

\begin{theorem}\label{thm:concavity-of-functional-affine-surface-area}
    For all $f_1,f_2\in\LC_n$ and all $t\in(0,1)$, we have
    \[
\Omega_\sharp\left((1-t)\odot f_1\sharp t\odot f_2\right)^{\frac{n+1}{n}} \geq (1-t)\Omega_\sharp(f_1)^{\frac{n+1}{n}}+t\Omega_\sharp(f_2)^{\frac{n+1}{n}}.
    \]
\end{theorem}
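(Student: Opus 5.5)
The plan is to use the fact that $\Omega_\sharp^{\frac{n+1}{n}}$ is, up to a constant, an infimum of functionals that are linear in the Blaschke structure. Then concavity follows because an infimum of affine functionals is concave. Concretely, raising the definition to the power $\frac{n+1}{n}$ gives
\[
\Omega_\sharp(f)^{\frac{n+1}{n}}=n^{\frac1n}\inf_{Q\in\mathcal S_c^n}\Omega_Q(f),
\qquad \Omega_Q(f)=\delta(f,\mathbbm{1}_{Q^\circ})\vol_n(Q)^{\frac1n}.
\]
The exponent $\frac{n+1}{n}$ exactly undoes the outer power $\frac{n}{n+1}$, and the prefactor becomes $n^{\frac{1}{n+1}\cdot\frac{n+1}{n}}=n^{1/n}$. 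So it suffices to prove
\[
\inf_Q\Omega_Q(h)\ge(1-t)\inf_Q\Omega_Q(f_1)+t\inf_Q\Omega_Q(f_2).
\]

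Set $h:=(1-t)\odot f_1\sharp t\odot f_2$, which lies in $\LC_n$ by Proposition \ref{prop:Blaschke-operations}(i). Fix $Q\in\mathcal S_c^n$; since $Q^\circ\in\mathcal K_o^n$, the function $g=\mathbbm{1}_{Q^\circ}$ is upper semicontinuous and log-concave. Combining Proposition \ref{prop:Blaschke-operations}(ii) and (iii) with this $g$ gives
\[
\delta(h,\mathbbm{1}_{Q^\circ})=(1-t)\delta(f_1,\mathbbm{1}_{Q^\circ})+t\,\delta(f_2,\mathbbm{1}_{Q^\circ}).
\]
The translation ambiguity in $h$ does not matter: the right side of Theorem \ref{thm:rotem-falah-variation} depends only on $(\mu_h,\nu_h)$, which are translation invariant, so $\delta(\cdot,g)$ is well defined on translation classes. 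Multiplying by $\vol_n(Q)^{1/n}$ yields the exact identity
\[
\Omega_Q(h)=(1-t)\Omega_Q(f_1)+t\Omega_Q(f_2).
\]
Each term on the right is bounded below by its own infimum over $\mathcal S_c^n$, so $\Omega_Q(h)\ge(1-t)\inf_Q\Omega_Q(f_1)+t\inf_Q\Omega_Q(f_2)$ for every $Q$. Taking the infimum over $Q$ on the left and multiplying by $n^{1/n}$ gives the claimed inequality.

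There is essentially no analytic obstacle; the only point needing care is bookkeeping. First, the exponent must be checked to convert the definition into a plain infimum with constant $n^{1/n}$. Second, the infima must be finite and nonnegative, so that no $\infty-\infty$ issue arises and the power map is monotone. Nonnegativity holds because $o\in Q^\circ$ forces $h_{Q^\circ}\ge0$, and with the first variation formula this gives $\delta\ge0$. Finiteness holds because $\mu_f$ has finite first moment and $h_{Q^\circ}$ grows at most linearly. Third, one should confirm that Proposition \ref{prop:Blaschke-operations}(ii)–(iii) applies to indicators of $Q^\circ$, which it does since they are upper semicontinuous and log-concave.
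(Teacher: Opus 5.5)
Your proposal is correct and follows essentially the same route as the paper: both rewrite $\Omega_\sharp^{\frac{n+1}{n}}$ as $n^{1/n}\inf_{Q}\Omega_Q$, use Proposition~\ref{prop:Blaschke-operations}(ii)--(iii) to get $\Omega_Q(h)=(1-t)\Omega_Q(f_1)+t\,\Omega_Q(f_2)$ for each fixed $Q$, and then bound each term below by its infimum before taking the infimum over $Q$. Your additional checks are sound but not needed for the argument: that $\delta(\cdot,g)$ is well defined on translation classes, and that the infima are finite and nonnegative.
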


\begin{remark}
    Let $K,L\in\mathcal{K}_o^n$ and choose $f_1=\mathbbm{1}_K,f_2=\mathbbm{1}_L$in Theorem \ref{thm:concavity-of-functional-affine-surface-area}. Invoking Proposition \ref{prop:indicators} and the homogeneity of $\Omega_\sharp$ under vertical scaling, from Theorem \ref{thm:concavity-of-functional-affine-surface-area} we recover Lutwak's result \cite[Proposition 4.5]{Lutwak-1991} for convex bodies, which states that for any $K,L\in\mathcal{K}^n$,
    \[
    \Omega(K\# L)^{\frac{n+1}{n}}\geq \Omega(K)^{\frac{n+1}{n}}+\Omega(L)^{\frac{n+1}{n}}.
    \]
\end{remark}

\begin{proof}[Proof of Theorem \ref{thm:concavity-of-functional-affine-surface-area}]
    Fix $Q\in\mathcal{S}_c^n$, and for $f\in\LC_n$, recall that  $\Omega_Q(f):=\delta(f,\mathbbm{1}_{Q^\circ})\vol_n(Q)^{\frac{1}{n}}$. By (ii) and (iii) of Proposition \ref{prop:Blaschke-operations}, the functional $\Omega_Q$ is $\sharp$-linear and $\odot$-homogeneous of degree 1, i.e., for all $f_1,f_2\in\LC_n$ and all $\lambda\geq 0$,
    \[
\Omega_Q(f_1\sharp f_2)=\Omega_Q(f_1)+\Omega_Q(f_2),\qquad \Omega_Q(\lambda\odot f)=\lambda \Omega_Q(f).
    \]
    Set $\widetilde{\Omega}_\sharp(f):=\Omega_\sharp(f)^{\frac{n+1}{n}}$, so that $\widetilde{\Omega}_\sharp(f)=n^{1/n}\inf_{Q\in\mathcal{S}_c^n}\Omega_Q(f)$. Also, for each $t\in(0,1)$ we set $f_t:=(1-t)\odot f_1\sharp t\odot f_2$. We will now compute $\widetilde{\Omega}_\sharp(f_t)$. By the linearity and homogeneity of $\delta(\cdot,g)$ with respect to the Blaschke operations,
    \begin{align*}
\Omega_Q(f_t)&=\delta((1-t)\odot f_1\sharp t\odot f_2,\mathbbm{1}_{Q^\circ})\vol_n(Q)^{\frac{1}{n}}\\\
&=\left[(1-t)\delta(f_1,\mathbbm{1}_{Q^\circ})+t\delta(f_2,\mathbbm{1}_{Q^\circ})\right]\vol_n(Q)^{\frac{1}{n}}\\
&=(1-t)\Omega_Q(f_1)+t\Omega_Q(f_2).
    \end{align*}
    Thus,
    \[
\widetilde{\Omega}_\sharp(f_t)=n^{\frac{1}{n}}\inf_{Q\in\mathcal{S}_c^n}\left[(1-t)\Omega_Q(f_1)+t\Omega_Q(f_2)\right].
    \]
    Now for every fixed $Q\in\mathcal{S}_c^n$, 
    \[
\Omega_Q(f_i) \geq \inf_{P\in\mathcal{S}_c^n}\Omega_P(f_i),\quad i=1,2,
    \]
    so
    \[
(1-t)\Omega_Q(f_1)+t\Omega_Q(f_2)\geq (1-t)\inf_{P\in\mathcal{S}_c^n}\Omega_P(f_1)+t\inf_{P'\in\mathcal{S}_c^n}\Omega_{P'}(f_2).
    \]
    Since this lower bound is valid for every $Q\in\mathcal{S}_c^n$, taking the infimum over all such $Q$ yields
    \begin{align*}
\widetilde{\Omega}_\sharp(f_t)&=n^{\frac{1}{n}}\inf_{Q\in\mathcal{S}_c^n}\left[(1-t)\Omega_Q(f_1)+t\Omega_Q(f_2)\right]\\
&\geq (1-t)n^{\frac{1}{n}}\inf_{P\in\mathcal{S}_c^n}\Omega_P(f_1)+tn^{\frac{1}{n}}\inf_{P'\in\mathcal{S}_c^n}\Omega_{P'}(f_2)=(1-t)\widetilde{\Omega}_\sharp(f_1)+t\widetilde{\Omega}_\sharp(f_2).
    \end{align*}
    Finally, substituting $\widetilde{\Omega}_\sharp(f)=\Omega_\sharp(f)^{\frac{n+1}{n}}$, we get
    \[
\Omega_\sharp\left((1-t)\odot f_1\sharp t\odot f_2\right)^{\frac{n+1}{n}} \geq (1-t)\Omega_\sharp(f_1)^{\frac{n+1}{n}}+t\Omega_\sharp(f_2)^{\frac{n+1}{n}}.
    \]
\end{proof}

As an immediate corollary, we derive that the  affine surface area is monotone increasing with respect to Blaschke symmetrization:
\begin{corollary}\label{cor:blaschke-monotone-as-1}
    Let $f\in\LC_n$ and $u\in\Sp$. Then
    \[
\Omega_\sharp(B_u^\sharp f)\geq \Omega_\sharp(f).
    \]
   Equality holds if $f$ is invariant under reflections about $u^\perp$, i.e., $f=R_u f$.
\end{corollary}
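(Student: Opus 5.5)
The plan is to apply Theorem \ref{thm:concavity-of-functional-affine-surface-area} with $t=\tfrac12$, $f_1=f$ and $f_2=R_u f$. By Definition \ref{def:mainDef}, $B_u^\sharp f=(\tfrac12\odot f)\sharp(\tfrac12\odot R_u f)$, so the theorem gives
\[
\Omega_\sharp(B_u^\sharp f)^{\frac{n+1}{n}}\geq \tfrac12\,\Omega_\sharp(f)^{\frac{n+1}{n}}+\tfrac12\,\Omega_\sharp(R_u f)^{\frac{n+1}{n}}.
\]
One point needs checking first: $\Omega_\sharp$ is invariant under translations, so the ambiguity in the choice of representative of $B_u^\sharp f$ does not matter. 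This holds because $\delta(f,g)$ depends only on $(\mu_f,\nu_f)$ by Theorem \ref{thm:rotem-falah-variation}, and these measures are unchanged by translating $f$.

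The key step is to show $\Omega_\sharp(R_u f)=\Omega_\sharp(f)$. I would use Proposition \ref{prop:affine-sa}(ii) with $T=R_u$. Since $R_u f=f\circ R_u$ and $|\det R_u|=1$, this gives $\Omega_\sharp(R_u f)=|\det R_u|^{-\frac{n-1}{n+1}}\Omega_\sharp(f)=\Omega_\sharp(f)$. Proposition \ref{prop:affine-sa}(ii) is stated for all of ${\rm GL}_n(\R)$, and $R_u$ has determinant $-1$, so the absolute value takes care of orientation reversal. Substituting into the display, the right-hand side becomes $\Omega_\sharp(f)^{\frac{n+1}{n}}$. Since $s\mapsto s^{\frac{n}{n+1}}$ is increasing on $[0,\infty)$, taking $\frac{n}{n+1}$-th powers yields $\Omega_\sharp(B_u^\sharp f)\geq\Omega_\sharp(f)$.

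For the equality statement, assume $f=R_u f$. Then Proposition \ref{prop-Blaschke-symmetral}(iii) shows that $B_u^\sharp f$ is a translate of $f$, and translation invariance of $\Omega_\sharp$ gives equality. There is no real obstacle here. The only point that needs care is confirming that $\Omega_\sharp$ is well defined on translation classes, which follows from the translation invariance of the surface area measures as noted above.
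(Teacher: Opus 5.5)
Your proposal is correct and matches the paper's proof. Both apply Theorem \ref{thm:concavity-of-functional-affine-surface-area} with $t=\tfrac12$, $f_1=f$, $f_2=R_u f$, use $\Omega_\sharp(R_u f)=\Omega_\sharp(f)$, and obtain the equality case from Proposition \ref{prop-Blaschke-symmetral}(iii). Your use of Proposition \ref{prop:affine-sa}(ii) with $|\det R_u|=1$, and your remark on translation invariance, make explicit two points the paper leaves implicit.
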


\begin{proof}
    The desired inequality follows the definition of $B_u^\sharp f$ and choosing $f_1=f, f_2=R_u f\in\LC_n$ and $t=1/2$ in Theorem \ref{thm:concavity-of-functional-affine-surface-area}, and we also use $\Omega_\sharp(R_u f)=\Omega_\sharp(f)$. If $f=R_u f$, then by Proposition \ref{prop-Blaschke-symmetral}(iii) we have $B_u^\sharp f=f$. Therefore, $\Omega_\sharp(B_u^\sharp f)=\Omega_\sharp(f)$.
\end{proof}

We now have all of the ingredients to prove the following affine isoperimetric inequality (comparison theorem):
\begin{theorem}\label{thm:blaschke-monotone-as}
For every $f\in\LC_n$,
\[
\Omega_\sharp(f)\leq\Omega_\sharp(f^\sharp).
\]
Equality holds if $f$ is radial.
\end{theorem}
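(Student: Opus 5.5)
The approach is to combine the monotonicity of $\Omega_\sharp$ under a single Blaschke symmetrization (Corollary \ref{cor:blaschke-monotone-as-1}) with the convergence of successive symmetrizations (Theorem \ref{thm:Blaschke=convergence-thm}) and the upper semicontinuity of $\Omega_\sharp$ (Proposition \ref{prop:affine-sa}(iii)).

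\textbf{Step 1: monotonicity along the iteration.} Fix $f\in\LC_n$. Choose directions $u_1,u_2,\ldots$ as in Theorem \ref{thm:Blaschke=convergence-thm}, with the reflections generating a dense subgroup of $\mathrm{O}(n)$. Set $f_k:=B^\sharp_{u_k}\cdots B^\sharp_{u_1}f$. Applying Corollary \ref{cor:blaschke-monotone-as-1} repeatedly gives
\[
\Omega_\sharp(f)\le\Omega_\sharp(f_1)\le\cdots\le\Omega_\sharp(f_k).
\]
Translations do not change the surface area measures, so they do not change $\delta(\cdot,g)$ or $\Omega_\sharp$. Hence $\Omega_\sharp(\widetilde f_k)=\Omega_\sharp(f_k)$.

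\textbf{Step 2: passing to the limit.} I will not pass through the hypo-convergence of $\widetilde f_k$ to $f^\sharp$. Instead I will use the structure of the surface area measures directly. The measures of $f_k$ are $(\mu_{f_k},\nu_{f_k})=\big(\int U_\#\mu_f\,d\eta_k(U),\int U_\#\nu_f\,d\eta_k(U)\big)$. Here $\eta_k$ is the probability measure on $\mathrm{O}(n)$ obtained by averaging the $2^k$ products of the reflections $R_{u_j}$ (or the identity). The appendix proof of Theorem \ref{thm:Blaschke=convergence-thm} presumably shows that $\eta_k$ converges weakly to the Haar measure $\eta$.

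For each fixed $Q\in\mathcal S_c^n$, the function $U\mapsto\int h_{Q^\circ}\,d(U_\#\mu_f)+\int h_{Q^\circ}\,d(U_\#\nu_f)$ is continuous on the compact group $\mathrm{O}(n)$. Continuity follows by dominated convergence: $h_{Q^\circ}(Ux)\le C|x|$ and $\mu_f$ has a finite first moment. Weak convergence of $\eta_k$ therefore gives $\Omega_Q(f_k)\to\Omega_Q(f^\sharp)$ for every fixed $Q$. This is exactly the conclusion of Proposition \ref{prop:affine-sa}(iii), and the rest of that proof then yields
\[
\limsup_k\Omega_\sharp(f_k)\le\Omega_\sharp(f^\sharp).
\]
Combined with Step 1, this gives $\Omega_\sharp(f)\le\Omega_\sharp(f^\sharp)$.

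Alternatively, if the appendix proof of Theorem \ref{thm:Blaschke=convergence-thm} already yields cosmic convergence $(\mu_{\widetilde f_k},\nu_{\widetilde f_k})\to(\mu_{f^\sharp},\nu_{f^\sharp})$, I will invoke Proposition \ref{prop:affine-sa}(iii) directly. The Falah--Rotem continuity theorem relating hypo-convergence with uniform coercivity to cosmic convergence of the measures would give this.

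\textbf{Step 3: equality case.} If $f$ is radial, then $U_\#\mu_f=\mu_f$ and $U_\#\nu_f=\nu_f$ for all $U\in\mathrm{O}(n)$. This uses the reflection formula \eqref{eq:SA-measures-reflection}, since $\mathrm{O}(n)$ is generated by reflections. So the Haar average equals $(\mu_f,\nu_f)$. By the uniqueness in Theorem \ref{thm:rotem-falah-main}, $f^\sharp$ is a translate of $f$, and translation invariance of $\Omega_\sharp$ gives equality.

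\textbf{Main obstacle.} The main obstacle is Step 2: justifying convergence of $\Omega_Q(f_k)$ for each fixed $Q$, i.e.\ weak convergence of the averaging measures $\eta_k$ to Haar measure, or equivalently cosmic convergence of the surface area pairs. I expect to extract this from the construction in the proof of Theorem \ref{thm:Blaschke=convergence-thm}, where the limit pair is identified as the Haar average. Only upper semicontinuity is needed, so convergence along a subsequence suffices, since the sequence $\Omega_\sharp(f_k)$ is monotone.
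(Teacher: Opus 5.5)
Your argument is correct. Its skeleton is the paper's: monotonicity along the iterates via Corollary~\ref{cor:blaschke-monotone-as-1}, then an upper-semicontinuity passage to the limit. The difference is in how the limit is justified.

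The paper takes the hypo-convergence $\widetilde f_k\to f^\sharp$ from Theorem~\ref{thm:Blaschke=convergence-thm} as a black box. It then invokes Theorem 1.10 of Falah--Rotem to turn this back into cosmic convergence of the surface area pairs, and finally applies Proposition~\ref{prop:affine-sa}(iii). This is exactly your ``alternatively'' route.

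Your primary route works directly on $\mathrm{O}(n)$. It combines the weak convergence of your $\eta_k$ (the paper's $\alpha_k$) to Haar measure, which is Step 2 of the appendix via Kawada--It\^o, with continuity of $U\mapsto\int h_{Q^\circ}\,d(U_\#\mu_f)+\int h_{Q^\circ}\,d(U_\#\nu_f)$. That is Step 4 of the appendix specialized to the test function $\xi=h_{Q^\circ}$.

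What your route buys is independence from hypo-convergence. It avoids the round trip cosmic $\to$ hypo $\to$ cosmic through the Falah--Rotem continuity theorem. It also shows that the inequality depends on $f^\sharp$ only through its Haar-averaged pair $(\bar\mu,\bar\nu)$.

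The price is that you rely on the internals of the appendix rather than the statement of the theorem. In particular, you must use the specific periodic sequence constructed there. Density of the group generated by the $R_{u_k}$ does not by itself force $\alpha_k\to\eta$: a sequence that eventually repeats a single direction $u$ makes $\alpha_k$ stabilize at a finitely supported measure, because $\eta_u*\eta_u=\eta_u$.

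Your equality argument is also more explicit than the paper's one-line remark. Radiality gives $\mathrm{O}(n)$-invariance of $(\mu_f,\nu_f)$, so the Haar average is $(\mu_f,\nu_f)$ itself, and by Theorem~\ref{thm:rotem-falah-main} $f^\sharp$ is a translate of $f$.
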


\begin{proof}
 By Theorem \ref{thm:Blaschke=convergence-thm}, there exists a sequence of iterated Blaschke symmetrizations $\tilde{f}_k(x)=f_k(x+v_k)$, where $f_k=B_{u_k}^\sharp\cdots B_{u_1}^\sharp f$ and with corresponding translations  $v_k\in\R^n$, that hypo-converges to the mean Blaschke symmetral $f^\sharp\in\LC_n$. Since translations do not change the surface area measures, we have
\[
\Omega_\sharp(\widetilde f_k)=\Omega_\sharp(f_k)
\]
for every $k$. Moreover, by \cite[Theorem 1.10]{Falah-Rotem}, the hypo-convergence $\widetilde f_k\to f^\sharp$ implies the cosmic convergence
\[
(\mu_{\widetilde f_k},\nu_{\widetilde f_k})
\longrightarrow(\mu_{f^\sharp},\nu_{f^\sharp}).
\]
Hence, by Proposition \ref{prop:affine-sa}(iii) we get
\[
\Omega_\sharp(f^\sharp)\geq \limsup_{k\to\infty}\Omega_\sharp(\widetilde f_k)
=\limsup_{k\to\infty}\Omega_\sharp(f_k).
\]
By Corollary \ref{cor:blaschke-monotone-as-1}, the sequence $\Omega_\sharp(f_k)$ is nondecreasing, and therefore
\[
\Omega_\sharp(f^\sharp)\geq\limsup_{k\to\infty}\Omega_\sharp(\tilde{f}_k)\geq\limsup_{k\to\infty}\Omega_\sharp(f)=\Omega_\sharp(f).
    \]
    Equality holds in each inequality if $f$ is radial.
\end{proof}

Next we compute the affine surface area of the mean Blaschke symmetral. 
\begin{lemma}\label{lem:as-hypo-symmetral}
    For every $f\in\LC_n$, we have
    \begin{equation}
        \Omega_\sharp(f^\sharp) = (n\omega_n)^{\frac{1}{n+1}}W_1(f)^{\frac{n}{n+1}}.
    \end{equation}
\end{lemma}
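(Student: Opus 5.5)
The plan is to compute the infimum defining $\Omega_\sharp(f^\sharp)$ directly, using the rotation invariance of the surface area measures of $f^\sharp$. We will show that for every $Q\in\mathcal{S}_c^n$,
\[
\delta(f^\sharp,\mathbbm{1}_{Q^\circ})=W_1(f)\,\overline{h}(Q^\circ),\qquad \overline{h}(Q^\circ):=\frac{1}{n\omega_n}\int_{\Sp}h_{Q^\circ}(\theta)\,d\theta .
\]
We will then show that $\inf_{Q\in\mathcal{S}_c^n}\overline{h}(Q^\circ)\vol_n(Q)^{1/n}=\omega_n^{1/n}$, with the infimum attained at $Q=B_2^n$. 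Granting both facts,
\[
\Omega_\sharp(f^\sharp)=n^{\frac{1}{n+1}}\bigl(\omega_n^{1/n}W_1(f)\bigr)^{\frac{n}{n+1}}=(n\omega_n)^{\frac{1}{n+1}}W_1(f)^{\frac{n}{n+1}},
\]
which is the claim.

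\emph{Step 1: the reduction to mean width.} By Theorem \ref{thm:rotem-falah-variation}, with $h_{\mathbbm{1}_{Q^\circ}}=h_{\supp(\mathbbm{1}_{Q^\circ})}=h_{Q^\circ}$, we have $\delta(f^\sharp,\mathbbm{1}_{Q^\circ})=\int h_{Q^\circ}\,d\mu_{f^\sharp}+\int h_{Q^\circ}\,d\nu_{f^\sharp}$. We substitute the Haar-averaged measures from Theorem \ref{thm:Blaschke=convergence-thm} and apply Fubini (everything is nonnegative, since $o\in\operatorname{int}Q^\circ$). This turns both integrals into integrals against $\mu_f$ and $\nu_f$ of $x\mapsto\int_{\mathrm{O}(n)}h_{Q^\circ}(Ux)\,d\eta(U)$.

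For fixed $x$, the pushforward of $\eta$ under $U\mapsto Ux$ is the normalized uniform measure on the sphere $|x|\Sp$. Combined with the $1$-homogeneity of $h_{Q^\circ}$, this gives $\int_{\mathrm{O}(n)}h_{Q^\circ}(Ux)\,d\eta(U)=|x|\,\overline{h}(Q^\circ)$. For $\theta\in\Sp$ the same computation gives $\overline{h}(Q^\circ)$. Hence
\[
\delta(f^\sharp,\mathbbm{1}_{Q^\circ})=\overline{h}(Q^\circ)\Bigl(\int|x|\,d\mu_f+\nu_f(\Sp)\Bigr)=\overline{h}(Q^\circ)\,W_1(f),
\]
by Remark \ref{rmk:SA}. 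Equivalently, by Proposition \ref{prop:hypo-symmetral-preserves-SA} this equals $\overline{h}(Q^\circ)\,W_1(f^\sharp)$.

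\emph{Step 2: the geometric extremal problem.} The upper bound comes from taking $Q=B_2^n$: then $h_{Q^\circ}\equiv1$, so $\overline{h}(Q^\circ)\vol_n(Q)^{1/n}=\omega_n^{1/n}$. For the lower bound we would use Urysohn's inequality for the convex body $Q^\circ$, which gives $\overline{h}(Q^\circ)\geq(\vol_n(Q^\circ)/\omega_n)^{1/n}$. We would combine this with a Blaschke--Santaló inequality $\vol_n(Q)\vol_n(Q^\circ)\leq\omega_n^2$, valid for star bodies with centroid at the origin. Together these give $\overline{h}(Q^\circ)\vol_n(Q)^{1/n}\geq\omega_n^{1/n}$.

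The main obstacle is this Santaló step, because $Q$ is only a star body. The centroid of $\operatorname{conv}Q$ need not be at the origin, so the classical convex-body statement cannot be applied to $\operatorname{conv}Q$ directly. I would first invoke Lutwak's star-body version of the Blaschke--Santaló inequality for centroid-centered star bodies, which is the same inequality Lutwak uses in \cite{Lutwak-1991} to evaluate $\Omega(B_2^n)$. A consistency check is available: for $f=\mathbbm{1}_{B_2^n}$ the formula must reproduce Lutwak's value of $\Omega(B_2^n)$. Failing that, I would look for an alternative argument through the centroid-centered Santaló inequality applied with $\vol_n(Q)\leq\vol_n(\operatorname{conv}Q)$.
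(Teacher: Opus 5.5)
Your Step 1 is correct and reaches the same identity as the paper, $\delta(f^\sharp,\mathbbm{1}_{Q^\circ})=\frac{W_1(f)}{n\omega_n}\int_{\Sp}h_{Q^\circ}\,d\sigma$. The paper gets it from radiality of $f^\sharp$, Proposition~\ref{prop:affine-sa}(i) and averaging over $\mathrm{O}(n)$; substituting the Haar-averaged measures directly, as you do, works equally well. The upper bound from $Q=B_2^n$ is also fine.

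The genuine gap is the lower bound in Step 2: Urysohn and Blaschke--Santal\'o point in incompatible directions. Urysohn gives $\overline h(Q^\circ)\vol_n(Q)^{1/n}\geq\omega_n^{-1/n}\bigl(\vol_n(Q)\vol_n(Q^\circ)\bigr)^{1/n}$. To reach $\omega_n^{1/n}$ from this you would need $\vol_n(Q)\vol_n(Q^\circ)\geq\omega_n^2$. That is the \emph{reverse} of Santal\'o, and it is false: for the square $[-1,1]^2$ the product is $8<\pi^2$. Put differently, by Santal\'o the bound you need, $\overline h(Q^\circ)\geq(\omega_n/\vol_n(Q))^{1/n}$, is \emph{stronger} than Urysohn's $\overline h(Q^\circ)\geq(\vol_n(Q^\circ)/\omega_n)^{1/n}$. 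So no version of Santal\'o can close this chain.

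The missing idea is the one the paper uses: H\"older, i.e.\ the power-mean inequality on the sphere,
$\frac{1}{n\omega_n}\int_{\Sp}h_{Q^\circ}\,d\sigma\geq\bigl(\frac{1}{n\omega_n}\int_{\Sp}h_{Q^\circ}^{-n}\,d\sigma\bigr)^{-1/n}$.
Combine it with $h_{Q^\circ}^{-1}=\rho_Q$, so that $\frac1n\int_{\Sp}h_{Q^\circ}^{-n}\,d\sigma=\vol_n(Q)$. This gives $\overline h(Q^\circ)\vol_n(Q)^{1/n}\geq\omega_n^{1/n}$ at once, with equality iff $h_{Q^\circ}$ is constant. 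It needs neither the centroid condition nor Santal\'o.

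Your worry about non-convex star bodies is well founded, but a star-body Santal\'o inequality cannot repair it. Your fallback $\vol_n(Q)\leq\vol_n(\operatorname{conv}Q)$ also points the wrong way for a lower bound. For a star body, $Q^\circ=(\operatorname{conv}Q)^\circ$, so $h_{Q^\circ}^{-1}=\rho_{\operatorname{conv}Q}$ and $\frac1n\int_{\Sp}h_{Q^\circ}^{-n}\,d\sigma=\vol_n(\operatorname{conv}Q)\geq\vol_n(Q)$.

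In fact the bound fails outright. Take an origin-symmetric star body made of $\epsilon B_2^n$ plus very thin spikes out to a fine symmetric net of $\Sp$. Its convex hull is close to $B_2^n$, so $\overline h(Q^\circ)$ is close to $1$, while $\vol_n(Q)$ is close to $\epsilon^n\omega_n$. Hence $\inf_{Q\in\mathcal{S}_c^n}\overline h(Q^\circ)\vol_n(Q)^{1/n}=0$, and with the literal star-body infimum the left-hand side of the lemma would be $0$.

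The paper's identity $h_{Q^\circ}^{-1}=\rho_Q$ holds only for convex $Q$. So the argument, and the stated value, are correct for the infimum over convex $Q$ with $o$ in the interior. They are also correct if $\rho_Q^{-1}$ replaces $h_{Q^\circ}$ (the two agree for convex $Q$); the same H\"older computation then goes through for all star bodies.
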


The proof of Lemma \ref{lem:as-hypo-symmetral} can be found in the appendix. We thus obtain the following affine isoperimetric inequality.
\begin{corollary}\label{cor:isoperimetric-ineq-as}
    For every $f\in\LC_n$, we have
    \[
    \Omega_\sharp(f)\leq (n\omega_n)^{\frac{1}{n+1}}W_1(f)^{\frac{n}{n+1}}.
    \]
Equality holds if $f$ is radial.
\end{corollary}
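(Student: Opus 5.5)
This follows by chaining Theorem \ref{thm:blaschke-monotone-as} with Lemma \ref{lem:as-hypo-symmetral}. First, Theorem \ref{thm:blaschke-monotone-as} gives $\Omega_\sharp(f)\leq\Omega_\sharp(f^\sharp)$, where $f^\sharp$ is the mean Blaschke symmetral produced in Theorem \ref{thm:Blaschke=convergence-thm}. Second, Lemma \ref{lem:as-hypo-symmetral} evaluates the right-hand side as $\Omega_\sharp(f^\sharp)=(n\omega_n)^{\frac{1}{n+1}}W_1(f)^{\frac{n}{n+1}}$. Here $W_1(f)$ rather than $W_1(f^\sharp)$ appears because of Proposition \ref{prop:hypo-symmetral-preserves-SA}. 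Combining the two displays yields the inequality. All the analytic work (convergence of the symmetrization scheme, upper semicontinuity of $\Omega_\sharp$ under cosmic convergence, and the computation for radial functions) is already contained in these earlier results.

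For the equality statement, assume $f$ is radial. Then every $U\in\mathrm{O}(n)$ satisfies $f\circ U=f$. By the reflection computation \eqref{eq:SA-measures-reflection}, extended verbatim to arbitrary orthogonal maps, this gives $U_\#\mu_f=\mu_f$ and $U_\#\nu_f=\nu_f$. The Haar averages in Theorem \ref{thm:Blaschke=convergence-thm} therefore return $(\mu_f,\nu_f)$ itself. By the uniqueness part of Theorem \ref{thm:rotem-falah-main}, $f^\sharp$ is a translate of $f$.

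Translations leave the surface area measures unchanged, and hence leave $\Omega_\sharp$ unchanged, since $\Omega_\sharp$ depends on $f$ only through $\delta(f,\cdot)$. It follows that $\Omega_\sharp(f)=\Omega_\sharp(f^\sharp)=(n\omega_n)^{\frac{1}{n+1}}W_1(f)^{\frac{n}{n+1}}$. Alternatively, one can apply Lemma \ref{lem:as-hypo-symmetral} directly, noting that $f$ and $f^\sharp$ share surface area measures.

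None of these steps is a real obstacle, since the substantive content is deferred to Lemma \ref{lem:as-hypo-symmetral} and Theorem \ref{thm:Blaschke=convergence-thm}. The one point to state explicitly is the invariance $\mu_{f\circ U}=U^{-1}_\#\mu_f$ (and likewise for $\nu$) for general orthogonal $U$, not just reflections. This holds because the proof of \eqref{eq:SA-measures-reflection} used only that $R_u$ is orthogonal; alternatively, $\mathrm{O}(n)$ is generated by reflections.
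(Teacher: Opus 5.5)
Your proof is correct and is essentially the paper's. The inequality comes from chaining Theorem~\ref{thm:blaschke-monotone-as} with Lemma~\ref{lem:as-hypo-symmetral} and Proposition~\ref{prop:hypo-symmetral-preserves-SA}, and your justification of the equality case (a radial $f$ has $\mathrm{O}(n)$-invariant surface area measures, so $f^\sharp$ is a translate of $f$ by Minkowski uniqueness) is a more explicit version of the paper's appeal to the equality clause of Theorem~\ref{thm:blaschke-monotone-as}.
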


\begin{proof}
    Since $f^\sharp$ is radial and $W_1(f)=W_1(f^\sharp)$, we have
    \[
\Omega_\sharp(f^\sharp)= (n\omega_n)^{\frac{1}{n+1}}W_1(f^\sharp)^{\frac{n}{n+1}}=(n\omega_n)^{\frac{1}{n+1}}W_1(f)^{\frac{n}{n+1}}.
    \]
    The desired inequality now follows from  Theorem \ref{thm:blaschke-monotone-as} and Lemma \ref{lem:as-hypo-symmetral}. If $f$ is radial, then by Theorem \ref{thm:blaschke-monotone-as} we have $\Omega_\sharp(f)=\Omega_\sharp(f^\sharp)$ and thus $\Omega_\sharp(f)=(n\omega_n)^{\frac{1}{n+1}}W_1(f)^{\frac{n}{n+1}}$.
\end{proof}

Given a convex body $K\in\mathcal{K}^n$, the \emph{geominimal surface area} $G(K)$ of $K$ is
\[
G(K) = \inf_{Q\in\mathcal{K}_c^n}\left\{nV_1(K,Q^\circ)\left(\frac{\vol_n(Q)}{\vol_n(B_2^n)}\right)^{\frac{1}{n}}\right\}.
\]
This concept was introduced by Petty \cite{Petty-1974} (see also Lutwak \cite{Lutwak-1991}). This leads to the following functional version, see \cite{Fang-Yang}.

\begin{definition}\label{def:functional-geominimal-surface-area}
    Let $f\in\LC_n$. The \emph{geominimal surface area} $G_\sharp(f)$ of $f$ is
    \[
G_\sharp(f):=\inf_{Q\in\mathcal{K}_c^n}\left\{\delta(f,\mathbbm{1}_{Q^\circ})\left(\frac{\vol_n(Q)}{\vol_n(B_2^n)}\right)^{\frac{1}{n}}\right\}.
    \]
\end{definition}
This version of functional geominimal surface area $G_\sharp$ can be obtained from the one in \cite{Fang-Yang} by restricting their variational problem to test functions of the form $\mathbbm{1}_{Q^\circ}$, $Q\in\mathcal{K}_c^n$, and differs only by the normalization factor $\omega_n^{-1/n}$. Note that for a convex body $K\in\mathcal{K}^n$ and $f=\mathbbm{1}_K$, we recover the classical geominimal surface area, i.e.,  $G_\sharp(\mathbbm{1}_K)=G(K)$.

Petty \cite{Petty-1974}  proved that for any convex body $K\in\mathcal{K}^n$,
\[
\Omega(K)^{n+1}\leq n\omega_n G(K)^n.
\]
Lutwak later gave a new proof of this result in \cite[Proposition (6.4)]{Lutwak-1991}. We prove a functional analogue in the following
\begin{proposition}\label{prop:geo-as-ineq}
    For all $f\in\LC_n$,
    \[
\Omega_\sharp(f)^{n+1} \leq n\omega_n G_\sharp(f)^n.
    \]
\end{proposition}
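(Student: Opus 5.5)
Compare the two infima term by term, as in Lutwak's proof. Write $I_\Omega(f):=\inf_{Q\in\mathcal S_c^n}\delta(f,\mathbbm{1}_{Q^\circ})\vol_n(Q)^{1/n}$, so that $\Omega_\sharp(f)^{n+1}=n\,I_\Omega(f)^n$. Similarly, $G_\sharp(f)=\omega_n^{-1/n}I_G(f)$, where $I_G$ is the analogous infimum over $\mathcal K_c^n$. Then
\[
n\omega_n G_\sharp(f)^n=n\,I_G(f)^n .
\]
The claimed inequality is therefore equivalent to $I_\Omega(f)^n\le I_G(f)^n$, i.e.\ to $I_\Omega(f)\le I_G(f)$.

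To get this, note that a convex body with centroid at the origin is in particular a star body with centroid at the origin. (Here $\mathcal K_c^n$ is read as convex bodies with centroid at the origin, hence containing $o$ in the interior.) So $\mathcal K_c^n\subset\mathcal S_c^n$. For each $Q\in\mathcal K_c^n$ the quantity $\delta(f,\mathbbm{1}_{Q^\circ})\vol_n(Q)^{1/n}$ is literally the same in both definitions. An infimum over a larger class is no larger, so $I_\Omega(f)\le I_G(f)$.

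Raising both sides to the $n$-th power (both are nonnegative, since $\delta(f,\mathbbm{1}_{Q^\circ})\ge0$ because $h_{Q^\circ}>0$ away from the origin) and multiplying by $n$ gives
\[
\Omega_\sharp(f)^{n+1}=n I_\Omega(f)^n\le nI_G(f)^n=n\omega_n G_\sharp(f)^n .
\]

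The only point needing care is the normalization bookkeeping, namely checking that the factors $n^{1/(n+1)}$ and $\omega_n^{1/n}$ combine exactly into $n\omega_n$. Explicitly, $\Omega_\sharp^{n+1}=n\,I_\Omega^{n}$ and $G_\sharp^n=I_G^n/\omega_n$. A second point is confirming that $\mathcal K_c^n$ is meant with centroid at the origin, so that the inclusion into $\mathcal S_c^n$ holds. If instead $\mathcal K_c^n$ were not contained in $\mathcal S_c^n$, I would translate $Q$ to have centroid at the origin. This requires showing that $\delta(f,\mathbbm{1}_{(Q-c)^\circ})$ does not increase under such recentering, which would use the centering condition (ii) of Theorem \ref{thm:rotem-falah-main}. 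I expect the direct inclusion to suffice.
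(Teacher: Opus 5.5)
Your proposal is correct and is essentially the paper's proof: the inclusion $\mathcal{K}_c^n\subset\mathcal{S}_c^n$ makes the infimum defining $\Omega_\sharp(f)$ no larger than the one defining $G_\sharp(f)$, and the normalizations combine into the factor $n\omega_n$ exactly as you computed. The fallback recentering argument you sketch is not needed.
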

Choosing the indicator function $f=\mathbbm{1}_K$ where $K\in\mathcal{K}^n$, we recover \cite[Proposition (6.4)]{Lutwak-1991}.

\begin{proof}
    Since $\mathcal{K}_c^n\subset\mathcal{S}_c^n$, by the definitions of $G_\sharp(f)$ and $\Omega_\sharp(f)$, for all $f\in\LC_n$ we have
    \begin{align*}
        G_\sharp(f) &=\inf_{Q\in\mathcal{K}_c^n}\left\{\delta(f,\mathbbm{1}_{Q^\circ})\left(\frac{\vol_n(Q)}{\vol_n(B_2^n)}\right)^{\frac{1}{n}}\right\}\\
        &\geq \inf_{Q\in\mathcal{S}_c^n}\left\{\delta(f,\mathbbm{1}_{Q^\circ})\left(\frac{\vol_n(Q)}{\vol_n(B_2^n)}\right)^{\frac{1}{n}}\right\}
        =(n\omega_n)^{-\frac{1}{n}}\Omega_\sharp(f)^{\frac{n+1}{n}}.
    \end{align*}
    Rearranging terms, we obtain the desired inequality.
\end{proof}

\subsection{A general comparison principle}

We conclude this section with a general comparison principle for a log-concave function and its mean Blaschke symmetral. 

\begin{theorem}\label{thm:meta-theorem}
     Let $F:\LC_n\to[0,\infty)$.
     \begin{itemize}
\item[(i)] Suppose that $F$ is upper semicontinuous with respect to the topology of hypo-convergence on $\LC_n$, invariant under reflections and translations, and increasing with respect to  Blaschke  symmetrizations, i.e., $F(g)\leq F(B_u^\sharp g)$ for all $g\in\LC_n$ and all $u\in\Sp$. Then for all $f\in\LC_n$, we have $F(f) \leq F(f^\sharp)$.

\item[(ii)] Suppose that $F$ is lower semicontinuous  with respect to the topology of hypo-convergence on $\LC_n$, invariant under reflections and translations, and decreasing with respect to  Blaschke  symmetrizations, i.e., $F(g)\geq F(B_u^\sharp g)$ for all  $g\in\LC_n$ and all $u\in\Sp$. Then for all $f\in\LC_n$, we have $F(f) \geq F(f^\sharp)$.
\end{itemize}
\end{theorem}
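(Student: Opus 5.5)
The plan is to mimic the proof of Theorem \ref{thm:blaschke-monotone-as}, replacing the specific functional $\Omega_\sharp$ by the abstract $F$. For part (i), fix $f\in\LC_n$ and invoke Theorem \ref{thm:Blaschke=convergence-thm}. It gives directions $u_1,u_2,\ldots\in\Sp$ and translations $v_1,v_2,\ldots\in\R^n$ such that, with $f_0:=f$ and $f_k:=B_{u_k}^\sharp f_{k-1}$, the translates $\widetilde f_k(x):=f_k(x+v_k)$ hypo-converge to the mean Blaschke symmetral $f^\sharp\in\LC_n$.

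The first step is monotonicity along the sequence. By hypothesis $F(g)\le F(B_u^\sharp g)$ for every $g\in\LC_n$ and every $u\in\Sp$, so $F(f_{k-1})\le F(f_k)$ for all $k\ge1$. By induction, $F(f)\le F(f_k)$ for every $k$.

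One subtlety is that $B_u^\sharp g$ is only defined up to translation. The hypothesis of translation invariance ensures that $F(B_u^\sharp g)$ does not depend on the representative chosen, and that $F(\widetilde f_k)=F(f_k)$. The reflection invariance hypothesis is not strictly needed for this chain. It is consistent with the setup, in the same way that $\Omega_\sharp(R_u f)=\Omega_\sharp(f)$ entered Corollary \ref{cor:blaschke-monotone-as-1}, and I would note that it makes the monotonicity hypothesis natural, but the argument does not use it.

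The second step passes to the limit using upper semicontinuity. Since $\widetilde f_k\to f^\sharp$ in the hypo-convergence topology,
\[
F(f^\sharp)\ \ge\ \limsup_{k\to\infty}F(\widetilde f_k)=\limsup_{k\to\infty}F(f_k)\ \ge\ F(f).
\]
This proves (i).

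Part (ii) follows either by applying (i) to $-F$ (adjusting for the codomain $[0,\infty)$, or just repeating the argument) or by the symmetric chain. Decreasing monotonicity gives $F(f_k)\le F(f)$. Lower semicontinuity then gives
\[
F(f^\sharp)\le\liminf_{k\to\infty}F(\widetilde f_k)=\liminf_{k\to\infty}F(f_k)\le F(f).
\]

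The only point requiring care is the interpretation of semicontinuity with respect to hypo-convergence. For upper semicontinuity we need that hypo-convergence $g_k\to g$ in $\LC_n$ implies $\limsup_k F(g_k)\le F(g)$; for lower semicontinuity we need $\liminf_k F(g_k)\ge F(g)$. Hypo-convergence is defined for sequences and the limit $f^\sharp$ lies in $\LC_n$, so the sequential form of semicontinuity is exactly what is needed. I therefore do not expect a genuine obstacle: all analytic difficulty is absorbed into Theorem \ref{thm:Blaschke=convergence-thm}.
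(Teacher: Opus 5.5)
Your proof is correct and follows the paper's argument essentially verbatim. Both run monotonicity of $F$ along the iterated symmetrizations $f_k$ from Theorem~\ref{thm:Blaschke=convergence-thm}, use translation invariance to pass to the translates $\widetilde f_k$, and then apply upper (resp.\ lower) semicontinuity along $\widetilde f_k\to f^\sharp$. You are also right that reflection invariance is never actually used in this argument.
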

In particular,  Theorem \ref{thm:blaschke-monotone-as} is an instance of this principle, and Proposition \ref{prop:entropy-mean-blaschke-symmetral} follows from the same scheme
after establishing the required entropy continuity along the symmetrizing sequence.

\begin{proof}[Proof of Theorem \ref{thm:meta-theorem}]
We only prove (i) as the proof of (ii) is analogous with the inequalities reversed. Let $f_k=B^\sharp_{u_k}\cdots B^\sharp_{u_1}f$ be the sequence from Theorem~\ref{thm:Blaschke=convergence-thm}, and let $\widetilde{f}_k(x)=f_k(x+v_k)$ be the translated representatives hypo-converging to $f^\sharp$. Since $F$ is increasing under
Blaschke symmetrizations,
\[
F(f)\leq F(f_1)\leq F(f_2)\leq\cdots\leq F(f_k).
\]
Since $F$ is translation invariant, we have $F(f_k)=F(\widetilde f_k)$ for every $k$. Thus by the upper semicontinuity of $F$,
\[
F(f^\sharp)\ge \limsup_{k\to\infty}F(\widetilde f_k)
=\limsup_{k\to\infty}F(f_k)\geq F(f).
\]
This proves (i). The proof of (ii) is the same, using instead the  lower semicontinuity and the monotonicity assumption $F(g)\geq F(B^\sharp_u g)$.
\end{proof}

\section{Appendix}

\subsection{Proof of Lemma \ref{thm:SA-layer-cake}}
For $s>0$, set $K_s:=\lev_{\geq s} f$. By assumption, each nonempty $K_s$ is a compact convex set. We first note that, for every $t\geq 0$ and every $s>0$,
\[
\lev_{\geq s}(f\star \mathbbm{1}_{tB_2^n})=K_s+tB_2^n.
\]
Indeed,
\[
(f\star \mathbbm{1}_{tB_2^n})(x)=
\sup_{z\in tB_2^n} f(x-z)=
\sup_{y\in x-tB_2^n} f(y).
\]
Since $f$ is upper semicontinuous and $x-tB_2^n$ is compact, this supremum is attained. Hence $(f\star \mathbbm{1}_{tB_2^n})(x)\geq s$ if and only if there exists
$y\in x-tB_2^n$ such that $f(y)\geq s$, which is equivalent to
$x\in K_s+tB_2^n$. Thus, by the layer-cake formula,
\[
F(t)=\int_{\R^n}(f\star \mathbbm{1}_{tB_2^n})(x)\,dx
=\int_0^\infty \vol_n(K_s+tB_2^n)\,ds.
\]
In particular,
\[
F(t)-F(0)=\int_0^\infty
\left[\vol_n(K_s+tB_2^n)-\vol_n(K_s)\right]\,ds .
\]

Fix $t_0>0$ such that $F(t_0)<\infty$. For $f\in \LC_n$, such a $t_0$ exists; indeed, every $t_0>0$ works, because $f\star \mathbbm{1}_{t_0 B_2^n}$ is again a coercive log-concave
function, and hence is integrable. For $0<t\leq t_0$, define the difference quotient
\[
q_t(s):=\frac{\vol_n(K_s+tB_2^n)-\vol_n(K_s)}{t}.
\]
Then we have
\[
\frac{F(t)-F(0)}{t}=\int_0^\infty q_t(s)\,ds.
\]
For each fixed $s>0$, Steiner's formula yields
\[
\vol_n(K_s+tB_2^n)=\vol_n(K_s)+S(K_s)t+\sum_{j=2}^n a_j(K_s)t^j,
\]
where the coefficients $a_j(K_s)$ are nonnegative. Therefore,
\[
q_t(s)=S(K_s)+\sum_{j=2}^n a_j(K_s)t^{j-1}.
\]
It follows that, for each fixed $s$, the function $t\mapsto q_t(s)$ is nondecreasing on $(0,\infty)$ and $\lim_{t\to 0+}q_t(s)=S(K_s)$. Consequently, for every $0<t\leq t_0$, we have $0\leq q_t(s)\leq q_{t_0}(s)$.

It remains to check that $q_{t_0}$ is integrable. Since $F(t_0)<\infty$ and $F(0)\leq F(t_0)$, we have
\[
\int_0^\infty q_{t_0}(s)\,ds
=\frac{1}{t_0}\int_0^\infty\left[\vol_n(K_s+t_0B_2^n)-\vol_n(K_s)
\right]\,ds
=\frac{F(t_0)-F(0)}{t_0}<\infty.
\]
Thus, the family $\{q_t:0<t\leq t_0\}$ is dominated by the integrable function $q_{t_0}$. By the dominated convergence theorem,
\[
\lim_{t\to 0+}\frac{F(t)-F(0)}{t}
=\lim_{t\to 0+}\int_0^\infty q_t(s)\,ds
=\int_0^\infty \lim_{t\to 0+}q_t(s)\,ds
=\int_0^\infty S(K_s)\,ds.
\]
This proves that $F$ is right-differentiable at $0$ and that
\[
\left.\frac{dF}{dt}\right|_{t=0+}
=\int_0^\infty S(\lev_{\geq s}f)\,ds.
\]
Finally, if $f\in \LC_n$, then $t\cdot \mathbbm{1}_{B_2^n}=\mathbbm{1}_{tB_2^n}$. Hence, by the definition of the first variation,
\[
W_1(f)=\delta(f,\mathbbm{1}_{B_2^n})
=\lim_{t\to 0+}\frac{\int_{\mathbb R^n}(f\star \mathbbm{1}_{tB_2^n})(x)\,dx-\int_{\mathbb R^n}f(x)\,dx}{t}
=\left.\frac{dF}{dt}\right|_{t=0+}.
\]
Therefore,
\[
W_1(f)=\int_0^\infty S(\lev_{\geq s}f)\,ds.
\]\qed

\subsection{Proof of Proposition \ref{prop:proj-body-LCn}}

    Let $f\in\LC_n$. Since $h_{\Pi_\sharp^b f}$ is finite, even, and 1-homogeneous by \eqref{fn-proj-body-2}, it is the support function of an origin-symmetric convex body $\Pi_\sharp^b f$. Therefore $\Pi_\sharp f=\mathbbm{1}_{\Pi_\sharp^b f}$ 
    is an even indicator of a convex body, so $\Pi_\sharp f\in\LC_n^e$. Moreover, by definition, $\Pi_\sharp^\circ f=\exp(-h_{\Pi_\sharp^b f})$, and 
    since $h_{\Pi_\sharp^b f}$ is even, $\Pi_\sharp^\circ f$ is even as well.  It remains to show that $\Pi_\sharp^\circ f\in\LC_n$. 

    The function $h_{\Pi_\sharp^b f}=h_{\Pi\langle f\rangle}$ is convex as a support function; we show this directly. For every $x,y\in\R^n$ and $t\in[0,1]$, by the triangle inequality,
    \begin{align*}
        h_{\Pi_\sharp^b f}(tx+(1-t)y)
        &=\frac{1}{2}\int_{\R^n}|\langle z,tx+(1-t)y\rangle|\,d\mu_f(z)+\frac{1}{2}\int_{\Sp}|\langle \theta,tx+(1-t)y\rangle|\,d\nu_f(\theta)\\
        &=\frac{1}{2}\int_{\R^n}|\langle z,tx\rangle+\langle z,(1-t)y\rangle|\,d\mu_f(z)
        +\frac{1}{2}\int_{\Sp}|\langle \theta,tx\rangle+\langle\theta,(1-t)y\rangle|\,d\nu_f(\theta)\\
        &\leq \frac{1}{2}\int_{\R^n}|\langle z,tx\rangle|\,d\mu_f(z)+\frac{1}{2}\int_{\R^n}|\langle z,(1-t)y\rangle|\,d\mu_f(z)\\
        &\qquad +\frac{1}{2}\int_{\Sp}|\langle \theta,tx\rangle|\,d\nu_f(\theta)+\frac{1}{2}\int_{\Sp}|\langle\theta,(1-t)y\rangle|\,d\nu_f(\theta)\\
        &=th_{\Pi_\sharp^b f}(x)+(1-t)h_{\Pi_\sharp^b f}(y).
    \end{align*}
    This shows that $h_{\Pi_\sharp^b f}$ is convex. Thus, $\Pi_\sharp^\circ f=\exp(-h_{\Pi_\sharp^b f})$ is log-concave.

    Next, we show that $h_{\Pi_\sharp^b f}$ is continuous. Set
    \[
    h_1(x):=\frac{1}{2}\int_{\R^n}|\langle z,x\rangle|\,d\mu_f(z)\quad\text{and}\quad
    h_2(x):=\frac{1}{2}\int_{\Sp}|\langle\theta,x\rangle|\,d\nu_f(\theta),
    \]
    so that $h_{\Pi_\sharp^b f}=h_1+h_2$. For any $x,y\in\R^n$,
    \begin{align*}
        |h_1(x)-h_1(y)|
        &\leq\frac{1}{2}\int_{\R^n}\big||\langle z,x\rangle|-|\langle z,y\rangle|\big|\,d\mu_f(z)\\
        &\leq\frac{1}{2}\int_{\R^n}|\langle z,x-y\rangle|\,d\mu_f(z)\\
        &\leq \frac{1}{2}|x-y|\int_{\R^n}|z|\,d\mu_f(z),
    \end{align*}
    where in the penultimate line we used $\big||a|-|b|\big|\leq|a-b|$ for any real numbers $a,b$. Similarly,
    \begin{align*}
        |h_2(x)-h_2(y)|
        &\leq \frac{1}{2}\int_{\Sp}|\langle\theta,x-y\rangle|\,d\nu_f(\theta)\\
        &\leq\frac{1}{2}|x-y|\int_{\Sp}|\theta|\,d\nu_f(\theta)
        = \frac{1}{2}\nu_f(\Sp)|x-y|,
    \end{align*}
    since $|\theta|=1$ for $\theta\in\Sp$. Therefore,
    \begin{align*}
        |h_{\Pi_\sharp^b f}(x)-h_{\Pi_\sharp^b f}(y)|
        &\leq |h_1(x)-h_1(y)|+|h_2(x)-h_2(y)|\\
        &\leq\frac{1}{2}\left(\int_{\R^n}|z|\,d\mu_f(z)+\nu_f(\Sp)\right)|x-y|.
    \end{align*}
    Since $\mu_f$ has finite first moment and $\nu_f$ is finite, this proves that $h_{\Pi_\sharp^b f}$ is Lipschitz, hence continuous.

    We now show that $h_{\Pi_\sharp^b f}$ is coercive. We claim that $h_{\Pi_\sharp^b f}(u)>0$ for every $u\in\Sp$. Indeed, if $h_{\Pi_\sharp^b f}(u)=0$, then both nonnegative integrands in \eqref{fn-proj-body-2} vanish almost everywhere, so
    \[
    \langle z,u\rangle=0\quad \mu_f\text{-a.e.}
    \qquad\text{and}\qquad
    \langle \theta,u\rangle=0\quad \nu_f\text{-a.e.}
    \]
    Hence, $\mu_f$ and $\nu_f$ are both supported on the same hyperplane
    \[
    u^\perp=\{x\in\R^n:\,\langle x,u\rangle=0\},
    \]
    contradicting Theorem \ref{thm:rotem-falah-main}(iii). Therefore $h_{\Pi_\sharp^b f}(u)>0$ for all $u\in\Sp$. Let
    \[
    c_f:=\min_{u\in\Sp}h_{\Pi_\sharp^b f}(u).
    \]
    By the continuity of $h_{\Pi_\sharp^b f}$ and the compactness of $\Sp$, the minimum exists and is positive. Thus for every $x\in\R^n\setminus\{o\}$, by the 1-homogeneity of the support function we get
    \begin{equation}\label{eq:proj-body-supp-coercive}
        h_{\Pi_\sharp^b f}(x)=|x|\,h_{\Pi_\sharp^b f}\!\left(\frac{x}{|x|}\right)\geq c_f|x|.
    \end{equation}
  This shows that $h_{\Pi_\sharp^b f}$ is coercive. Hence,
    \[
    \Pi_\sharp^\circ f(x)=\exp(-h_{\Pi_\sharp^b f}(x))\leq \exp(-c_f|x|).
    \]
   This completes the proof  that $\Pi_\sharp^\circ f\in\LC_n^e$. \qed 

\subsection{Proof of Theorem \ref{thm:Blaschke=convergence-thm}}

We divide the proof into six steps.

\smallskip
\noindent
\emph{Step 1: The averaging operator on surface area measures.} For $u\in\Sp$, define the \emph{averaging operator}
\[
T_u(\mu,\nu):=\frac{1}{2}\bigl(\mu+(R_u)_\#\mu,\nu+(R_u)_\#\nu\bigr).
\]
By Definition~\ref{def:mainDef},
\[
(\mu_{B_u^\sharp f},\nu_{B_u^\sharp f})=T_u(\mu_f,\nu_f).
\]
Hence, if $f_k:=B_{u_k}^\sharp\cdots B_{u_1}^\sharp f$, then
\[
(\mu_{f_k},\nu_{f_k})=T_{u_k}\cdots T_{u_1}(\mu_f,\nu_f).
\]
It will be convenient to rewrite this in a group-theoretic form. For $u\in\Sp$, let
\[
\eta_u:=\frac{1}{2}(\delta_{\rm Id}+\delta_{R_u})
\]
be a probability measure on $\mathrm{O}(n)$, and for a pair $(\mu,\nu)$ set
\[
\eta_u*(\mu,\nu):=
\left(
\int_{\mathrm{O}(n)}U_\#\mu\,d\eta_u(U),
\int_{\mathrm{O}(n)}U_\#\nu\,d\eta_u(U)
\right).
\]
Then $T_u(\mu,\nu)=\eta_u*(\mu,\nu)$.

Define recursively
\[
\alpha_k:=\eta_{u_k}*\cdots *\eta_{u_1},
\]
where the convolution on the right is a convolution of probability measures on $\mathrm{O}(n)$. A straightforward induction on $k$ shows that
\begin{equation}\label{eq:alpha-representation}
(\mu_{f_k},\nu_{f_k})
=
\left(
\int_{\mathrm{O}(n)}U_\#\mu_f\,d\alpha_k(U),\;
\int_{\mathrm{O}(n)}U_\#\nu_f\,d\alpha_k(U)
\right).
\end{equation}
Indeed, for $k=1$ this is just the definition of $T_{u_1}$. If \eqref{eq:alpha-representation}
holds for $k$, then
\[
(\mu_{f_{k+1}},\nu_{f_{k+1}})
=
T_{u_{k+1}}(\mu_{f_k},\nu_{f_k})
=
\eta_{u_{k+1}}*(\mu_{f_k},\nu_{f_k}),
\]
and inserting the induction hypothesis yields the case $k+1$.

\smallskip
\noindent\emph{Step 2: Choice of the reflection sequence.} 
It is well-known that every compact connected Lie group contains a finitely generated dense subgroup (see, e.g., \cite{HofmannMorris1992}). Applying this to $\mathrm{SO}(n)$,
choose elements $A_1,\ldots,A_\ell\in\mathrm{SO}(n)$ which generate a dense subgroup of $\mathrm{SO}(n)$. By the Cartan--Dieudonn\'e theorem, each $A_i\in \mathrm{SO}(n)$ is a product of finitely many hyperplane reflections. Since $\det(A_i)=1$, each $A_i$ may be written as a product of an even number
of hyperplane reflections:
\[
A_i=R_{a_{i,1}}\cdots R_{a_{i,2m_i}}.
\]
Adjoin all of these reflections, together with one additional hyperplane reflection  $R_{a_0}$. The subgroup generated by these reflections has closure containing the dense subgroup
generated by $A_1,\ldots,A_\ell$ in $\mathrm{SO}(n)$, and also contains an orientation-reversing element. Hence its closure is all of $\mathrm{O}(n)$. Relabeling these reflections as $R_{u_1},\ldots,R_{u_m}\in \mathrm{O}(n)$, we obtain reflections whose generated subgroup is dense in $\mathrm{O}(n)$. We now extend this to an infinite sequence by periodicity:
\[
u_{qm+j}:=u_j,\qquad q\geq 0,\quad 1\leq j\leq m.
\]

For $j\in\{1,\dots,m\}$, set
\[
\eta_j:=\eta_{u_j}=\frac{1}{2}(\delta_{\rm Id}+\delta_{R_{u_j}}),
\qquad
\beta:=\eta_m*\cdots *\eta_1.
\]
The support of $\beta$ contains ${\rm Id}$ and each reflection $R_{u_j}$. Indeed, in the convolution product defining $\beta$, choosing $R_{u_j}$ in the $j$th factor and ${\rm Id}$ in all others produces
the point $R_{u_j}$, while choosing ${\rm Id}$ in all factors produces ${\rm Id}$. Hence, the closed subgroup generated by $\operatorname{supp}(\beta)$ is all of $\mathrm{O}(n)$.

We claim that $\beta$ is adapted and strictly aperiodic to apply the Kawada--It\^o theorem (see \cite{Kawada-Ito}, or \cite[Theorem 4.6.3]{Applebaum2014} for a modern account). \emph{Adaptedness} means that the closed subgroup generated by $\operatorname{supp}(\beta)$ is the whole
group, which we already proved. Recall that a probability measure on a compact group is \emph{strictly aperiodic} if its support is not contained in a proper closed left coset. Since ${\rm Id}\in\operatorname{supp}(\beta)$, if $\operatorname{supp}(\beta)\subset gH$ for some closed subgroup
$H$, then ${\rm Id}\in gH$, hence $g\in H$, so $gH=H$. Thus, $\operatorname{supp}(\beta)$ would be
contained in a proper closed subgroup, contradicting adaptedness. Therefore, $\beta$ is adapted and strictly aperiodic, so by the Kawada--It\^o theorem,
\[
\beta^{*q}\xrightarrow[q\to\infty]{}\eta
\]
weakly as probability measures on $\mathrm{O}(n)$, where $\eta$ denotes the Haar probability measure.

Now let $k=qm+r$ with $0\leq r\leq m-1$, and define
\[
\beta_r:=\eta_r*\cdots *\eta_1,
\]
with the convention $\beta_0:=\delta_{\rm Id}$. Then by the periodicity of the sequence $\{u_j\}$, we have
\[
\alpha_k=\beta_r*\beta^{*q}.
\]
Since left convolution by a fixed probability measure is weakly continuous and $\beta_r*\eta=\eta$
for every $r$ (since $\eta$ is the Haar measure), it follows that
\[
\alpha_k\xrightarrow[k\to\infty]{}\eta
\]
weakly on $\mathrm{O}(n)$.

\smallskip
\noindent
\emph{Step 3: The Haar-averaged pair is admissible.} Define
\[
\bar\mu:=\int_{\mathrm{O}(n)}U_\#\mu_f\,d\eta(U),
\qquad
\bar\nu:=\int_{\mathrm{O}(n)}U_\#\nu_f\,d\eta(U).
\]
We verify that $(\bar\mu,\bar\nu)$ satisfies the hypotheses of Theorem~\ref{thm:rotem-falah-main}. First, note that $\bar\mu\not\equiv 0$ since
\[
\bar\mu(\R^n)=\int_{\mathrm{O}(n)}\mu_f(\R^n)\,d\eta(U)=\mu_f(\R^n)
=\int_{\R^n}f(x)\,dx>0.
\]
Second, $\bar\mu$ has finite first moment:
\[
\int_{\R^n}|x|\,d\bar\mu(x)
=
\int_{\mathrm{O}(n)}\int_{\R^n}|Ux|\,d\mu_f(x)\,d\eta(U)
=
\int_{\R^n}|x|\,d\mu_f(x)
<\infty.
\]
Third, $\bar\mu+\bar\nu$ is centered. Indeed, for every $\theta\in\Sp$, since $(\mu_f,\nu_f)$ is centered we have
\begin{align*}
\int_{\R^n}\langle x,\theta\rangle\,d\bar\mu(x)
&+\int_{\Sp}\langle y,\theta\rangle\,d\bar\nu(y)
=\int_{\mathrm{O}(n)}\left[
\int_{\R^n}\langle Ux,\theta\rangle\,d\mu_f(x)
+\int_{\Sp}\langle Uy,\theta\rangle\,d\nu_f(y)
\right]d\eta(U)\\
&=\int_{\mathrm{O}(n)}
\left[\int_{\R^n}\langle x,U^{-1}\theta\rangle\,d\mu_f(x)
+\int_{\Sp}\langle y,U^{-1}\theta\rangle\,d\nu_f(y)
\right]d\eta(U)=0
\end{align*}

Fourth, $\bar\mu$ and $\bar\nu$ are not supported on a common hyperplane.
If $\bar\nu\neq 0$, then $\bar\nu$ is $\mathrm{O}(n)$-invariant, so it is a positive multiple of the spherical measure on $\Sp$, and therefore it is not supported on any hyperplane. It remains to consider the case $\bar\nu=0$. Then
\[
0=\bar\nu(\Sp)=\int_{\mathrm{O}(n)}\nu_f(\Sp)\,d\eta(U)=\nu_f(\Sp),
\]
so $\nu_f=0$. Since $(\mu_f,\nu_f)$ satisfies condition (iii) of Theorem~\ref{thm:rotem-falah-main}, $\mu_f$ is not supported on any
hyperplane.

Note that $\bar\mu$ is also $\mathrm{O}(n)$-invariant. If $\bar\mu$ were supported on a hyperplane, then by its $\mathrm{O}(n)$-invariance it would have to be supported on $\{o\}$; equivalently $\bar\mu=c\delta_o$
for some $c>0$. But reflection averaging preserves the radial pushforward, i.e.,
for every bounded Borel function $\psi:[0,\infty)\to\R$, we have 
\[
\int_{\R^n}\psi(|x|)\,d\bar\mu(x)
=\int_{\mathrm{O}(n)}\int_{\R^n}\psi(|Ux|)\,d\mu_f(x)\,d\eta(U)=\int_{\R^n}\psi(|x|)\,d\mu_f(x).
\]
Thus $|x|_\#\bar\mu=|x|_\#\mu_f$. If $\bar\mu=c\delta_o$, then $|x|_\#\mu_f=c\delta_o$, so
$\mu_f$ is supported on $\{o\}$, contradicting the fact that $\mu_f$ is not supported on any hyperplane.
Hence, $\bar\mu$ is not supported on any hyperplane. This proves the admissibility of $(\bar\mu,\bar\nu)$. Therefore, by Theorem~\ref{thm:rotem-falah-main} there exists $g\in\LC_n$, unique up to translation, such that
\begin{equation}\label{eq:g-averaged-pair}
(\mu_g,\nu_g)=(\bar\mu,\bar\nu).
\end{equation}

\smallskip
\noindent
\emph{Step 4: Cosmic convergence of the surface area pairs.} Let $\xi:\R^n\to\R$ be cosmically continuous, and define
\[
\overline{\xi}(\theta):=\lim_{\lambda\to\infty}\frac{\xi(\lambda\theta)}{\lambda},
\qquad \theta\in\Sp.
\]
Since the convergence is uniform in $\theta$, the function $\overline{\xi}$ is continuous on $\Sp$. Moreover, by uniform convergence there exists $R>0$ such that
\[
\left|\frac{\xi(r\theta)}{r}-\overline{\xi}(\theta)\right|\leq 1
\qquad\text{for all }r\geq R,\, \theta\in\Sp.
\]
Hence for $|x|=r\geq R$, we have
\[
|\xi(x)|\leq r\left(\sup_{\theta\in\Sp}|\overline{\xi}(\theta)|+1\right),
\]
and on the compact ball $RB_2^n$, the function $\xi$ is bounded. Thus, there exists $C_\xi>0$ such that
\[
|\xi(x)|\leq C_\xi(1+|x|)
\qquad\text{for all }x\in\R^n.
\]
Now define
\[
F_\xi(U):=
\int_{\R^n}\xi(Ux)\,d\mu_f(x)
+\int_{\Sp}\overline{\xi}(U\theta)\,d\nu_f(\theta),\qquad U\in\mathrm{O}(n).
\]
This is well-defined because $\mu_f$ has finite first moment and $\nu_f$ is finite.
It is also continuous in $U$. Indeed, if $U_j\to U$ in $\mathrm{O}(n)$, then $\xi(U_j x)\to\xi(Ux)$ for every $x$, and $\overline{\xi}(U_j\theta)\to\overline{\xi}(U\theta)$ for every $\theta$; the dominating
functions $C_\xi(1+|x|)$ and $\sup_{\eta\in\Sp}|\overline{\xi}(\eta)|$ will allow us to apply the dominated convergence theorem.

Using \eqref{eq:alpha-representation}, we obtain
\begin{align*}
\int_{\R^n}\xi\,d\mu_{f_k}+\int_{\Sp}\overline{\xi}\,d\nu_{f_k}
&=
\int_{\mathrm{O}(n)}F_\xi(U)\,d\alpha_k(U).
\end{align*}
Since $\alpha_k\to\eta$ weakly and $F_\xi$ is continuous and bounded on the compact group $\mathrm{O}(n)$, we deduce that
\begin{align*}
\int_{\R^n}\xi\,d\mu_{f_k}+\int_{\Sp}\overline{\xi}\,d\nu_{f_k}
&\longrightarrow
\int_{\mathrm{O}(n)}F_\xi(U)\,d\eta(U)
=\int_{\R^n}\xi\,d\bar\mu+\int_{\Sp}\overline{\xi}\,d\bar\nu.
\end{align*}
By the definition of cosmic convergence (see \cite[Definition 1.8]{Falah-Rotem}), this means that $(\mu_{f_k},\nu_{f_k})\to (\bar\mu,\bar\nu)$ cosmically.

\smallskip
\noindent
\emph{Step 5: Convergence up to translation.} By \eqref{eq:g-averaged-pair}, the limiting pair of surface area measures is $(\mu_g,\nu_g)$. Hence $(\mu_{f_k},\nu_{f_k})\to(\mu_g,\nu_g)$ cosmically. By \cite[Theorem 1.10]{Falah-Rotem}, there exist translations $v_k\in\R^n$ such that $\widetilde{f}_k(x):=f_k(x+v_k)$ converges to $g$ in $\LC_n$. Writing $\widetilde{f}_k=e^{-\phi_k}$ and $g=e^{-\phi}$, this means that $\phi_k$ epi-converges to $\phi$. Since the function $t\mapsto e^{-t}$ is continuous and strictly decreasing, this is equivalent to the hypo-convergence of $\widetilde f_k$ to $g$.

\smallskip
\noindent
\emph{Step 6: The limit can be chosen to be radial.} For every $U\in\mathrm{O}(n)$, the pair $(\bar\mu,\bar\nu)$ is $U$-invariant, so
\[
(\mu_{g\circ U^{-1}},\nu_{g\circ U^{-1}})
=(U_\#\bar\mu,U_\#\bar\nu)
=(\bar\mu,\bar\nu)
=(\mu_g,\nu_g).
\]
By the uniqueness in Theorem~\ref{thm:rotem-falah-main}, there exists $a_U\in\R^n$ such that
\[
\forall x\in\R^n,\quad g(U^{-1}x)=g(x+a_U).
\]
Choose $t\in(0,\|g\|_\infty)$. Since $g\in\LC_n$, the superlevel set $K_t:=\lev_{\geq t}g$ is a nonempty compact convex body. The identity above implies that $UK_t = K_t-a_U$. Let $s(K_t)$ denote the Steiner point of $K_t$. Using the covariance of the Steiner point, $s(UK_t)=Us(K_t)$ and $s(K_t-a_U)=s(K_t)-a_U$, so we obtain $Us(K_t)=s(K_t)-a_U$. Thus, with $b:=s(K_t)$, we get $a_U=b-Ub$. Now define $f^\sharp(x):=g(x+b)$. 
Then for every $U\in\mathrm{O}(n)$ and every $x\in\R^n$, we have
\begin{align*}
f^\sharp(U^{-1}x)
&=g(U^{-1}x+b) =g(U^{-1}(x+Ub))
 =g(x+Ub+a_U)=g(x+b)
 =f^\sharp(x),
\end{align*}
since $a_U=b-Ub$. Hence, $f^\sharp$ is radial.

Finally, replacing each vector $v_k$ by $v_k+b$, the sequence $\widetilde{f}_k(x)=f_k(x+v_k+b)$
 hypo-converges to $f^\sharp$. Since translations do not change the surface area pair, $f^\sharp$ satisfies
\[
(\mu_{f^\sharp},\nu_{f^\sharp})
=(\bar\mu,\bar\nu)
=\left(
\int_{\mathrm{O}(n)}U_\#\mu_f\,d\eta(U),\;
\int_{\mathrm{O}(n)}U_\#\nu_f\,d\eta(U)
\right).
\]
This completes the proof. \qed
\subsection{Proof of Lemma \ref{lem:as-hypo-symmetral}}

Let $f\in\LC_n$. Recall that $f^\sharp$ is radial, i.e., for every $\vartheta\in\mathrm{O}(n)$ we have $f^\sharp\circ\vartheta=f^\sharp$. Using  Proposition \ref{prop:affine-sa}(i) with $\det(\vartheta)=\pm 1$, we get that for all $Q\in\mathcal{S}_c^n$ and all $\vartheta\in\mathrm{O}(n)$,
\[
\delta(f^\sharp,\mathbbm{1}_{Q^\circ})=\delta(f^\sharp\circ\vartheta,\mathbbm{1}_{Q^\circ})=\delta(f^\sharp,\mathbbm{1}_{\vartheta Q^\circ}).
\]
Averaging over $\mathrm{O}(n)$ and applying Theorem \ref{thm:rotem-falah-variation} with $g=\mathbbm{1}_{\vartheta Q^\circ}$ (so that $h_g=h_{\vartheta Q^\circ}$ and $\supp(g)=\vartheta Q^\circ$), we obtain 
\begin{align*}
\delta(f^\sharp,\mathbbm{1}_{Q^\circ})&=\int_{\mathrm{O}(n)}\delta(f^\sharp,\mathbbm{1}_{\vartheta Q^\circ})\,d\eta(\vartheta)\\
&=\int_{\mathrm{O}(n)}\left(\int_{\R^n}h_{\vartheta Q^\circ}(x)\,d\mu_{f^\sharp}(x)+\int_{\Sp}h_{\vartheta Q^\circ}(u)\,d\nu_{f^\sharp}(u)\right)d\eta(\vartheta)\\
&= \int_{\R^n}\left(\int_{\mathrm{O}(n)}h_{\vartheta Q^\circ}(x)\,d\eta(\vartheta)\right)d\mu_{f^\sharp}(x)+\int_{\Sp}\left(\int_{\mathrm{O}(n)}h_{\vartheta Q^\circ}(u)\,d\eta(\vartheta)\right)d\nu_{f^\sharp}(u).
\end{align*} 
In the last line, we interchanged the order of integration using Tonelli's theorem. Let $x\in\R^n$. Since $h_{\vartheta Q^\circ}(x)=h_{Q^\circ}(\vartheta^\top x)$, averaging over $\vartheta\in\mathrm{O}(n)$ gives us
\[
\int_{\mathrm{O}(n)}h_{Q^\circ}(\vartheta^\top x)\,d\eta(\vartheta)=\frac{|x|}{n\omega_n}\int_{\Sp}h_{Q^\circ}(v)\,d\sigma(v),
\]
because $\vartheta^\top(x/|x|)$ is uniformly distributed on $\Sp$. Likewise, for $u\in\Sp$,
\[
\int_{\mathrm{O}(n)}h_{Q^\circ}(\vartheta^\top u)\,d\eta(\vartheta)=\frac{1}{n\omega_n}\int_{\Sp}h_{Q^\circ}(v)\,d\sigma(v).
\]

Substituting these integrals back into the last expression for $\delta(f^\sharp,\mathbbm{1}_{Q^\circ})$, we obtain
\begin{equation}\label{eq:first-variation-mean-symmetral}
\begin{split}
   &\delta(f^\sharp,\mathbbm{1}_{Q^\circ})\\&=  \int_{\R^n}\left(\frac{|x|}{n\omega_n}\int_{\Sp}h_{Q^\circ}(v)\,d\sigma(v)\right)d\mu_{f^\sharp}(x)+\int_{\Sp}\left(\frac{1}{n\omega_n}\int_{\Sp}h_{Q^\circ}(w)\,d\sigma(w)\right)d\nu_{f^\sharp}(w)\\
  &=\frac{1}{n\omega_n}\left(\int_{\R^n}|x|\,d\mu_{f^\sharp}(x)+\nu_{f^\sharp}(\Sp)\right)\left(\int_{\Sp}h_{Q^\circ}(v)\,d\sigma(v)\right)\\
  &=\frac{W_1(f^\sharp)}{n\omega_n}\int_{\Sp}h_{Q^\circ}(v)\,d\sigma(v), 
\end{split}  
\end{equation}
where the last line follows from Remark \ref{rmk:SA}.

Next, we apply H\"older's inequality to the functions $a(v):=h_{Q^\circ}(v)^{\frac{n}{n+1}}$ and $b(v):=h_{Q^\circ}(v)^{-\frac{n}{n+1}}$ with $p=\frac{n+1}{n}>1$ and $q=n+1>1$, so that $\frac{1}{p}+\frac{1}{q}=1$:
\begin{align*}
&\left(\int_{\Sp}a(v)^{\frac{n+1}{n}}\,d\sigma(v)\right)^{\frac{n}{n+1}}\left(\int_{\Sp}b(v)^{n+1}\,d\sigma(v)\right)^{\frac{1}{n+1}} \\
&\geq \int_{\Sp}a(v)b(v)\,d\sigma(v)=\int_{\Sp}1\,d\sigma(v)=\sigma(\Sp)=n\omega_n.
\end{align*}
In other words,
\begin{equation}\label{eq:holder-main}
    \left(\int_{\Sp}h_{Q^\circ}(v)\,d\sigma(v)\right)^{\frac{n}{n+1}}\left(\int_{\Sp}h_{Q^\circ}(v)^{-n}\,d\sigma(v)\right)^{\frac{1}{n+1}} \geq n\omega_n.
\end{equation}
Since $h_{Q^\circ}^{-1}=\rho_Q$, using polar coordinates we get $\int_{\Sp}h_{Q^\circ}(v)^{-n}\,d\sigma(v)=n\vol_n(Q)$. Hence, 
\begin{equation}\label{eq:Holder-2}
\forall Q\in\mathcal{S}_c^n,\quad     (n\vol_n(Q))^{\frac{1}{n}}\int_{\Sp}h_{Q^\circ}(v)\,d\sigma(v)\geq (n\omega_n)^{\frac{n+1}{n}}.
\end{equation}
Therefore, substituting \eqref{eq:Holder-2} into \eqref{eq:first-variation-mean-symmetral} and using Proposition \ref{prop:hypo-symmetral-preserves-SA},  we deduce that for all $Q\in\mathcal{S}_c^n$,
\begin{align*}
\delta(f^\sharp,\mathbbm{1}_{Q^\circ}) &=\frac{W_1(f^\sharp)}{n\omega_n}\cdot\frac{(n\vol_n(Q))^{\frac{1}{n}}}{n^{\frac{1}{n}}}\int_{\Sp}h_{Q^\circ}(v)\,d\sigma(v)\geq  W_1(f^\sharp)\omega_n^{\frac{1}{n}}=W_1(f)\omega_n^{\frac{1}{n}}.
\end{align*}

Finally, we analyze the equality conditions. The only inequality in the entire proof is \eqref{eq:holder-main}, where we used H\"older's inequality. Thus, equality holds if and only if $h_{Q^\circ}$ is a constant almost everywhere on the sphere $\Sp$ (hence everywhere on $\Sp$ by the continuity of $h_{Q^\circ}$). Therefore, $Q^\circ$ is a Euclidean ball, so $Q=(Q^\circ)^\circ$ is also a Euclidean ball. Conversely, if $Q=rB_2^n$ for some $r>0$, then $Q^\circ=r^{-1}B_2^n$, so  $\delta(f^\sharp,\mathbbm{1}_{Q^\circ})=r^{-1}W_1(f^\sharp)$ and $\vol_n(Q)=r\omega_n^{\frac{1}{n}}$. Hence $\delta(f^\sharp,\mathbbm{1}_{Q^\circ})\vol_n(Q)^{\frac{1}{n}}=W_1(f^\sharp)\omega_n^{\frac{1}{n}}$, so equality holds. Thus,
\[
\inf_{Q\in\mathcal{S}_c^n}\delta(f^\sharp,\mathbbm{1}_{Q^\circ})\vol_n(Q)^{\frac{1}{n}}=\omega_n^{\frac{1}{n}}W_1(f^\sharp)=\omega_n^{\frac{1}{n}}W_1(f)
\]
with minimizers $Q=rB_2^n$. Therefore,
\[
\Omega_\sharp(f^\sharp)=n^{\frac{1}{n+1}}\left(\omega^{\frac{1}{n}}W_1(f^\sharp)\right)^{\frac{n}{n+1}}=(n\omega_n)^{\frac{1}{n+1}}W_1(f^\sharp)^{\frac{n}{n+1}}=(n\omega_n)^{\frac{1}{n+1}}W_1(f)^{\frac{n}{n+1}}
\]
where we again used Proposition \ref{prop:hypo-symmetral-preserves-SA}. This completes the proof. \qed
\section*{Acknowledgments}

We would like to thank Michael Roysdon for the discussions related to this paper.
\bibliographystyle{plain}
\bibliography{main}


\vspace{3mm}

\noindent {\sc Department of Mathematics \& Computer Science, Longwood University, Farmville, Virginia 23909}

\noindent {\it E-mail address:} {\tt hoehnersd@longwood.edu}

\bigskip

\noindent {\sc Department of Mathematics, Applied Mathematics and Statistics, Case Western Reserve University, Cleveland, Ohio 44106}

\noindent {\it E-mail address:} {\tt exc583@case.edu}

\end{document}